\tikzset{>=latex}
\theoremstyle{plain}
\newtheorem{theorem}{Theorem}[section]
\newtheorem{lemma}[theorem]{Lemma}
\newtheorem{proposition}[theorem]{Proposition}
\newtheorem{corollary}[theorem]{Corollary}
\theoremstyle{definition}
\newtheorem{definition}[theorem]{Definition}
\newtheorem{remark}[theorem]{Remark}
\newtheorem{notation}[theorem]{Notation}
\DeclareMathOperator{\dv}{div}
\DeclareMathOperator{\Tr}{Tr}
\DeclareMathOperator{\Hess}{Hess}
\DeclareMathOperator{\Ric}{Ric}
\DeclareMathOperator{\diam}{diam}
\DeclareMathOperator{\df}{def}
\newcommand\region{U}
\newcommand\ds{ds}
\newcommand\distcurv{(\kappa\omega)_{\rm dist}}
\DeclareMathOperator\End{End}
\newcommand\A{\mathcal A}
\newcommand\dd[2][]{\frac{d{#1}}{d{#2}}}
\newcommand{\textoverline}[1]{$\overline{\mbox{#1}}$}
\colorlet{dgreen}{black}
\colorlet{blue}{black}
\begin{document}

\title{Finite element approximation of the Levi-Civita connection and its curvature in two dimensions}

\author{Yakov Berchenko-Kogan\thanks{\noindent Department of Mathematics, Pennsylvania State University, \href{yashabk@psu.edu}{yashabk@psu.edu}} \and Evan S. Gawlik\thanks{\noindent Department of Mathematics, University of Hawai`i at M\textoverline{a}noa, \href{egawlik@hawaii.edu}{egawlik@hawaii.edu}}}

\date{}

\maketitle

\begin{abstract}
We construct finite element approximations of the Levi-Civita connection and its curvature on triangulations of oriented two-dimensional manifolds.  Our construction relies on the Regge finite elements, which are piecewise polynomial symmetric $(0,2)$-tensor fields possessing single-valued tangential-tangential components along element interfaces.  When used to discretize the Riemannian metric tensor, these piecewise polynomial tensor fields do not possess enough regularity to define connections and curvature in the classical sense, but we show how to make sense of these quantities in a distributional sense.  We then show that these distributional quantities converge in certain dual Sobolev norms to their smooth counterparts under refinement of the triangulation.  We also discuss projections of the distributional curvature and distributional connection onto piecewise polynomial finite element spaces.  We show that the relevant projection operators commute with certain linearized differential operators, yielding a commutative diagram of differential complexes. 
\end{abstract}

\section{Introduction} \label{sec:intro}

The finite element method is used ubiquitously to approximate solutions to partial differential equations in Euclidean space, but it sees relatively limited use in Riemannian geometry.  The goal of this paper is to lay down foundations for computing two quantities of interest in Riemannian geometry---the Levi-Civita connection and its curvature---with finite elements.  We focus on the two-dimensional setting.  

Our construction relies on the Regge finite elements, which are a recently developed family of finite elements for discretizing symmetric $(0,2)$-tensor fields on simplicial triangulations~\cite{li2018regge,christiansen2011linearization,regge1961general}.  When used to discretize the Riemannian metric tensor, these piecewise polynomial tensor fields do not possess enough regularity to define connections and curvature in the classical sense.  We show in this paper how to make sense of these quantities in a distributional sense.  Importantly, these distributional quantities converge to their smooth counterparts under refinement of the triangulation in a sense that we make precise in Section~\ref{sec:conv}.  The rates of convergence depend on the polynomial degree of the approximate metric tensor, with higher polynomial degrees yielding higher rates of convergence.

To be more concrete, let us briefly describe the Regge finite elements~\cite{li2018regge,christiansen2011linearization,regge1961general}.  Given a triangulation $\mathcal{S}$ of an oriented manifold of dimension $d$, the lengths of all of the edges in $\mathcal{S}$ determine a piecewise constant Riemannian metric $g$ on $\mathcal{S}$.  This metric automatically possesses the following continuity property: $g$ has single-valued tangential-tangential components on every $(d-1)$-dimensional simplex in $\mathcal{S}$.  The metric $g$ is an example of a tensor field belonging to the lowest-order Regge finite element space.  More generally, for an integer $r \ge 0$, the Regge finite element space of order $r$ consists of symmetric $(0,2)$-tensor fields on $\mathcal{S}$ that are piecewise polynomial of degree at most $r$ and obey the same tangential-tangential continuity constraint as above.  Often the integer $r$ is unimportant, and we will simply be concerned with the space of piecewise smooth symmetric $(0,2)$-tensor fields with tangential-tangential continuity across $(d-1)$-dimensional faces.  We call elements of this space \emph{Regge metrics} if they are positive definite everywhere. 

Obviously, the scalar curvature of a Regge metric $g$ is not well-defined in the classical sense, unless attention is restricted to the interior of a $d$-simplex in $\mathcal{S}$.  However, there is a natural way to interpret the scalar curvature (more precisely, the scalar curvature times the volume form) of $g$ in a distributional sense when $g$ is piecewise constant.  One considers a linear combination of Dirac delta distributions supported on $(d-2)$-simplices $z$, each weighted by the \emph{angle defect} at $z$ times the volume of $z$.  The angle defect measures the failure of the dihedral angles incident at $z$ to sum to $2\pi$.  This definition of scalar curvature was posited by Regge in his discrete theory of relativity~\cite{regge1961general} and has since been given various justifications~\cite{cheeger1984curvature,christiansen2011linearization,christiansen2013exact}.

The first aim of this paper is to study a generalization of Regge's definition of scalar curvature to piecewise polynomial Regge metrics in dimension $d=2$.  It turns out that the appropriate generalization is a distribution with three contributions: the scalar curvature within each triangle, the jump in the geodesic curvature across each edge, and the angle defect at each vertex.  Such a definition has been mentioned in the discrete differential geometry {\color{blue}and geometric analysis} literature (see for instance~\cite[p. 6]{sullivan2008curvatures} {\color{blue}and~\cite[Corollary 3.1]{strichartz2020defining}}), but, to our knowledge, no efforts have been made to understand its convergence until now, {\color{blue}and it does not appear to have been mentioned in~\cite{li2018regge}.}

The second aim of this paper is to give meaning to the Levi-Civita connection associated with a Regge metric $g$.  We again restrict our attention to dimension $d=2$, where it is possible to encode the Levi-Civita connection locally with a scalar-valued one-form.  We construct such a one-form {\color{blue}using certain rotation angles associated with parallel transport across edges in the triangulation}.  Our construction leads to a distributional one-form $\Gamma_{\rm dist}$ whose distributional exterior \emph{co}derivative is equal to the distributional curvature of $g$ discussed above.  As such, this one-form can be regarded as a distributional version of the Hodge star of the corresponding connection one-form from the smooth setting.  When $g$ is piecewise constant, $\Gamma_{\rm dist}$ is a distribution supported on edges.  This aligns with a common viewpoint in discrete exterior calculus~\cite{hirani2003discrete,desbrun2005discrete}, where discrete connections on two-dimensional triangulations are often regarded as discrete dual one-forms~\cite{leok2005discrete,crane2010trivial}; the discrete Hodge star of such a discrete dual one-form is naturally associated with (primal) edges of the triangulation, just as our distributional connection is.  In fact, relative to an appropriate orthonormal frame, the distributional connection one-form that we construct encodes a parallel transport operator with the following properties. Along any curve $\mathcal{C}$ that lies entirely in the interior of a triangle, the parallel transport operator along $\mathcal{C}$ coincides with the smooth Levi-Civita parallel transport operator.  If $\mathcal{C}$ crosses an edge, then the tangential and normal components of any vector are preserved during parallel transport across the edge.  This is a widely used notion of parallel transport on triangulations~\cite{leok2005discrete,crane2010trivial,li2018regge,christiansen2011linearization}.

It turns out that a great deal of information about a Regge metric's distributional curvature and distributional connection can be gleaned from studying their evolution under deformations of the metric.  For one thing, doing so allows us to show that the distributional curvature operator described above is (infinitesimally) consistent; its linearization around a given Regge metric $g$ is precisely the linearized curvature operator, interpreted in a distributional sense.  Christiansen~\cite[Proposition 2]{christiansen2011linearization} showed this in the special case where $g$ is Euclidean, and the second author showed this when $g$ is a piecewise constant Regge metric~\cite[Lemma 3.3]{gawlik2019high}.  The present paper shows this for arbitrary Regge metrics $g$.  The calculation is more involved than in the former papers, since, among other things, one must linearize the jumps in the geodesic curvature across edges of the triangulation and make use of several non-Euclidean integration by parts identities. 

The result of this calculation becomes especially illuminating when expressed in finite element parlance.  As we show in Theorem~\ref{thm:main}, the formula for the linearization of the distributional curvature of a Regge metric $g$ is expressible in terms of a bilinear form that appears in the Hellan-Herrmann-Johnson (HHJ) finite element method~\cite{babuska1980analysis,arnold1985mixed,brezzi1977mixed,braess2018two,braess2019equilibration,arnold2020hellan,pechstein2017tdnns,chen2018multigrid}.  It is not the familiar Euclidean version of this bilinear form, but rather one obtained by replacing the Euclidean metric by $g$.  

This link with the HHJ method plays a central role in our analysis.  It reveals that the second author's prior work~\cite{gawlik2019high} on curvature approximation---where an integral of the HHJ bilinear form was used to \emph{define} the curvature of piecewise polynomial Regge metrics---is directly applicable to our setting, because the approximate curvature defined there is (somewhat fortuitously) equivalent to the one studied here.  This allows us to leverage the analysis in~\cite{gawlik2019high} to deduce the convergence of the distributional curvature under refinement.  In this analysis, the evolution of geometric quantities under metric deformations plays a key role.  Roughly speaking, to bound the error in the curvature approximation, one studies the evolution of the error along a one-parameter family of Regge metrics emanating from the Euclidean metric, where the error is zero.

Although we have chosen to focus on defining distributional connections and distributional curvature in this paper, it is worth noting that both the distributional curvature and the distributional connection can, if desired, be projected onto finite element spaces.   Doing so produces piecewise polynomial quantities that are computable using standard finite element assembly routines.  As we show in Section~\ref{sec:poly}, appropriate finite element spaces to use for the curvature and connection (when the Regge metric $g$ is piecewise polynomial of degree at most $r$) are the spaces $\mathcal{P}_{r+1}\Lambda^0$ and $\mathcal{P}_{r+1}^-\Lambda^1$ from finite element exterior calculus~\cite{arnold2006finite,arnold2010finite}.  These choices, which correspond to continuous Lagrange finite elements and two-dimensional N\'{e}d\'{e}lec finite elements of the first kind, respectively, are guided by commutative diagrams of differential complexes; see Section~\ref{sec:poly}.

Note that elsewhere in the literature, one can find notions of discrete connections and curvature on simplicial triangulations that differ from ours in important ways.
For example,~\cite{berwick2021discrete} and~\cite{christiansen2012simplicial} associate a parallel transport map to each (primal) edge in the triangulation, which is interpreted as a map between vector spaces anchored at vertices.  As such, it leads to a notion of curvature that is associated with triangles rather than $(d-2)$-simplices.  Another approach~\cite{christiansen2019finite} associates a parallel transport map to every pair of simplices for which one member of the pair is a codimension-1 subsimplex of the other, leading to a notion of curvature that is associated with elements of the cubical refinement of the triangulation.  Similarly,~\cite{liu2016discrete} associates (in dimension $d=2$) a parallel transport map to every pair of incident simplices of arbitrary dimension, leading to a notion of curvature associated with triangles.  In contrast, the viewpoint we adopt in this paper is more closely aligned with the discrete exterior calculus viewpoint in~\cite{leok2005discrete,crane2010trivial} and with the classical viewpoint that curvature is concentrated on $(d-2)$-simplices in the piecewise flat setting.

This paper is organized as follows.  We start by deriving formulas for the evolution of various geometric quantities under deformations of the metric in Section~\ref{sec:evolution}.  There, the focus is on smooth Riemannian metrics.  We then turn our attention toward Regge metrics in Section~\ref{sec:distcurv} and define the distributional curvature of a Regge metric.  We use the results of Section~\ref{sec:evolution} to compute the linearization of the distributional curvature in Section~\ref{sec:linearization}.  The formula for the linearization, together with the calculations from Section~\ref{sec:evolution}, {\color{blue}play a role in Section~\ref{sec:distconn}, where we introduce and study the properties of the distributional Levi-Civita connection.}  We study the convergence of the distributional curvature and distributional connection to their smooth counterparts under refinement in Section~\ref{sec:conv}.  We discuss projections of the distributional curvature and distributional connection onto piecewise polynomial finite element spaces in Section~\ref{sec:poly}.  We show there that the relevant projection operators commute with certain linearized differential operators, yielding a commutative diagram of differential complexes.

\section{Evolution of geometric quantities} \label{sec:evolution}

\paragraph{Notation.}

Let $M$ be a smooth oriented manifold of dimension $d$.  We use $TM$ and $T^*M$ to denote the tangent and cotangent bundles of $M$, respectively.  The set of differential $k$-forms on $M$ is denoted $\Lambda^k(T^*M)$, and the endomorphism bundle of $TM$ is denoted $\End(TM)$.

Let $g$ be a smooth Riemannian metric on $M$.
We use $\omega$ to denote the volume form on $M$ induced by $g$.  The Levi-Civita connection associated with $g$ is denoted $\nabla$.  Thus, if $\sigma$ is a $(p,q)$-tensor field, then the covariant derivative of $\sigma$ is the $(p,q+1)$-tensor field $\nabla \sigma$, and the covariant derivative of $\sigma$ along a vector field $X$ is the $(p,q)$-tensor field $\nabla_X \sigma$.  We use $\Tr \sigma$ to denote the contraction of $\sigma$ along the first two indices, using $g$ to raise or lower indices as needed.  We write $\dv \sigma = \Tr \nabla\sigma$ and $\Delta \sigma = \dv \nabla \sigma$.  The Riemannian Hessian of a scalar field $f$ is denoted $\Hess f = \nabla \nabla f$.

The pointwise inner product of two $(p,q)$-tensor fields $\sigma$ and $\rho$ with respect to $g$ is denoted $\langle \sigma, \rho \rangle_g$.  Their $L^2$-inner product over $M$ is $\langle \sigma, \rho \rangle_{g,M} = \int_M \langle \sigma, \rho \rangle_g \, \omega$.  Sometimes we suppress the subscript $g$ when the metric is clear from the context.

The Lie derivative of a $(p,q)$-tensor field $\sigma$ along a vector field $X$ is denoted $\mathcal{L}_X \sigma$.  If $X$ and $Y$ are two vector fields, then we denote their Lie bracket by $[X,Y] = \mathcal{L}_X Y$.  We also use $[u,v]$ to denote the commutator $uv-vu$ of two endorphisms $u$ and $v$, which we interpret pointwise if $u$ and $v$ vary spatially.

If $\alpha$ is a differential $k$-form, then its exterior derivative is the $(k+1)$-form $d\alpha$, its Hodge star is the $(d-k)$-form $\star \alpha$, its exterior coderivative is the $(k-1)$-form $d^* \alpha = (-1)^k \star^{-1} d \star \alpha$, and its contraction along a vector field $X$ is the $(k-1)$-form $i_X \alpha$.  The wedge product of two differential forms $\alpha$ and $\beta$ is denoted $\alpha \wedge \beta$.

In addition to using $i_X$ to denote the contraction along $X$, we use the letter $i$ for another purpose.  If $N$ is a submanifold of $M$, then $i_{M,N}$ denotes the inclusion $N \hookrightarrow M$, and $i_{M,N}^*$ denotes the pullback under this inclusion.

We use $\sharp$ and $\flat$ to denote the musical isomorphisms sending one-forms to vector fields and vice versa.  If $f$ is a scalar field, then its covariant derivative $\nabla f$ coincides with the one-form $df$, but we will frequently abuse notation by interpreting $\nabla f$ as either $df$ or $(df)^\sharp$ depending on the context.

We make occasional use of index notation to do calculations in coordinates.  We always follow the Einstein summation convention.  Thus, $\nabla_X f = X^i \nabla_i f = X_i \nabla^i f$, $\Delta f = \nabla^i \nabla_i f = \nabla_i \nabla^i f$, etc.

We use the letter $\delta$ to denote the Euclidean metric.

In our analysis, an important role will be played by the operator $S$ which sends a symmetric $(0,2)$-tensor field $\sigma$ to the symmetric $(0,2)$-tensor field
\[
S\sigma = \sigma - g \Tr\sigma.
\]

From this point forward, we restrict our attention to dimension $d=2$.

\paragraph{Outline.}

The goal of this section is to understand how various quantities associated with the metric $g$, like the curvature and Levi-Civita connection, evolve with time if $g$ is time-dependent.  Thus, we consider an evolving Riemannian metric $g(t)$ with time derivative 
\[
\sigma = \frac{\partial}{\partial t}g,
\] 
and we express all of our results in terms of $\sigma$.
We use dots to denote differentiation with respect to $t$.

We calculate the evolution of four quantities, working throughout in dimension $d=2$: the Gaussian curvature (Section~\ref{sec:kappavoldot}), the Levi-Civita connection (Section~\ref{sec:conndot}), the geodesic curvature of a curve in $M$ (Section~\ref{sec:klengthdot}), and the angle between two curves in $M$ (Section~\ref{sec:angledot}).  The results of these calculations will be used extensively when we study Regge metrics in Sections~\ref{sec:distcurv}-\ref{sec:poly}.

\subsection{Gaussian curvature evolution} \label{sec:kappavoldot}

We first study the evolution of the curvature two-form: the Gaussian curvature $\kappa$ (which is half the scalar curvature $R$) times the volume form $\omega$.  

\begin{proposition} \label{prop:kappavoldot}
If $g(t)$ is an evolving Riemannian metric on $M$ with time derivative $\sigma = \frac{\partial}{\partial t}g$, then the curvature 2-form $\kappa \, \omega$ satisfies
\begin{equation} \label{kappavoldot}
\frac{\partial}{\partial t} \left( \kappa \, \omega \right) = \frac{1}{2} (\dv \dv S \sigma) \, \omega,
\end{equation}
where $S\sigma = \sigma - g \Tr \sigma$.
\end{proposition}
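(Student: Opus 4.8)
The plan is to differentiate the product $\kappa\,\omega$ using the two classical first-variation formulas on a surface: the variation of the Riemannian volume form, $\frac{\partial}{\partial t}\omega = \tfrac12(\Tr\sigma)\,\omega$, and the variation of the scalar curvature $R = 2\kappa$. Writing
$\frac{\partial}{\partial t}(\kappa\omega) = \bigl(\tfrac{\partial}{\partial t}\kappa\bigr)\omega + \kappa\bigl(\tfrac{\partial}{\partial t}\omega\bigr) = \tfrac12\bigl(\tfrac{\partial}{\partial t}R\bigr)\,\omega + \tfrac12\kappa(\Tr\sigma)\,\omega$,
everything reduces to computing $\frac{\partial}{\partial t}R$ and then performing a short tensor-algebra simplification special to $d=2$. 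The variation of $\omega$ I would obtain from $\omega = \sqrt{\det g}$ in local coordinates together with $\frac{\partial}{\partial t}\log\det g = \Tr\sigma$.

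For $\frac{\partial}{\partial t}R$ I would first record that the variation of the Christoffel symbols, $\frac{\partial}{\partial t}\Gamma^k_{ij} = \tfrac12 g^{kl}(\nabla_i\sigma_{jl}+\nabla_j\sigma_{il}-\nabla_l\sigma_{ij})$, is itself a tensor, so that the variation of the Ricci tensor is the tensorial expression $\frac{\partial}{\partial t}R_{ij} = \nabla_k\bigl(\tfrac{\partial}{\partial t}\Gamma^k_{ij}\bigr) - \nabla_i\bigl(\tfrac{\partial}{\partial t}\Gamma^k_{kj}\bigr)$. Contracting with $g^{ij}$, using $\frac{\partial}{\partial t}g^{ij} = -\sigma^{ij}$ and $\nabla g = 0$, and collecting terms (noting which pairs among the three terms cancel under each contraction) yields the standard linearized scalar curvature formula $\frac{\partial}{\partial t}R = \dv\dv\sigma - \Delta\Tr\sigma - \langle\sigma,\Ric\rangle_g$. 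The only place dimension two enters is the identity $\Ric = \kappa g$ valid on surfaces, which gives $\langle\sigma,\Ric\rangle_g = \kappa\Tr\sigma$; substituting, the $\tfrac12\kappa\Tr\sigma$ term cancels exactly against the $\kappa\bigl(\tfrac{\partial}{\partial t}\omega\bigr)$ contribution, leaving $\frac{\partial}{\partial t}(\kappa\omega) = \tfrac12(\dv\dv\sigma - \Delta\Tr\sigma)\,\omega$.

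It then remains to recognize the right-hand side as $\tfrac12(\dv\dv S\sigma)\,\omega$. Since $\nabla g = 0$, one has $\dv\dv(g\Tr\sigma) = \nabla^i\nabla^j(g_{ij}\Tr\sigma) = \nabla^i\nabla_i\Tr\sigma = \Delta\Tr\sigma$, so $\dv\dv S\sigma = \dv\dv(\sigma - g\Tr\sigma) = \dv\dv\sigma - \Delta\Tr\sigma$, and the identity $\frac{\partial}{\partial t}(\kappa\omega)=\tfrac12(\dv\dv S\sigma)\,\omega$ follows. The one genuinely laborious step is the derivation of the linearized scalar curvature formula — the bookkeeping of the three terms of $\frac{\partial}{\partial t}\Gamma^k_{ij}$ under the two contractions; everything else (the variation of $\omega$, the $d=2$ identity $\Ric=\kappa g$, and the final divergence rearrangement) is immediate. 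If one prefers not to reprove the Lichnerowicz-type formula, it may simply be quoted, in which case the proposition follows in a couple of lines.
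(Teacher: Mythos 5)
Your proposal is correct and follows essentially the same route as the paper's proof: quote (or rederive) the linearized scalar curvature formula $\dot{R} = \dv\dv\sigma - \Delta\Tr\sigma - \langle\sigma,\Ric\rangle$, use $\Ric = \kappa g$ in dimension two so that the $\kappa\Tr\sigma$ term cancels against the variation $\dot{\omega} = \tfrac12(\Tr\sigma)\,\omega$ of the volume form, and absorb $\Delta\Tr\sigma = \dv\dv(g\Tr\sigma)$ into $\dv\dv S\sigma$. The only difference is that you offer to rederive the Lichnerowicz-type variation formula from the variation of the Christoffel symbols, whereas the paper simply cites it; both are fine.
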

\begin{proof}
We use the following well-known formula for the time derivative of $\kappa$~\cite[Lemma 2]{fischer1975deformations},~\cite[Equation 2.4]{chow2006hamilton}: 
\[
\dot{\kappa} = \frac{1}{2} \left( \dv \dv \sigma - \Delta \Tr \sigma - \langle \sigma, \Ric \rangle \right).
\]
Here, $\Ric$ denotes the Ricci tensor, which is simply $\kappa g$ in two dimensions.  Since $\Delta v = \dv \dv (gv)$ for any scalar field $v$, we can write
\[
\dot{\kappa} = \frac{1}{2} \left( \dv\dv(\sigma - g \Tr \sigma) - \kappa \Tr \sigma \right) = \frac{1}{2} \left( \dv \dv S \sigma - \kappa \Tr \sigma\right).
\]
On the other hand, we have~\cite[Equation 2.4]{chow2006hamilton}
\begin{equation} \label{voldot}
\dot{\omega} = \frac{1}{2} (\Tr \sigma) \, \omega,
\end{equation}
so
\[
\frac{\partial}{\partial t} \left( \kappa \, \omega \right) = \dot{\kappa} \, \omega + \frac{1}{2} \kappa (\Tr \sigma) \, \omega = \frac{1}{2} (\dv \dv S \sigma) \, \omega.
\]
\end{proof}

\subsection{Evolution of the Levi-Civita connection} \label{sec:conndot}

We now turn our attention to the evolution of the Levi-Civita connection $\nabla$.  
For this task, it will be convenient to focus on a region $\region \subseteq M$ on which the tangent bundle $T\region$ is trivial. {\color{dgreen} On such a region, given a metric $g$, we can choose a frame $(e_1,e_2)$ that is orthonormal with respect to $g$. Conversely, given a choice of frame $(e_1,e_2)$, there is a unique metric $g$ with respect to which the frame is orthonormal. In this context, we can encode any metric-compatible connection (not just the torsion-free Levi-Civita connection) with a (scalar-valued) one-form $A$ as follows.}

\begin{definition} \label{def:connection}
  Given a choice of frame $e_1,e_2$ for $T\region$ and a one-form $A$ on $\region$, we define a connection $\nabla$ via
  \begin{align}\label{eq:nablafromA}
    \nabla e_1&:=Ae_2,&\nabla e_2:=-Ae_1
  \end{align}
\end{definition}
These equations should be interpreted as $\nabla_Xe_1=A(X)e_2$ and $\nabla_Xe_2=-A(X)e_1$ for all vector fields $X$.

Given $(e_1,e_2)$ orthonormal with respect to a Riemannian metric $g$, there is a unique one-form $A$ that encodes the Levi-Civita connection via~(\ref{eq:nablafromA}).  For the moment, however, we will leave $A$ unspecified and study its evolution in generality.  In fact, we will temporarily dispense with $g$ and simply consider frames $e_1,e_2$ and one-forms $A$ that vary in time.  It will be helpful to also have a fixed frame $E_1,E_2$ that does not vary with time, and for simplicity we can set $E_i$ to be $e_i$ at $t=0$. We will also use this frame to define a reference connection $\tilde{\nabla}$.

\begin{definition}
  Let $E_1,E_2$ be a time-independent frame, specifically $E_i=e_i\rvert_{t=0}$. Let $u$ be the linear transformation (dependent on both space and time) that sends $E_i$ to $e_i$. Let $\tilde{\nabla}$ be the flat connection corresponding to the trivialization $E_1,E_2$, that is, $\tilde{\nabla} E_1=\tilde{\nabla} E_2=0$. As usual, $\tilde{\nabla}$ is extended to the tensor algebra via the Leibniz rule.
\end{definition}

Additionally, we define notation for ``$90^\circ$ counterclockwise rotation'' with respect to each of these frames.
\begin{definition}
  Let $J$ be the linear transformation, depending on both space and time, defined by $Je_1=e_2$ and $Je_2=-e_1$. Let $\tilde{J}$ be the linear transformation, depending on space but not time, such that $\tilde{J}E_1=E_2$ and $\tilde{J}E_2=-E_1$. Observe that $J=u\tilde{J}u^{-1}$.
\end{definition}

The difference between any two connections is a matrix-valued one-form.
\begin{definition}
  Let $a$ be the $\End(T\region)$-valued $1$-form $a:=\nabla-\tilde{\nabla}$. Equivalently, we can view $a$ as defining $\nabla$ via $\nabla:=\tilde{\nabla}+a$.
\end{definition}

\begin{notation}
  There can be confusion with $\End(T\region)$-valued $1$-forms, since, given a vector field, we can plug it into the one-forms or we can apply the linear transformations to it. We will follow convention and use the notation $\nabla_XY=\tilde{\nabla}_XY+a(X)Y$, the notation $\nabla_X=\tilde{\nabla}_X + a(X)$, and the notation $\nabla Y=\tilde{\nabla} Y+aY$. We think of $a(X)$ as a space-varying linear transformation, and we think of $aY$ as a vector-valued one-form.

  We will also encounter $\End(T\region)$-valued one-forms when we multiply a one-form and a section of $\End(T\region)$, such as $AJ$, which we interpret as $(AJ)(X)Y=(A(X))JY$. Another situation is the covariant derivative of a section of $\End(T\region)$, such as $\tilde{\nabla} u$, which we interpret as $(\tilde{\nabla} u)(X)Y=(\tilde{\nabla}_Xu)Y$. {\color{dgreen} We can also multiply an endomorphism-valued one-form $a$ by a linear transformation $u$, obtaining endomorphism-valued one-forms $au$ and $ua$, which we interpret as $(au)(X)Y=(a(X))uY$ and $(ua)(X)Y=u(a(X))Y$. We thus also have a commutator operator $[a, u] = au - ua$.}
\end{notation}

{\color{dgreen}
  We summarize the above definitions and notation in Table~\ref{tab:notation1}.
}

\begin{table}
  \color{dgreen}
  \centering
  \begin{tabular}{cp{4in}}
    \renewcommand{\arraystretch}{1.2}
    $E_1,E_2$&A frame that does not vary in time.\\
    $e_1,e_2$&A time-varying frame. In contexts with a metric $g$, this frame is orthonormal.\\
    $u$&A time-dependent linear transformation sending $(E_1,E_2)$ to $(e_1,e_2)$.\\
    $\xi$&$\dot uu^{-1}$.\\
    $\tilde J$&$90^\circ$ counterclockwise rotation with respect to the metric defined by $(E_1,E_2)$: $\tilde JE_1=E_2$, $\tilde JE_2=-E_1$.\\
    $J$&$90^\circ$ counterclockwise rotation with respect to the metric defined by $(e_1,e_2)$: $Je_1=e_2$, $Je_2=-e_1$.\\
    $\tilde\nabla$&The trivial connection defined by the frame $(E_1,E_2)$: $\tilde\nabla E_1=\tilde\nabla E_2=0$.\\
    $\nabla$&A connection compatible with the metric defined by $(e_1,e_2)$.\\
    $a$&The endomorphism-valued one-form of $\nabla$ with respect to the gauge $(E_1,E_2)$, defined by $\nabla_XE_i=a(X)E_i$.\\
    $AJ$&The endomorphism-valued one-form of $\nabla$ with respect to the gauge $(e_1,e_2)$, defined by $\nabla_Xe_i=A(X)Je_i$. $A$ itself is a scalar-valued one-form.\\
    $\mathcal Aa(X,Y)$&$a(X)Y-a(Y)X$.\\
    $\sigma$&$\frac\partial{\partial t}g$.\\                     
    $S\sigma$&$\sigma-g\Tr\sigma$.\\
    $\omega$&The volume form; $e^1\wedge e^2$.
  \end{tabular}
  \caption{Summary of notation in Section~\ref{sec:evolution}.}
  \label{tab:notation1}
\end{table}

Viewing $E_1$ and $E_2$ as given, we can now think about $\nabla$ as being defined in two different ways. The first way is via the one-form $A$ and the frame $(e_1,e_2)$, or, equivalently, via $A$ and the linear transformation $u$. The second way is via the matrix-valued one-form $a$. What is the relationship between $A$, $u$, and $a$?

\begin{proposition}
  We have
  \begin{equation}\label{eq:gauge}
    a=Au\tilde{J}u^{-1}-(\tilde{\nabla} u)u^{-1}=AJ-(\tilde{\nabla} u)u^{-1}.
  \end{equation}
\end{proposition}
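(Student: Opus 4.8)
The plan is to compute $a = \nabla - \tilde\nabla$ directly from the two defining descriptions of $\nabla$ and reconcile them. First I would evaluate $\nabla$ on the moving frame using Definition~\ref{def:connection}: $\nabla e_1 = A e_2$ and $\nabla e_2 = -A e_1$, which is compactly $\nabla e_i = A J e_i$, i.e.\ $\nabla Y = A J Y$ when $Y$ is a (spatially varying) section written in the $e_i$ frame — more precisely, for any vector field $Y$, $\nabla_X Y = \tilde\nabla_X Y + (\text{correction})$, and the correction must be computed. Since $e_i = u E_i$ and $\tilde\nabla E_i = 0$, the Leibniz rule gives $\tilde\nabla e_i = (\tilde\nabla u) E_i = (\tilde\nabla u) u^{-1} e_i$. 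Combining, $a\, e_i = \nabla e_i - \tilde\nabla e_i = A J e_i - (\tilde\nabla u) u^{-1} e_i$, and since the $e_i$ form a frame this yields $a = A J - (\tilde\nabla u) u^{-1}$ as an identity of $\End(T\region)$-valued one-forms.

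Next I would rewrite $J$ in terms of $\tilde J$ via the already-observed relation $J = u \tilde J u^{-1}$, so that $A J = A u \tilde J u^{-1}$, giving the first equality in~(\ref{eq:gauge}). The two displayed expressions in~(\ref{eq:gauge}) are then literally the same thing written two ways, so no further work is needed once both forms of $J$ are in hand.

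The main obstacle — really the only subtle point — is bookkeeping with the notational conventions for $\End(T\region)$-valued one-forms: one must be careful that $AJ$ means $(AJ)(X)Y = A(X)\, JY$ (scalar one-form times pointwise endomorphism), while $(\tilde\nabla u) u^{-1}$ means $X \mapsto (\tilde\nabla_X u) u^{-1}$, and that the Leibniz computation $\tilde\nabla_X e_i = \tilde\nabla_X(uE_i) = (\tilde\nabla_X u)E_i$ uses $\tilde\nabla E_i = 0$. I would also note the one genuinely mild check: the identity $a\,e_i = (AJ)\,e_i - ((\tilde\nabla u)u^{-1})\,e_i$ for $i=1,2$ determines $a$ uniquely as an endomorphism-valued form because $(e_1,e_2)$ is a frame, so it suffices to verify it on these two sections. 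Everything else is substitution.
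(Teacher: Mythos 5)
Your proof is correct and follows essentially the same route as the paper's: compute $\nabla$ and $\tilde{\nabla}$ on the frame, subtract, and invoke tensoriality of the difference of connections (equivalently, that it suffices to check on a frame) to conclude the identity of $\End(T\region)$-valued one-forms; the only cosmetic difference is that you work with $e_i=uE_i$ directly and so obtain $a=AJ-(\tilde{\nabla}u)u^{-1}$ first, whereas the paper writes the identity as $au=Au\tilde{J}-\tilde{\nabla}u$ and multiplies by $u^{-1}$ at the end.
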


\begin{proof}
  We rewrite \eqref{eq:nablafromA} as
  \begin{align*}
    \nabla(uE_1)&=Au\tilde{J}E_1,&\nabla(uE_2)&=Au\tilde{J}E_2.
  \end{align*}
  Meanwhile,
  \begin{align*}
    \tilde{\nabla}(uE_1)&=(\tilde{\nabla} u)E_1,&\tilde{\nabla}(uE_2)&=(\tilde{\nabla} u)E_2.
  \end{align*}
  Subtracting these equations, we obtain
  \begin{align*}
    auE_1&=(Au\tilde{J}-\tilde{\nabla} u)E_1,&auE_2=(Au\tilde{J}-\tilde{\nabla} u)E_2.
  \end{align*}
  The difference between two connections is tensorial, so we have
  \begin{equation*}
    au=Au\tilde{J}-\tilde{\nabla} u.
  \end{equation*}
  Multiplying both sides by $u^{-1}$ gives the desired result.
\end{proof}

One might also recognize \eqref{eq:gauge} as the equation for a gauge transformation. Indeed, $a$ is the matrix-valued $1$-form for $\nabla$ with respect to the trivialization $(E_1,E_2)$, whereas $AJ$ is the matrix-valued $1$-form for $\nabla$ with respect to the trivialization $(e_1,e_2)$.

Next, we discuss how \eqref{eq:gauge} changes with time.

\begin{proposition}\label{prop:adot}
  We have
  \begin{equation*}
    \dot a = \dot AJ-\nabla\xi,
  \end{equation*}
  where $\xi=\dot uu^{-1}$.
\end{proposition}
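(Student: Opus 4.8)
The plan is to differentiate the gauge identity \eqref{eq:gauge} in $t$ term by term, being careful with the noncommutative algebra of $\End(T\region)$-valued one-forms, and then to recognize that the extra terms produced by the product rule reassemble exactly into $-\nabla\xi$. Throughout I would use that $E_1,E_2$ --- hence $\tilde J$ and the reference connection $\tilde{\nabla}$ --- are time-independent, whereas $A$, $u$, and $J=u\tilde Ju^{-1}$ are not. The two elementary facts I invoke repeatedly are the derivative of an inverse, $\partial_t(u^{-1})=-u^{-1}\dot uu^{-1}$, and its covariant analogue $\tilde{\nabla}(u^{-1})=-u^{-1}(\tilde{\nabla}u)u^{-1}$, both obtained by applying the Leibniz rule to $uu^{-1}=\mathrm{id}$.

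First I would compute $\dot J$. Since $\tilde J$ is constant in time,
\[
\dot J=\dot u\,\tilde Ju^{-1}+u\tilde J\,\partial_t(u^{-1})=\dot u\,\tilde Ju^{-1}-u\tilde Ju^{-1}\dot uu^{-1},
\]
and, writing $\dot u=\xi u$, this collapses to $\dot J=\xi J-J\xi=[\xi,J]$. The product rule likewise gives $\partial_t\big[(\tilde{\nabla}u)u^{-1}\big]=(\tilde{\nabla}\dot u)u^{-1}-(\tilde{\nabla}u)u^{-1}\dot uu^{-1}$. Differentiating \eqref{eq:gauge} and substituting these two identities yields
\[
\dot a=\dot AJ+A[\xi,J]-(\tilde{\nabla}\dot u)u^{-1}+(\tilde{\nabla}u)u^{-1}\dot uu^{-1}.
\]

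It then remains to show that the last three terms equal $-\nabla\xi$. Here I would use that the covariant derivative of an $\End(T\region)$-valued section obeys $\nabla\xi=\tilde{\nabla}\xi+[a,\xi]$ --- that is, $(\nabla_X\xi)Y=(\tilde{\nabla}_X\xi)Y+[a(X),\xi]Y$, which is simply the Leibniz rule for $\nabla=\tilde{\nabla}+a$ applied to $\xi Y$ --- together with the computation $\tilde{\nabla}\xi=\tilde{\nabla}(\dot uu^{-1})=(\tilde{\nabla}\dot u)u^{-1}-\xi(\tilde{\nabla}u)u^{-1}$. Substituting $a=AJ-(\tilde{\nabla}u)u^{-1}$ into $[a,\xi]$ and using that $A(X)$ is a scalar (so it commutes past $\xi$), the $AJ$ part of $a$ contributes precisely $-A[J,\xi]=A[\xi,J]$, while the $(\tilde{\nabla}u)u^{-1}$ part of $a$, combined with $\tilde{\nabla}\xi$, yields $-(\tilde{\nabla}\dot u)u^{-1}+(\tilde{\nabla}u)u^{-1}\dot uu^{-1}$ after one cancellation of the $\xi(\tilde{\nabla}u)u^{-1}$ terms. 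Comparing with the displayed expression for $\dot a$ gives $\dot a=\dot AJ-\nabla\xi$, which is the assertion.

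The only real obstacle is bookkeeping: one must consistently distinguish objects that are one-forms (fed a vector field $X$) from those that are endomorphisms (applied to a vector), keep the order of composition straight in every product such as $(\tilde{\nabla}u)u^{-1}\xi$, and handle the conjugation term $[a,\xi]$ in $\nabla\xi$ correctly --- it is precisely this commutator that absorbs the $A[\xi,J]$ coming from $\dot J$ and the residual $\tilde{\nabla}u$ terms. One may also recognize \eqref{eq:gauge} as a gauge transformation and the result as the standard formula for its derivative in a parameter, but I would carry out the explicit manipulation above to keep the argument self-contained.
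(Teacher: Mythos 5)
Your proposal is correct and follows essentially the same route as the paper: differentiate the gauge identity \eqref{eq:gauge}, use $\dot J=[\xi,J]$ and $\partial_t(u^{-1})=-u^{-1}\dot uu^{-1}$, and recombine the resulting terms via $\nabla\xi=\tilde{\nabla}\xi+[a,\xi]$. The only difference is organizational — the paper packages the $\tilde{\nabla}u$ terms as the identity $\tilde{\nabla}\xi=\frac{d}{dt}\bigl((\tilde{\nabla}u)u^{-1}\bigr)+\bigl[(\tilde{\nabla}u)u^{-1},\xi\bigr]$ before substituting, whereas you match terms directly — but the cancellations are identical.
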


\begin{proof}
  Observe first that
  \begin{equation*}
    \dot J = \dot u\tilde{J}u^{-1}-u\tilde{J}u^{-1}\dot u u^{-1}=[\xi, J]=-[J, \xi].
  \end{equation*}
  Next, observe that
  \begin{equation*}
    \nabla\xi = \tilde{\nabla}\xi + [a,\xi].
  \end{equation*}
  Indeed, more generally, if $v$ is a section of $\End(T\region)$, and $X$ is a vector field, we have
  \begin{equation*}
    \nabla(vX) = (\nabla v)X+v\nabla X.
  \end{equation*}
  We likewise have that
  \begin{equation*}
    \tilde{\nabla}(vX) = (\tilde{\nabla} v)X+v\tilde{\nabla} X.
  \end{equation*}
  Subtracting the two equations, we have
  \begin{equation*}
    avX = (\nabla v-\tilde{\nabla} v)X + vaX,
  \end{equation*}
  so
  \begin{equation*}
    \nabla v = \tilde{\nabla} v + [a,v].
  \end{equation*}
  Continuing our computation of $\nabla\xi$, we focus on the first term and compute that
  \begin{align*}
    \tilde{\nabla}\xi&=(\tilde{\nabla}\dot u)u^{-1}-\dot uu^{-1}(\tilde{\nabla} u )u^{-1}\\
                   &=(\tilde{\nabla}\dot u)u^{-1}-(\tilde{\nabla} u)u^{-1}\dot u u^{-1}+ \left[(\tilde{\nabla} u)u^{-1},\dot uu^{-1}\right]\\
                   &=\dd{t}\left((\tilde{\nabla} u)u^{-1}\right)+\left[(\tilde{\nabla} u)u^{-1},\xi\right].
  \end{align*}

  We are now ready to differentiate \eqref{eq:gauge}. We compute
  \begin{align*}
    \dot a &= \dot AJ + A\dot J - \dd{t}\left((\tilde{\nabla} u)u^{-1}\right)\\
           &=\dot AJ - [AJ, \xi] - \tilde{\nabla}\xi+\left[(\tilde{\nabla} u)u^{-1},\xi\right]\\
           &=\dot AJ - \left(\tilde{\nabla}\xi + [a, \xi]\right)\\
           &=\dot AJ - \nabla\xi.\qedhere
  \end{align*}
\end{proof}

Here we place a warning that $\dot A$ is how the $1$-form changes, but if one would like to determine the evolution of its coefficients with respect to the basis $(e_1,e_2)$, one would need to take into account the fact that the basis is time-dependent, so there would be an additional term involving $\dot u$.

We now discuss the torsion-free condition. First, we need the following notation.
\begin{definition}
  We can think of $\End(T\region)$-valued one-forms in a different way by noting that they are sections of the bundle
  \begin{equation*}
    \Lambda^1(T^*\region)\otimes\End(T\region)=T^*\region\otimes T\region\otimes T^*\region.
  \end{equation*}
  Thus, there is a natural antisymmetrization map
  \begin{equation*}
    \A\colon\Lambda^1(T^*\region)\otimes\End(T\region)\to\Lambda^2(T^*\region)\otimes T\region,
  \end{equation*}
  defined by antisymmetrizing the two $T^*\region$ factors in $T^*\region\otimes T\region\otimes T^*\region$.
\end{definition}

The implication for the torsion-free condition is the following.
\begin{proposition}
  Assume that $\nabla$ is torsion-free at $t=0$. Then $\nabla$ remains torsion-free if and only if $\A\dot a=0$.
\end{proposition}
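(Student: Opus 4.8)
The plan rests on the standard fact that torsion is tensorial, so that changing a connection by an $\End(T\region)$-valued one-form changes its torsion by an explicit algebraic expression. Concretely, if $\nabla$ and $\nabla'$ are two connections on $T\region$ with $\nabla'-\nabla=b\in\Lambda^1(T^*\region)\otimes\End(T\region)$, then for all vector fields $X,Y$,
\[
T^{\nabla'}(X,Y)-T^{\nabla}(X,Y)=\bigl(\nabla'_XY-\nabla_XY\bigr)-\bigl(\nabla'_YX-\nabla_YX\bigr)=b(X)Y-b(Y)X,
\]
since the bracket terms cancel. The right-hand side $b(X)Y-b(Y)X$ is, by inspection, the antisymmetrization of the two $T^*\region$ slots of $b\in T^*\region\otimes T\region\otimes T^*\region$; that is, $T^{\nabla'}-T^{\nabla}=c\,\mathcal{A}b$ as sections of $\Lambda^2(T^*\region)\otimes T\region$, where $c$ is a fixed nonzero constant depending only on the normalization chosen for $\mathcal{A}$. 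Verifying this identification is the one place that requires careful (but routine) bookkeeping of indices.

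Applying this with $\nabla=\nabla(t)$ and $\nabla'=\nabla(0)$, whose difference is the $\End(T\region)$-valued one-form $a(0)-a(t)$, and invoking the hypothesis that $\nabla$ is torsion-free at $t=0$, I obtain
\[
T^{\nabla(t)}=c\,\mathcal{A}\bigl(a(t)-a(0)\bigr).
\]
Equivalently, one can differentiate the identity $T^{\nabla}=T^{\tilde\nabla}+c\,\mathcal{A}a$ in $t$ and use that $T^{\tilde\nabla}$ is time-independent, since $\tilde\nabla$ is built from the fixed frame $E_1,E_2$. Because $\mathcal{A}$ is a fixed, pointwise-linear bundle map, it commutes with $\partial/\partial t$ and with integration in $t$; thus $\frac{\partial}{\partial t}T^{\nabla(t)}=c\,\mathcal{A}\dot a$, and $\mathcal{A}\bigl(a(t)-a(0)\bigr)=\int_0^t\mathcal{A}\dot a\,dt'$.

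The equivalence now follows. If $\mathcal{A}\dot a\equiv0$, then $T^{\nabla(t)}=c\int_0^t\mathcal{A}\dot a\,dt'=0$ for all $t$, so $\nabla$ remains torsion-free. Conversely, if $T^{\nabla(t)}=0$ for all $t$, then differentiating in $t$ gives $c\,\mathcal{A}\dot a=0$, and since $c\neq0$ we conclude $\mathcal{A}\dot a=0$.

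I do not expect a real obstacle: the argument is short. The two points to handle carefully are the index-level check that $b(X)Y-b(Y)X$ is a nonzero multiple of $\mathcal{A}b$, and the role of the initial condition — it is precisely what makes the ``only if'' direction need the hypothesis, pinning down the constant of integration. Note that this proof does not use the evolution formula $\dot a=\dot AJ-\nabla\xi$ of Proposition~\ref{prop:adot}; that formula is what one would bring in next, to re-express the condition $\mathcal{A}\dot a=0$ in terms of $\dot A$ and the gauge field $\xi$.
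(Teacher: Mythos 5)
Your proof is correct and follows essentially the same route as the paper: both arguments reduce to the observation that the torsion of $\nabla=\tilde{\nabla}+a$ differs from the time-independent torsion of $\tilde{\nabla}$ by exactly the antisymmetrization $\A a$ (the paper's convention makes your constant $c=1$), so with the initial condition the torsion vanishes for all $t$ if and only if $\A\dot a=0$. The only cosmetic difference is that you phrase the step through the tensoriality of the difference of two connections and an explicit integration in $t$, whereas the paper reads the same conclusion directly off the torsion-free identity.
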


\begin{proof}
  The torsion-free condition is that
  \begin{equation*}
    \nabla_XY-\nabla_YX=[X,Y]
  \end{equation*}
  for all vector fields $X$ and $Y$. We can rewrite this equation as
  \begin{equation*}
    \tilde{\nabla}_XY-\tilde{\nabla}_YX+a(X)Y-a(Y)X=[X,Y].
  \end{equation*}
  In terms of our antisymmetrization operator, the above equation is
  \begin{equation}\label{eq:torsionfree}
    \tilde{\nabla}_XY-\tilde{\nabla}_YX+\A a(X,Y)=[X,Y].
  \end{equation}
  Note that, if $X$ and $Y$ are time-independent, then the only term in \eqref{eq:torsionfree} that depends on time is $\A a$. We assumed that $\nabla$ is torsion-free at $t=0$, so \eqref{eq:torsionfree} holds at $t=0$. Thus, it continues to hold at all future times if and only if $\A\dot a=0$.
\end{proof}

Combining with Proposition \ref{prop:adot}, we can then understand how $A$ evolves in time if $\nabla$ is torsion-free. We first need the following definitions and lemma.

\begin{definition}
  Let $\flat$ denote the time-dependent map $T\region\to T^*\region$ that sends $e_1\mapsto e^1$ and $e_2\mapsto e^2$, and let $\sharp$ denote its inverse. Let $\omega$ be the time-dependent $2$-form $e^1\wedge e^2$.
\end{definition}

\begin{lemma}\label{lemma:AAJ}
  If $\alpha$ is a one-form, then
  \begin{equation*}
    \A(\alpha J)=-\omega\,\alpha^\sharp
  \end{equation*}
\end{lemma}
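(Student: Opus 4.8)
The plan is to verify the identity by evaluating both sides on an arbitrary pair of time-independent vector fields $X,Y$ and checking that they agree. Since $\A(\alpha J)$ is a $\Lambda^2(T^*\region)\otimes T\region$-valued object, it suffices by bilinearity and antisymmetry to test on the single pair $X=e_1$, $Y=e_2$ (this determines the value on any pair up to the scalar factor $\omega(X,Y)$). First I would unwind the definition of $\A$: for the $\End(T\region)$-valued one-form $\alpha J$, we have $\A(\alpha J)(X,Y) = (\alpha J)(X)Y - (\alpha J)(Y)X = \alpha(X)\,JY - \alpha(Y)\,JX$, using the convention $(\alpha J)(X)Y = (\alpha(X))JY$ stated in the Notation block.

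Next I would plug in $X=e_1$, $Y=e_2$ and use $Je_1=e_2$, $Je_2=-e_1$ to get $\A(\alpha J)(e_1,e_2) = \alpha(e_1)Je_2 - \alpha(e_2)Je_1 = -\alpha(e_1)e_1 - \alpha(e_2)e_2$. Now I claim this equals $-\alpha^\sharp$ evaluated... more precisely, I need to relate $-\alpha(e_1)e_1 - \alpha(e_2)e_2$ to $-\omega\,\alpha^\sharp$ evaluated on $(e_1,e_2)$. Since $\omega = e^1\wedge e^2$, we have $\omega(e_1,e_2)=1$, so $(-\omega\,\alpha^\sharp)(e_1,e_2) = -\alpha^\sharp$. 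Meanwhile, writing $\alpha = \alpha(e_1)e^1 + \alpha(e_2)e^2$ and applying $\sharp$, which sends $e^i\mapsto e_i$, gives $\alpha^\sharp = \alpha(e_1)e_1 + \alpha(e_2)e_2$. Hence $-\alpha^\sharp = -\alpha(e_1)e_1 - \alpha(e_2)e_2$, matching the computation above. Since both sides are antisymmetric bilinear in $(X,Y)$ with values in $T\region$, agreement on the basis pair $(e_1,e_2)$ forces agreement everywhere, proving the identity.

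The only subtlety, and the step I would be most careful about, is making sure the bookkeeping of the musical isomorphism and the $2$-form are consistent with the paper's (time-dependent) conventions: $\flat$ sends $e_i\mapsto e^i$, so $\sharp$ sends $e^i\mapsto e_i$, and $\omega=e^1\wedge e^2$ is the area form of the frame; all of these are defined relative to the frame $(e_1,e_2)$ rather than relative to a metric, so I must avoid implicitly invoking a metric inner product. Once the conventions are pinned down, there is essentially no computation left — this is a one-line identity dressed up in coordinate-free notation, and the ``main obstacle'' is purely notational rather than mathematical.

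Alternatively, one can phrase the whole argument index-free by noting that for any one-form $\alpha$ and any endomorphism-valued one-form of the form $\alpha\otimes T$ (with $T\in\End(T\region)$), $\A(\alpha\otimes T)(X,Y) = \alpha(X)TY - \alpha(Y)TX$, and then using that $J$ acts on the fixed basis by the rotation matrix; I would present whichever version is shorter, most likely the direct evaluation on $(e_1,e_2)$.
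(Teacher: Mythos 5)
Your proposal is correct and follows essentially the same route as the paper: both determine the $\Lambda^2\otimes T\region$-valued quantity by evaluating on the frame pair $(e_1,e_2)$, use $Je_1=e_2$, $Je_2=-e_1$ to get $-\alpha(e_1)e_1-\alpha(e_2)e_2$, and identify this with $-\alpha^\sharp$ via the frame-based $\sharp$. Your care about the frame-based (rather than metric-based) conventions matches the paper's setup exactly.
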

\begin{proof}
  We compute
  \begin{align*}
    \A(\alpha J)&=\omega\,\A(\alpha J)(e_1,e_2)\\
                &=\omega\left(\alpha(e_1)Je_2-\alpha(e_2)Je_1\right)\\
                &=\omega\left(-\alpha(e_1)e_1-\alpha(e_2)e_2\right)\\
                &=-\omega\,\alpha^\sharp.\qedhere
  \end{align*}
\end{proof}

\begin{proposition}\label{prop:Adot}
  If $\nabla$ is torsion-free, then we have
  \begin{align*}
    \dot A&=\left(\nabla_{e_2}\xi e_1-\nabla_{e_1}\xi e_2\right)^\flat\\
    &=-\left((\A\nabla\xi)(e_1,e_2)\right)^\flat
  \end{align*}
  where $\xi=\dot uu^{-1}$ and the notation $\nabla_X\xi Y$ denotes first applying the covariant derivative to $\xi$, and then applying the resulting linear transformation to $Y$.
\end{proposition}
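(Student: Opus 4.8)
The plan is to feed Proposition~\ref{prop:adot} into the torsion-free characterization established just above it and then simplify using Lemma~\ref{lemma:AAJ}. By Proposition~\ref{prop:adot} we have $\dot a = \dot A J - \nabla\xi$ with $\xi = \dot u u^{-1}$. Since $\nabla$ is assumed torsion-free at every time, the preceding proposition gives $\A\dot a = 0$, hence
\[
\A(\dot A J) = \A(\nabla\xi).
\]

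Next I would rewrite both sides as multiples of $\omega$. On the left, Lemma~\ref{lemma:AAJ} applied to the one-form $\alpha = \dot A$ gives $\A(\dot A J) = -\omega\,(\dot A)^\sharp$. On the right, since $\Lambda^2(T^*\region)$ is one-dimensional and $\omega(e_1,e_2) = 1$, every section $T$ of $\Lambda^2(T^*\region)\otimes T\region$ satisfies $T = \omega\,T(e_1,e_2)$; in particular $\A(\nabla\xi) = \omega\,(\A\nabla\xi)(e_1,e_2)$. Because $\omega$ is nowhere vanishing, equating the two expressions forces
\[
(\dot A)^\sharp = -(\A\nabla\xi)(e_1,e_2),
\]
and applying $\flat$ yields the second displayed formula in the statement.

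Finally I would unwind the antisymmetrization. The $\End(T\region)$-valued one-form $\nabla\xi$ acts by $(\nabla\xi)(X)Y = (\nabla_X\xi)Y$ (the covariant derivative of the endomorphism $\xi$, equal to $\tilde{\nabla}\xi + [a,\xi]$ as recorded in the proof of Proposition~\ref{prop:adot}, so well defined), and with the normalization of $\A$ fixed by~\eqref{eq:torsionfree} we have $(\A\nabla\xi)(X,Y) = \nabla_X\xi\,Y - \nabla_Y\xi\,X$. Evaluating at $X = e_1$, $Y = e_2$, negating, and applying $\flat$ turns the second formula into the first. The argument is only a few lines; the one thing to watch is the bookkeeping — the absence of a factor of $\tfrac{1}{2}$ in $\A$ and the identification of $\Lambda^2(T^*\region)\otimes T\region$ with $T\region$ via evaluation on the frame $(e_1,e_2)$ — so I do not anticipate a genuine obstacle.
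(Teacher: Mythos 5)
Your proposal is correct and follows essentially the same route as the paper's proof: apply $\A$ to Proposition~\ref{prop:adot}, use $\A\dot a=0$ from the torsion-free characterization, identify the left side via Lemma~\ref{lemma:AAJ} and the right side via $\A(\nabla\xi)=\omega\,(\A\nabla\xi)(e_1,e_2)$, and unwind. Your bookkeeping (no factor of $\tfrac12$ in $\A$, evaluation on the frame with $\omega(e_1,e_2)=1$) matches the paper's conventions exactly.
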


\begin{proof}
  Applying the antisymmetrization operator $\A$ to Proposition \ref{prop:adot} and using the torsion-free condition $\A\dot a=0$, we obtain that
  \begin{equation*}
    \A(\dot AJ)=\A(\nabla\xi).
  \end{equation*}
  By Lemma~\ref{lemma:AAJ}, the left-hand side is $-\omega\,\dot A^\sharp$. Moving on to the right-hand side, we have
  \begin{equation*}
    \A(\nabla\xi)=\omega\,\left(\nabla_{e_1}\xi e_2-\nabla_{e_2}\xi e_1\right).
  \end{equation*}
  Thus,
  \begin{equation*}
    (\dot A)^\sharp=(\nabla_{e_2}\xi)e_1-(\nabla_{e_1}\xi)e_2,
  \end{equation*}
  and the result follows.
\end{proof}

Let us now re-introduce the metric $g$ and interpret Proposition~\ref{prop:Adot} in the following context: We suppose that $g$ evolves with time and $e_1,e_2$ forms an orthonormal frame at all times.  Our setup ensures that the connection $\nabla$ will be the Levi-Civita connection in this context, since orthonormality of $e_1,e_2$ is tantamount to metric compatibility.  

We can relate the components of $\sigma = \frac{\partial}{\partial t}g$ to those of $u$ with the help of two observations.  
On one hand, we have
\begin{align*}
\sigma(e_1,e_1) &= 2\dot{e}^1(e_1) , \\
\sigma(e_2,e_2) &= 2\dot{e}^2(e_2), \\
\sigma(e_1,e_2) &= \dot{e}^1(e_2) + \dot{e}^2(e_1),
\end{align*}
since
\[
0 = \frac{d}{dt} e^i(e_j) = \dot{e}^i(e_j) + e^i(\dot{e}_j) = \dot{e}^i(e_j) + g(e_i,\dot{e}_j)
\]
and
\[
0 = \frac{d}{dt} g(e_i,e_j) = \sigma(e_i,e_j) + g(\dot{e}_i,e_j) + g(e_i,\dot{e}_j)
\]
for each $i,j=1,2$.  On the other hand, 
\begin{equation*}
  \dot e^i(e_j)=\dd{t}\left(e^i(e_j)\right)-e^i\left(\dot e_j\right)=-e^i\left(\dd{t}(uE_j)\right)=-e^i(\dot uE_j) =-e^i(\dot u u^{-1} e_j).
\end{equation*}
Thus, with $\xi=\dot{u}u^{-1}$, we have
\begin{align*}
  2\xi^1_1&=-\sigma_{11},&2\xi^1_2&=-\sigma_{12}-2f,\\
  2\xi^2_1&=-\sigma_{12}+2f,&2\xi^2_2&=-\sigma_{22},
\end{align*}
where $\xi^i_j = e^i(\xi e_j)$, $\sigma_{ij}=\sigma(e_i,e_j)$, and $f = \frac{1}{2}\sigma_{12} - \dot{e}^2(e_1) = -\frac{1}{2}\sigma_{12} + \dot{e}^1(e_2)$.
Put another way, we have a decomposition of $\xi$ into its symmetric and antisymmetric parts as
\begin{equation} \label{udotuinv}
  \xi = -\tfrac12\sigma^\sharp + fJ,
 \end{equation}
where, in this case, $\sharp$ refers to the map defined by $e^i\otimes e^j\mapsto e_i\otimes e^j$. Indeed, we note that, in coordinates, $J^1_2=e^1(Je_2)=e^1(-e_1)=-1$ and $J^2_1=e^2(Je_1) = e^2(e_2) =1$.

Observe that $\nabla J=0$. One way to see this is to observe that $J^\flat=\omega$, where $\flat$ is the inverse of $\sharp$. Metric compatibility gives $\nabla g=0$ and $\nabla\omega=0$. Moreover, note that $\flat$ is just contraction with $g$, so $(\nabla J)^\flat=\nabla\left(J^\flat\right)=\nabla\omega=0$. Since $\flat$ is an isomorphism, $\nabla J=0$. We thus have
\begin{equation*}
  \nabla\xi = -\tfrac12\nabla\sigma^\sharp+(df)J.
\end{equation*}
Antisymmetrizing both sides, we get
\[
\A\nabla\xi = -\tfrac12\A\nabla\sigma^\sharp+\A((df)J).
\]
Using Proposition~\ref{prop:Adot}, we have that 
\[
(\A\nabla\xi)(e_1,e_2) = -\dot{A}^\sharp.
\]
Meanwhile, using Lemma~\ref{lemma:AAJ}, we have,
\begin{equation*}
  \A((df)J)(e_1,e_2)=-(df)^\sharp.
\end{equation*}
We conclude that
\begin{equation*}
  \left(\dot A-df\right)^\sharp=\tfrac12\A\nabla\sigma^\sharp(e_1,e_2)=\tfrac12\left(\nabla_{e_1}\sigma^\sharp e_2-\nabla_{e_2}\sigma^\sharp e_1\right).
\end{equation*}
By metric compatibility, the $\sharp$ and $\flat$ maps commute with the covariant derivative, so, applying $\flat$ to the above equation (i.e. contracting with $g$), we obtain
\begin{equation}\label{eq:Adf}
  \dot A-df=\tfrac12\left(\nabla_{e_1}\sigma e_2-\nabla_{e_2}\sigma e_1\right),
\end{equation}
where we interpret $\nabla_{e_1}\sigma e_2$ as the one-form $X\mapsto(\nabla_{e_1}\sigma)(e_2,X)$.

It turns out that the right-hand side of the above equation is $-\frac{1}{2}$ of the Hodge star of $\dv S \sigma$.  We summarize and prove this fact below.
\begin{proposition} \label{prop:Adotdiv}
If $\nabla$ is the Levi-Civita connection associated with an evolving metric $g(t)$, and if $e_1,e_2$ is orthonormal at all times, then 
\begin{equation} \label{Adotdiv}
\dot{A} - df = -\frac{1}{2}\star \dv S \sigma,
\end{equation}
where $\sigma = \frac{\partial}{\partial t}g$, $\star$ denotes the Hodge star operator associated with $g$, and
\begin{equation} \label{gauge}
f = \frac{1}{2}\sigma_{12} - \dot{e}^2(e_1) = -\frac{1}{2}\sigma_{12} + \dot{e}^1(e_2).
\end{equation}
\end{proposition}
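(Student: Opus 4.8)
The discussion preceding the statement already produces everything except one purely pointwise, evolution-free fact: equation~\eqref{eq:Adf} gives $\dot A - df = \tfrac12\left(\nabla_{e_1}\sigma\, e_2 - \nabla_{e_2}\sigma\, e_1\right)$, and the two expressions for $f$ in~\eqref{gauge} are exactly the definition of $f$ used there. So the plan is to reduce the proposition to the identity
\[
\nabla_{e_1}\sigma\, e_2 - \nabla_{e_2}\sigma\, e_1 = -\star\,\dv S\sigma,
\]
which involves only the symmetric $(0,2)$-tensor $\sigma$, the metric $g$, and the orthonormal frame. Since both sides are one-forms, it suffices to check equality on the frame vectors $e_1,e_2$. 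I would fix the dual coframe $e^1,e^2$ (so $\omega = e^1\wedge e^2$), in which the Hodge star acts on one-forms by $\star e^1 = e^2$, $\star e^2 = -e^1$, and abbreviate $\sigma_{ij;k} := (\nabla_{e_k}\sigma)(e_i,e_j)$, which is symmetric in $i,j$.

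Expanding the left-hand side, with the convention (stated after~\eqref{eq:Adf}) that $\nabla_{e_1}\sigma\,e_2$ is the one-form $X\mapsto(\nabla_{e_1}\sigma)(e_2,X)$, gives the one-form whose $e^1$- and $e^2$-components are $\sigma_{12;1}-\sigma_{11;2}$ and $\sigma_{22;1}-\sigma_{12;2}$, respectively. For the right-hand side I would use metric compatibility, $\nabla g = 0$, to write $\nabla S\sigma = \nabla\sigma - g\,d(\Tr\sigma)$, so that $(\nabla_{e_k}S\sigma)(e_i,e_j) = \sigma_{ij;k} - \delta_{ij}\left(\sigma_{11;k}+\sigma_{22;k}\right)$, where I also used $\nabla_{e_k}(\Tr\sigma) = e_k(\Tr\sigma) = \sigma_{11;k}+\sigma_{22;k}$ in an orthonormal frame. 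Contracting on the first two slots, $(\dv S\sigma)(e_j) = (\nabla_{e_1}S\sigma)(e_1,e_j) + (\nabla_{e_2}S\sigma)(e_2,e_j)$, and the trace contributions cancel, leaving $(\dv S\sigma)(e_1) = \sigma_{12;2}-\sigma_{22;1}$ and $(\dv S\sigma)(e_2) = \sigma_{12;1}-\sigma_{11;2}$. Applying $-\star$ to $\dv S\sigma$ then reproduces exactly the one-form computed from the left-hand side, establishing the identity and hence~\eqref{Adotdiv}.

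The computation is essentially bookkeeping; there is no substantive obstacle. The two places that require a little care are the cancellation of the $\Tr\sigma$ terms in $\dv S\sigma$ (this is precisely where the definition $S\sigma = \sigma - g\Tr\sigma$ is used), and keeping the sign conventions for $\star$ on one-forms consistent throughout.
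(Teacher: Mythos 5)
Your proposal is correct and follows essentially the same route as the paper: both take \eqref{eq:Adf} as the starting point and reduce the proposition to verifying, componentwise in the orthonormal frame, that $\nabla_{e_1}\sigma\,e_2-\nabla_{e_2}\sigma\,e_1$ and $\dv S\sigma$ are related by the Hodge star (the paper checks $\dv S\sigma=\star\alpha$ while you check $-\star\dv S\sigma=\alpha$, which are equivalent since $\star\star=-1$ on one-forms). The component values you obtain, including the cancellation of the $\Tr\sigma$ terms, match the paper's computation exactly.
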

\begin{proof}
In light of \eqref{eq:Adf} and the fact that $\star\star\alpha=-\alpha$ for one-forms $\alpha$, it remains to check that $\dv S\sigma$ is the Hodge star of
\begin{equation*}
  \alpha = \nabla_{e_1}\sigma e_2-\nabla_{e_2}\sigma e_1.
\end{equation*}
Since $S\sigma = \sigma - g \Tr \sigma$, we have 
\begin{align*}
(\dv S \sigma)(e_2) 
&= \nabla_{e_1} (S\sigma)(e_1, e_2) + \nabla_{e_2} (S\sigma)(e_2, e_2) \\
&= (\nabla_{e_1} \sigma)(e_1,e_2) - \nabla_{e_1} (g \Tr \sigma)(e_1, e_2) + (\nabla_{e_2} \sigma)(e_2, e_2) - \nabla_{e_2} (g \Tr \sigma)(e_2, e_2) \\
&= (\nabla_{e_1} \sigma)(e_2,e_1) - g(e_2, e_1) \nabla_{e_1} \Tr \sigma + (\nabla_{e_2} \sigma)(e_2,e_2) - g(e_2,e_2) \nabla_{e_2} \Tr \sigma \\
&= (\nabla_{e_1} \sigma)(e_2,e_1) + (\nabla_{e_2} \sigma)(e_2,e_2) - \nabla_{e_2} \Tr \sigma.
\end{align*}
Since the trace commutes with covariant differentiation, 
\[
\nabla_{e_2} \Tr \sigma = \Tr \nabla_{e_2} \sigma = (\nabla_{e_2} \sigma)(e_1,e_1) + (\nabla_{e_2} \sigma)(e_2,e_2).
\]
Thus,
\begin{align*}
(\dv S \sigma)(e_2) 
&= (\nabla_{e_1} \sigma)(e_2,e_1) - (\nabla_{e_2} \sigma)(e_1,e_1) \\
&= \alpha(e_1).
\end{align*}
A similar calculation gives 
\[
(\dv S\sigma)(e_1) = -\alpha(e_2),
\]
so $\dv S\sigma$ is indeed the Hodge star of $\alpha$.
\end{proof}

We now remark on the relationship between the connection one-form $A$ and the Gaussian curvature $\kappa$.

{\color{dgreen}
\begin{proposition}
  The curvature of the connection $\nabla$ in Definition~\ref{def:connection} is the endomorphism-valued $2$-form $(dA)J$.
\end{proposition}
\begin{proof}
  Given vector fields $X$ and $Y$, we have
    \begin{equation*}
      \begin{split}
        &\phantom{={}}\nabla_X\nabla_Ye_1-\nabla_Y\nabla_Xe_1-\nabla_{[X,Y]}e_1\\
        &=\nabla_X(A(Y)e_2)-\nabla_Y(A(X)e_2)-A([X,Y])e_2\\
        &=\nabla_X(A(Y))e_2+A(Y)\nabla_Xe_2-\nabla_Y(A(X))e_2-A(X)\nabla_Ye_2-A([X,Y])e_2\\
        &=\left(\nabla_X(A(Y))-\nabla_Y(A(X))-A([X,Y])\right)e_2-A(Y)A(X)e_1+A(X)A(Y)e_1\\
        &=dA(X, Y)Je_1.
      \end{split}
    \end{equation*}
    The computation for $\nabla_X\nabla_Ye_2-\nabla_Y\nabla_Xe_2-\nabla_{[X,Y]}e_2$ is analogous.

    Alternatively, we can observe that, with respect to the gauge defined by the frame $(e_1,e_2)$, the endomorphism-valued one-form of the connection $\nabla$ is $AJ$. The curvature is $d(AJ)+\frac12[AJ\wedge AJ]$, which is equal to $(dA)J$; indeed, the matrix of $J$ with respect to the frame $(e_1,e_2)$ is constant, so $dJ=0$, and the commutator term vanishes because $J\in\mathfrak{so}(2)$, which is Abelian.
\end{proof}

\begin{remark}
The Gaussian curvature $\kappa$ is then $g(dA(e_1,e_2)Je_2,e_1)=-dA(e_1,e_2)$, so $\kappa\,\omega=-dA$. Thus, another way to obtain formula~(\ref{kappavoldot}) for $\frac{\partial}{\partial t}(\kappa \, \omega)$ is to take the exterior derivative of~(\ref{Adotdiv}), which yields
\begin{align*}
d\dot{A} = -\frac{1}{2} d \star \dv S \sigma = -\frac{1}{2} \star \star^{-1} d \star \dv S \sigma = -\frac{1}{2}\star \dv \dv S \sigma = -\frac{1}{2} (\dv \dv S \sigma) \, \omega
\end{align*}
since the operators $\star^{-1} d \star$ and $\dv$ coincide on one-forms.
\end{remark}
}

\subsection{Geodesic curvature evolution} \label{sec:klengthdot}

Next, we consider a curve $\mathcal{C}$ in $M$ and study the evolution of its geodesic curvature $k$, weighted by the induced length $1$-form on $\mathcal{C}$.  Let $\tau$ be a unit tangent and $n$ a unit normal, with the convention that $(n,\tau)$ is a right-handed frame, so for a circle oriented counterclockwise, $n$ is the outward normal. With this convention, we let the geodesic curvature be 
\[
k = -g(\nabla_\tau\tau, n) = g(\nabla_\tau n, \tau),
\] 
so the geodesic curvature of a counterclockwise circle is positive.  We let $ds$ be the natural length one-form on $\mathcal C$; that is, $ds=\tau^\flat$ with respect to the induced metric on $\mathcal C$. We now determine how the geodesic curvature evolves over time in terms of $\sigma$, or, more specifically, we compute the evolution of $k\,ds$ as a one-form on $\mathcal C$.

\begin{proposition} \label{prop:klengthdot}
  With notation as above,
  \begin{equation*}
    \frac{\partial}{\partial t}(k\,ds) = -\frac{1}{2}\bigl((\dv S\sigma)(n)+\nabla_\tau(\sigma(n,\tau))\bigr)\,ds.
  \end{equation*}
\end{proposition}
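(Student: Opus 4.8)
The plan is to deduce the formula from Proposition~\ref{prop:Adotdiv} by working in the orthonormal frame adapted to $\mathcal{C}$. Fix a point of $\mathcal{C}$ and a neighborhood $\region$ of it in $M$ on which $T\region$ is trivial; since the asserted identity is pointwise along $\mathcal{C}$, it suffices to verify it on $\region\cap\mathcal{C}$. Choose a time-dependent orthonormal frame $e_1,e_2$ on $\region$ whose restriction to $\mathcal{C}$ is $e_1=n$, $e_2=\tau$ at all times; this is consistent with the orientation because $(n,\tau)$ is right-handed. Let $A$ be the one-form encoding the associated connection as in Definition~\ref{def:connection}. Since $e_1,e_2$ is orthonormal at all times, $\nabla$ is the Levi-Civita connection of $g(t)$, so Proposition~\ref{prop:Adotdiv} applies.

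The first observation is that $k\,ds = i_{M,\mathcal{C}}^*A$ as time-dependent one-forms on $\mathcal{C}$. Indeed, along $\mathcal{C}$ we have $\nabla_\tau n = \nabla_\tau e_1 = A(\tau)e_2 = A(\tau)\tau$, hence $k = g(\nabla_\tau n,\tau) = A(\tau)$; since $ds(\tau)=1$, both sides are the one-form sending $\tau$ to $k$. The inclusion $i_{M,\mathcal{C}}$ is time-independent, so differentiating and invoking Proposition~\ref{prop:Adotdiv} gives
\[
\frac{\partial}{\partial t}(k\,ds) = i_{M,\mathcal{C}}^*\dot A = i_{M,\mathcal{C}}^*(df) - \tfrac12\, i_{M,\mathcal{C}}^*(\star\dv S\sigma).
\]

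The second, and crucial, observation is that the gauge function $f$ from~\eqref{gauge} restricts to $-\tfrac12\sigma(n,\tau)$ on $\mathcal{C}$. This uses that $\mathcal{C}$ is fixed while $g$ evolves: writing $\tau = \gamma'/\lvert\gamma'\rvert_{g(t)}$ for a fixed parametrization $\gamma$ shows $\dot\tau = -\tfrac12\sigma(\tau,\tau)\,\tau$, so $g(\dot\tau,n)=0$ and therefore $\dot e^2(e_1) = \sigma(\tau,n) + g(\dot\tau,n) = \sigma(n,\tau)$; by~\eqref{gauge}, $f = \tfrac12\sigma_{12} - \dot e^2(e_1) = -\tfrac12\sigma(n,\tau)$ along $\mathcal{C}$. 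Consequently $i_{M,\mathcal{C}}^*(df) = d\bigl(f\rvert_\mathcal{C}\bigr) = -\tfrac12\nabla_\tau(\sigma(n,\tau))\,ds$.

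Finally, for any one-form $\beta$ on $M$ we have $\star\beta = \beta(e_1)e^2 - \beta(e_2)e^1 = \beta(n)\tau^\flat - \beta(\tau)n^\flat$ in the adapted frame; pulling back to $\mathcal{C}$ annihilates $n^\flat$ (since $g(n,\tau)=0$) and sends $\tau^\flat$ to $ds$, so $i_{M,\mathcal{C}}^*(\star\dv S\sigma) = (\dv S\sigma)(n)\,ds$. Substituting the last two displays into the first yields the claimed identity. The only delicate point is the relation $\dot\tau\parallel\tau$; without it $f\rvert_\mathcal{C}$ would acquire an extra term $-g(\dot\tau,n)$ and the computation would not close. (Alternatively, one could bypass $A$ altogether and differentiate $k=g(\nabla_\tau n,\tau)$ and $ds=\tau^\flat$ directly, but this requires the evolution of $\nabla$ and is less economical.)
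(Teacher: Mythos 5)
Your proposal is correct and follows essentially the same route as the paper: apply Proposition~\ref{prop:Adotdiv} in the adapted frame $e_1=n$, $e_2=\tau$, identify $i_{M,\mathcal C}^*A=k\,ds$, use $\dot\tau\parallel\tau$ to get $f=-\tfrac12\sigma(n,\tau)$ along $\mathcal C$, and pull back $\star\dv S\sigma$ to obtain $(\dv S\sigma)(n)\,ds$. The only (harmless) cosmetic differences are that you extend the Frenet frame to a neighborhood explicitly and evaluate $f$ via $\dot e^2(e_1)$ rather than $\dot e^1(e_2)$.
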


\begin{proof}
  The main idea is that we apply Proposition \ref{prop:Adotdiv} to the Frenet frame, $e_1=n$, $e_2=\tau$, restricting the one-forms in equation \eqref{Adotdiv} to $\mathcal C$, or, equivalently, evaluating these one-forms on $\tau$.

  We start by observing that applying \eqref{eq:nablafromA} to this frame implies
  \begin{equation*}
    A(\tau) = g(\nabla_\tau n, \tau) = k.
  \end{equation*}
  Letting $i\colon\mathcal C\to M$ denote the inclusion map, we therefore conclude that $i^*A = k\,ds$. The pullback of forms does not depend on the metric, so we can conclude that
  \begin{equation*}
    i^*\dot A = \frac{\partial}{\partial t}(k\,ds).
  \end{equation*}

  Next, we move on to $f$. We first observe that $\dot\tau$ is parallel to $\tau$, so $g(n, \dot\tau)=0$. In the notation of Section \ref{sec:conndot}, since the $e_i$ are an orthonormal frame, we can write this as $0=e^1(\dot e_2)=-\dot e^1(e_2)$, where the second equality comes from differentiating $e^1(e_2)=0$. We conclude then that $f=-\frac{1}{2}\sigma_{12}=-\frac{1}{2}\sigma(n,\tau)$. From here, we conclude that $df(\tau)=-\frac{1}{2}\nabla_\tau\bigl(\sigma(n,\tau)\bigr)$, and so
  \begin{equation*}
    i^*df=-\frac{1}{2}\nabla_\tau\bigl(\sigma(n, \tau)\bigr)\,ds.
  \end{equation*}

  Finally, we observe that since $\star$ rotates one-forms $90^\circ$ counterclockwise that $\bigl(\star\dv S\sigma\bigr)(\tau) = \bigl(\dv S\sigma\bigr)(n)$, so
  \begin{equation*}
    i^*\bigl(\star\dv S\sigma\bigr) = \bigl(\dv S\sigma\bigr)(n)\,ds.
  \end{equation*}

  Putting everything together, the restriction of \eqref{Adotdiv} to $\mathcal C$ tells us that
  \begin{equation*}
    \frac{\partial}{\partial t}(k\,ds)=-\frac{1}{2}\bigl(\nabla_\tau(\sigma(n, \tau))+(\dv S\sigma)(n)\bigr)\,ds.\qedhere
  \end{equation*}
\end{proof}

\subsection{Evolution of angles} \label{sec:angledot}

Next we study how angles evolve with time.  

{\color{blue}

\begin{proposition} \label{prop:frameangledot}
Let $(e_1(t),e_2(t))$ and $(\bar{e}_1(t),\bar{e}_2(t))$ be two time-varying frames that are each $g(t)$-orthonormal at all times.  Let $\theta(t)$ denote the counterclockwise angle by which $(e_1(t),e_2(t))$ is rotated relative to $(\bar{e}_1(t),\bar{e}_2(t))$, so that $\cos\theta = g(e_1,\overline{e}_1)$.  Then
\[
\frac{\partial}{\partial t}\theta = -\frac{1}{2}\sigma(e_1,e_2) + \frac{1}{2}\sigma(\bar{e}_1,\bar{e}_2) + \dot{e}^1(e_2) - \dot{\bar{e}}^1(\bar{e}_2),
\]
where $\sigma=\frac{\partial}{\partial t}g$.
\end{proposition}
\begin{proof}
Assume without loss of generality that $(\bar{e}_1(0),\bar{e}_2(0)) = (e_1(0),e_2(0))$.  
Let $u(t)$ denote the linear transformation that sends $(e_1(0),e_2(0))$ to $(e_1(t),e_2(t))$, and let $\bar{u}(t)$ denote the linear transformation that sends $(\bar{e}_1(0),\bar{e}_2(0))$ to $(\bar{e}_1(t),\bar{e}_2(t))$.  The matrix of $v(t)=u(t)\bar{u}(t)^{-1}$ with respect to the basis $(e_1(t),e_2(t))$ is a rotation by $\theta(t)$, so the matrix of $\dot{v}(t)v(t)^{-1}$ with respect to that basis is skew-symmetric with off-diagonal entries equal to $\pm \dot{\theta}$.  In other words,
\begin{align*}
\dot{\theta} 
&= g(\dot{v}v^{-1} e_1, e_2) \\
&= \frac{1}{2} \left( g(\dot{v}v^{-1} e_1, e_2)  - g(\dot{v}v^{-1} e_2, e_1) \right),
\end{align*}
where we used skew-symmetry to pass from the first to the second line.  Since
\begin{align*}
\dot{v}v^{-1} 
&= \dot{u} \bar{u}^{-1} \bar{u} u^{-1} - u \bar{u}^{-1} \dot{\bar{u}} \bar{u}^{-1} \bar{u} u^{-1} \\
&= \dot{u}u^{-1} - v \dot{\bar{u}} \bar{u}^{-1} v^{-1}
\end{align*}
and $v$ is an isometry, we find
\begin{align*}
\dot{\theta} 
&= \frac{1}{2} \left( g(\dot{u}u^{-1} e_1, e_2)  - g(\dot{u}u^{-1} e_2, e_1) \right) - \frac{1}{2} \left( g(v\dot{\bar{u}}\bar{u}^{-1} v^{-1} e_1, e_2)  - g(v\dot{\bar{u}}\bar{u}^{-1}v^{-1} e_2, e_1) \right) \\
&= \frac{1}{2} \left( g(\dot{u}u^{-1} e_1, e_2)  - g(\dot{u}u^{-1} e_2, e_1) \right) - \frac{1}{2} \left( g(\dot{\bar{u}}\bar{u}^{-1} v^{-1} e_1, v^{-1} e_2)  - g(\dot{\bar{u}}\bar{u}^{-1}v^{-1} e_2, v^{-1} e_1) \right) \\
&= \frac{1}{2} \left( g(\dot{u}u^{-1} e_1, e_2)  - g(\dot{u}u^{-1} e_2, e_1) \right) - \frac{1}{2} \left( g(\dot{\bar{u}}\bar{u}^{-1} \bar{e}_1, \bar{e}_2)  - g(\dot{\bar{u}}\bar{u}^{-1} \bar{e}_2, \bar{e}_1) \right).
\end{align*}
In view of~(\ref{udotuinv}) and the symmetry of $\sigma$, this is equal to
\[
\dot{\theta} = \frac{1}{2} \left( g( f J e_1, e_2) - g( f J e_2, e_1) \right) - \frac{1}{2} \left( g( \bar{f} \bar{J} \bar{e}_1, \bar{e}_2) - g( \bar{f} \bar{J} \bar{e}_2, \bar{e}_1) \right),
\]
where $J$ is the linear transformation that sends $(e_1,e_2)$ to $(e_2,-e_1)$, $\bar{J}$ is the linear transformation that sends $(\bar{e}_1,\bar{e}_2)$ to $(\bar{e}_2,-\bar{e}_1)$ (which equals $J$), and
\begin{align}
f &= \frac{1}{2}\sigma(e_1,e_2) -\dot{e}^2(e_1) = -\frac{1}{2}\sigma(e_1,e_2) + \dot{e}^1(e_2), \label{fdef} \\
\bar{f} &= \frac{1}{2}\sigma(\bar{e}_1,\bar{e}_2) - \dot{\bar{e}}^2(\bar{e}_1) = -\frac{1}{2}\sigma(\bar{e}_1,\bar{e}_2) + \dot{\bar{e}}^1(\bar{e}_2).
\end{align}
Simplifying, we get
\begin{align*}
\dot{\theta} 
&= f - \bar{f} \\
&= -\frac{1}{2}\sigma(e_1,e_2) + \frac{1}{2}\sigma(\bar{e}_1,\bar{e}_2) + \dot{e}^1(e_2) - \dot{\bar{e}}^1(\bar{e}_2).
\end{align*}
\end{proof}

}

\begin{figure}
\begin{center}
\begin{tikzpicture}[scale=1.3,dot/.style={fill,circle,inner sep=1.5pt}]
\path (80:3) coordinate (a) (20:3.5) coordinate (b) (0:0) node[dot,label=below left:$z$]{} coordinate (z) (-100:1) coordinate (e) (-160:1) coordinate (f) (80:2.0) coordinate (A) (20:1.5) coordinate (B) ($(z)!(A)!(B)$) (20:2.5) coordinate (Bt) (80:1.0) coordinate (At);
\draw[-]  (a) node [above left ]{$\mathcal{C}_1$} -- (e);
\draw[-]  (b) node [below right]{$\mathcal{C}_2$} -- (f);
\draw[very thick,->]  (A) -- node [below right]{$\tau_1$} (At);
\draw[very thick,->]  (B) -- node [above]{$\tau_2$} (Bt);
\tkzDefLine[orthogonal=through A](z,A)
\tkzDrawLine[very thick,add = -0.5 and -1.0,arrows=<-](A,tkzPointResult)
\tkzLabelLine[above](A,tkzPointResult){$n_1$}
\tkzDefLine[orthogonal=through B](z,B)
\tkzDrawLine[very thick,add = 0.65 and -1.0,arrows=<-](B,tkzPointResult)
\tkzLabelLine[below=2,right](B,tkzPointResult){$n_2$}
\path pic[draw, angle radius=9mm,"$\theta$",angle eccentricity=1.3] {angle = B--z--A};
\end{tikzpicture}
\end{center}
\caption{Configuration of the curves $\mathcal{C}_1$ and $\mathcal{C}_2$ in Proposition~\ref{prop:angledot}.}
\label{fig:curves}
\end{figure}
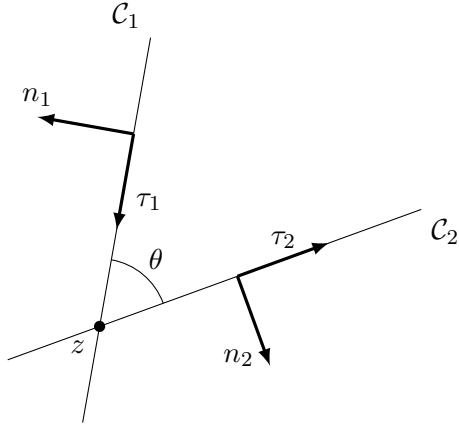

\begin{proposition} \label{prop:angledot} 
Let $\mathcal{C}_1$ and $\mathcal{C}_2$ be two curves in $M$ that intersect transversally at a point $z$.  If the metric $g$ evolves with time but $\mathcal{C}_1$ and $\mathcal{C}_2$ are fixed, then the angle $\theta$ between $\mathcal{C}_1$ and $\mathcal{C}_2$ at $z$ satisfies
\begin{equation} \label{angledot}
\frac{\partial}{\partial t}\theta = \frac{1}{2} \left( \sigma(\tau_2,n_2) - \sigma(\tau_1,n_1) \right),
\end{equation}
where $\sigma = \frac{\partial}{\partial t}g$ and $\tau_i,n_i$ are the unit tangent and unit normal vectors along $\mathcal{C}_i$.  Here, our convention is that $\cos\theta=-g(\tau_1,\tau_2)$, $g(n_1,\tau_2)<0$, and $(n_i,\tau_i)$ forms a right-handed frame for each $i$; see Figure~\ref{fig:curves}.
\end{proposition}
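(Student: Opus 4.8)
The plan is to reduce the statement to a one-variable computation by writing $\cos\theta = -g(\tau_1,\tau_2)$ and differentiating. Since the curves $\mathcal{C}_1,\mathcal{C}_2$ are fixed, the tangent directions are fixed, but the unit tangent vectors $\tau_i$ depend on $t$ through the normalization $g(\tau_i,\tau_i)=1$. Thus I would first parametrize each curve by a fixed (time-independent) parameter, let $v_i$ denote the corresponding time-independent velocity vector fields, and set $\tau_i = v_i / |v_i|_g$ where $|v_i|_g = \sqrt{g(v_i,v_i)}$. Differentiating $g(v_i,v_i)$ gives $\frac{\partial}{\partial t}|v_i|_g = \frac{\sigma(v_i,v_i)}{2|v_i|_g}$, hence $\dot\tau_i = -\frac{1}{2}\sigma(\tau_i,\tau_i)\,\tau_i$ after dividing through by $|v_i|$. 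In particular $\dot\tau_i$ is parallel to $\tau_i$, which will make the final algebra collapse.

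Next I would differentiate $\cos\theta = -g(\tau_1,\tau_2)$:
\begin{align*}
-\sin\theta\,\dot\theta &= -\dot g(\tau_1,\tau_2) - g(\dot\tau_1,\tau_2) - g(\tau_1,\dot\tau_2) \\
&= -\sigma(\tau_1,\tau_2) + \tfrac12\sigma(\tau_1,\tau_1)\,g(\tau_1,\tau_2) + \tfrac12\sigma(\tau_2,\tau_2)\,g(\tau_1,\tau_2).
\end{align*}
So $\dot\theta = \frac{1}{\sin\theta}\bigl(\sigma(\tau_1,\tau_2) - \cos\theta\,\sigma(\tau_1,\tau_1) \cdot \tfrac12 \cdot(-1) \cdots\bigr)$ — more cleanly, using $g(\tau_1,\tau_2) = -\cos\theta$, the right-hand side becomes $-\sigma(\tau_1,\tau_2) - \tfrac12\cos\theta\bigl(\sigma(\tau_1,\tau_1)+\sigma(\tau_2,\tau_2)\bigr)$, giving
\[
\dot\theta = \frac{1}{\sin\theta}\Bigl(\sigma(\tau_1,\tau_2) + \tfrac12\cos\theta\,\bigl(\sigma(\tau_1,\tau_1)+\sigma(\tau_2,\tau_2)\bigr)\Bigr).
\]
The remaining task is purely trigonometric/linear-algebraic: I must show this equals $\tfrac12(\sigma(\tau_2,n_2) - \sigma(\tau_1,n_1))$. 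For this I would expand $n_1,n_2$ in terms of $\tau_1,\tau_2$. Since $(n_1,\tau_1)$ is a right-handed orthonormal frame, $n_1 = J^{-1}\tau_1$ (clockwise rotation of $\tau_1$ by the ambient $90^\circ$ rotation), and similarly for $n_2$; writing $\tau_2 = -\cos\theta\,\tau_1 + s\,n_1$ for the appropriate sign $s=\pm1$ fixed by the sign convention $g(n_1,\tau_2)<0$ (which forces $s=-\sin\theta$ with my orientation bookkeeping), one expresses $n_2 = Jn_1$-type rotations of this, yielding $n_2 = -\cos\theta\,n_1 \mp \sin\theta\,\tau_1$ and symmetrically $n_1$ in terms of $\tau_2,n_2$. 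Substituting these expansions into $\tfrac12(\sigma(\tau_2,n_2)-\sigma(\tau_1,n_1))$ and collecting the coefficients of $\sigma(\tau_1,\tau_1)$, $\sigma(\tau_1,n_1)=\sigma(\tau_1,\tau_2$-combination$)$, etc., should reproduce the boxed formula after using $\cos\theta\sin\theta$ and $\sin^2\theta = 1-\cos^2\theta$ identities.

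The main obstacle is bookkeeping the sign conventions correctly: the proposition pins down three sign constraints ($\cos\theta=-g(\tau_1,\tau_2)$, $g(n_1,\tau_2)<0$, and $(n_i,\tau_i)$ right-handed), and getting the relative orientation of the two Frenet frames consistent with all three simultaneously — especially the direction in which $\mathcal{C}_2$ is "rotated" from $\mathcal{C}_1$ — is where an error is most likely to creep in. I would handle this by fixing an oriented orthonormal frame adapted to $\mathcal{C}_1$ at $z$ (say $n_1 = E_1$, $\tau_1 = E_2$) and writing everything in coordinates relative to it, so that $\tau_2 = (\sin\phi, -\cos\phi)$ for a single angle parameter $\phi$ tied to $\theta$ by the conventions, reducing all of the above to checking one scalar trigonometric identity. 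An alternative, possibly cleaner route is to invoke Proposition~\ref{prop:Adotdiv} or the angle-defect viewpoint directly, but the direct differentiation of $\cos\theta = -g(\tau_1,\tau_2)$ seems the most transparent and self-contained.
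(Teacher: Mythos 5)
Your proposal is correct and follows essentially the same route as the paper's proof: derive $\dot\tau_i = -\tfrac12\sigma(\tau_i,\tau_i)\tau_i$ from the unit-length constraint, differentiate $\cos\theta = -g(\tau_1,\tau_2)$ to obtain $\dot\theta\sin\theta = \sigma(\tau_1,\tau_2) + \tfrac12\cos\theta\,(\sigma(\tau_1,\tau_1)+\sigma(\tau_2,\tau_2))$, and then convert to the normal--tangent components via the expansions $\tau_2+\tau_1\cos\theta=-n_1\sin\theta$ and $\tau_1+\tau_2\cos\theta=n_2\sin\theta$, which are exactly the identities your sign bookkeeping would produce. The only part you leave as a sketch is this last linear-algebraic step, and your plan for it matches the paper's.
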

{\color{blue}
\begin{proof}
This is a special case of Proposition~\ref{prop:frameangledot}.  The angle $\theta$ between $\mathcal{C}_1$ and $\mathcal{C}_2$ at $z$ is the same as the angle between the frames $(\bar{e}_1,\bar{e}_2):=(n_2,\tau_2)$ and $(e_1,e_2) := (-n_1,-\tau_1)$ at $z$.  We have
\[
\dot{e}^1(e_2) = \frac{d}{dt}(e^1(e_2)) - e^1(\dot{e}_2) = -e^1(\dot{e}_2) = -g(e_1,\dot{e}_2) = -g(n_1,\dot{\tau}_1) =0
\]
since $\dot{\tau}_1$ is parallel to $\tau_1$.  Similarly, $\dot{\bar{e}}^1(\bar{e}_2)=0$, so~(\ref{angledot}) follows.
\end{proof}

\begin{proposition} \label{prop:dtheta} 
Let $(e_1,e_2)$ be a smoothly varying $g$-orthonormal frame field on a triangle $T$.  Along an edge $e \subset \partial T$ with outward unit normal vector $n$ and unit tangent vector $\tau$, let $\theta$ be the counterclockwise angle by which $(e_1,e_2)$ is rotated relative to $(n,\tau)$ at each point along $e$.  Then
\begin{equation} \label{thetadot}
\frac{\partial}{\partial t}\theta = \frac{1}{2}\sigma(n,\tau) + f
\end{equation}
and
\begin{equation} \label{dtheta}
d\theta(\tau) = A(\tau) - k,
\end{equation}
where $\sigma = \frac{\partial}{\partial t}g$, $f$ is given by~(\ref{gauge}), $A$ is the Levi-Civita connection one-form associated with $(e_1,e_2)$, and $k$ is the geodesic curvature of $e$. 
\end{proposition}
\begin{proof}
To compute the time rate of change of $\theta$, we apply Proposition~\ref{prop:frameangledot} with $(\bar{e}_1,\bar{e}_2)=(n,\tau)$.  As we saw in the proof of Proposition~\ref{prop:angledot}, we have $\dot{\bar{e}}^1(\bar{e}_2)=0$, so 
\begin{align*}
\dot{\theta} 
&= -\frac{1}{2}\sigma(e_1,e_2) + \frac{1}{2}\sigma(\bar{e}_1,\bar{e}_2) + \dot{e}^1(e_2) \\
&= \frac{1}{2} \sigma(n,\tau) + f.
\end{align*}

Next we compute $d\theta(\tau)$.  Differentiating the relation $\cos\theta = g(e_1,n)$ in the direction $\tau$ yields
\begin{align*}
-\sin\theta \, d\theta(\tau) 
&= \nabla_\tau \left( g(e_1,n) \right), \\
&= g(\nabla_\tau e_1, n) + g( e_1, \nabla_\tau n ) \\
&= g(A(\tau)e_2, n) + g(e_1, \nabla_\tau n) \\
&= -A(\tau)\sin\theta + g(e_1, \nabla_\tau n).
\end{align*}
Writing $e_1 = n \cos\theta + \tau \sin\theta$ and noting that $g(n,\nabla_\tau n) = \frac{1}{2} \nabla_\tau \left( g(n,n) \right) = 0$, we find that
\begin{align*}
-\sin\theta \, d\theta(\tau) 
&= -A(\tau)\sin\theta + g(\tau, \nabla_\tau n)\sin\theta \\
&= -A(\tau)\sin\theta + k\sin\theta.
\end{align*}
It follows that~(\ref{dtheta}) holds at all points on $e$ where $\sin\theta \neq 0$.  In the event that $\sin\theta=0$ on a subset $\widetilde{e} \subset e$ with positive length, we have $(e_1,e_2)=(\pm n, \pm\tau)$ and $A(\tau)=g(\nabla_{\tau} n, \tau) = k$ on $\widetilde{e}$, so~(\ref{dtheta}) holds on $\widetilde{e}$ with both sides of the equation equaling zero.  We conclude that~(\ref{dtheta}) holds everywhere on $e$ by continuity.
\end{proof}

}

\section{Distributional curvature} \label{sec:distcurv}

In this section, we give meaning to the distributional curvature of a Regge metric.

\paragraph{Notation.}

Let $\mathcal{S}$ be a triangulation of an oriented manifold of dimension $d=2$.  
Let $\mathcal{T}_h$, $\mathcal{E}_h$, and $\mathcal{V}_h$ denote the set of triangles, edges, and vertices, respectively, that comprise $\mathcal{S}$.  For the moment, the subscript $h$ serves no purpose other than to label discrete objects, but later it will be useful to consider families of triangulations parametrized by $h = \max_{T \in \mathcal{T}_h} \diam(T)$.  Let $\mathcal{V}_h^0 \subseteq \mathcal{V}_h$ denote the set of vertices $z \in \mathcal{V}_h$ that do not lie on $\partial \mathcal{S}$, and let $\mathcal{E}_h^0 \subseteq \mathcal{E}_h$ denote the set of interior edges---edges with at least one endpoint in $\mathcal{V}_h^0$.

On a triangle $T$, let $\mathcal{M}(T)$ denote the space of smooth Riemannian metrics on $T$.  {\color{blue}We think of elements of $\mathcal{M}(T)$ as symmetric $(0,2)$-tensor fields that are positive definite everywhere in $T$ and have smooth components.}   The space of Regge metrics is
\[
\mathcal{M} = \{ g \in \prod_{T \in \mathcal{T}_h} \mathcal{M}(T) \mid i_{T_1,e}^* g_{T_1} = i_{T_2,e}^* g_{T_2} \, \forall e = T_1 \cap T_2 \in \mathcal{E}_h^0  \},
\]
where $i_{T_j,e}^*$ denotes the pullback under the inclusion $i_{T_j,e} : e \xhookrightarrow{} T_j$.  In other words, the tangential-tangential component of a Regge metric $g$ along each interior edge $e$ is continuous.
For us, the most important Regge metrics are those that are piecewise polynomial.  When referring to the value of $g$ on a triangle $T \in \mathcal{T}_h$, we write either $g_T$ or (if there is no danger of confusion) simply $g$.  The Gaussian curvature $\kappa$ of $g_T$ is denoted
\[
\kappa_T(g) = \kappa(g_T).
\]

On each edge $e$ of $T$, we let $\tau$ and $n$ denote the unit tangent and unit normal with respect to $g_T$.  We assume that $n$ points outward and $(n,\tau)$ forms a right-handed frame.  The $L^2$-inner product of two scalar functions $u$ and $v$ on $e$ with respect to $g_T$ is denoted
\[
\langle u, v \rangle_{g,e} = \int_e u v \, \ds.
\]
Note that this integral does not depend on the triangle $T \supset e$ under consideration, since the tangential-tangential component of $g$ is the same on both sides of $e$.  
The geodesic curvature of $e$ is
\[
k_e(g_T) = -g_T(n,\nabla_\tau \tau).
\]
In general, when $e \in \mathcal{E}_h^0$ lies on the boundary of two triangles $T_1$, $T_2$, the geodesic curvature of $e$ measured by $g_{T_1}$ need not agree with the geodesic curvature of $e$ measured by $g_{T_2}$.  For such an edge $e$, we denote
\[
\llbracket k_e(g) \rrbracket = k_e( g_{T_1} ) + k_e( g_{T_2} ).
\]
We use similar notation for the jumps in other quantities across edges.  Thus, for example, if $v$ is a function whose normal derivatives along $e$ are well-defined, then we denote the jump in $\nabla_n v$ across $e$ by
\[
\left\llbracket \nabla_n v \right\rrbracket = \left. (\nabla_n v) \right|_{T_1} + \left. (\nabla_n v) \right|_{T_2}.
\]
If $e \in \mathcal{E}_h \setminus \mathcal{E}_h^0$, then we set
\[
\left\llbracket \nabla_n v \right\rrbracket = \nabla_n v
\]
along $e$.

At a vertex $z \in \mathcal{V}_h^0$, the angle defect at $z$ is
\[
\Theta_z(g) = 2\pi - \sum_{T \in \, \Omega_z} \theta_{zT},
\]
where $\, \Omega_z$ is the set of triangles in $\mathcal{T}_h$ sharing the vertex $z$, and $\theta_{zT}$ is the interior angle of $T$ at $z$ as measured by $g_T$.

\paragraph{Curvature.}
We wish to give meaning to the distributional curvature two-form associated with a Regge metric.  To do so, we introduce a space of functions on which this distribution will act.  Although the space of continuous functions would suffice, we prefer to use a Sobolev space to facilitate the analysis in Section~\ref{sec:conv}.

On a triangle $T$, we let $W^{k,p}(T)$ denote the Sobolev space of differentiability index $k \in \mathbb{N}_0$ and integrability index $p \in [1,\infty]$.  We write $H^k(T) = W^{k,2}(T)$.  
For functions $v \in H^k(T)$ with $k \ge 1$, the trace of $v$ on any edge $e \subset \partial T$ is well-defined.  We denote
\[
V = \left\{ v \in \prod_{T \in \mathcal{T}_h} H^2(T) \mid \left.v_{T_1}\right|_e  = \left.v_{T_2}\right|_e \, \forall e = T_1 \cap T_2 \in \mathcal{E}_h^0, \, \left.v\right|_{\partial \mathcal{S}} = 0 \right\}.
\]
Note that the Sobolev embedding theorem guarantees that elements of $H^2(T)$ are continuous, so $v(z)$ is well-defined and single-valued at each vertex $z \in \mathcal{V}_h$ when $v \in V$.  We use the notation $V'$ to refer to the dual space of $V$.

We are now ready to define the distributional curvature two-form associated with $g$.
\begin{definition} \label{def:distcurv}
Let $g$ be a Regge metric.  The \emph{distributional curvature two-form} associated with $g$ is the linear functional $\distcurv(g) \in V'$ defined by
\begin{equation} \label{distcurv}
\langle \distcurv(g), v \rangle_{V',V} = \sum_{T \in \mathcal{T}_h} \langle \kappa_T(g), v \rangle_{g,T} + \sum_{e \in \mathcal{E}_h^0} \langle \llbracket k_e(g) \rrbracket, v \rangle_{g,e} + \sum_{z \in \mathcal{V}_h^0} \Theta_z(g) v(z)
\end{equation}
for every $v \in V$.
\end{definition}

To motivate this definition, notice that when $g$ is a piecewise constant Regge metric, only the third term in~(\ref{distcurv}) is present, so we recover the standard definition of the curvature two-form on triangulations: a summation of Dirac delta distributions supported at the vertices $z \in \mathcal{V}_h^0$, each weighted by the angle defect at $z$.  When $g$ varies within each triangle $T$, we encounter additional contributions from the first two terms in~(\ref{distcurv}).  The role of the first term is self-evident.  To understand the term involving $\llbracket k_e(g) \rrbracket$, consider a thin four-sided region $U$ enclosing a portion $\widetilde{e}$ of an edge $e \in \mathcal{E}_h^0$, two of whose sides consist of points having geodesic distance $\epsilon/2$ from $\widetilde{e}$, and two of whose sides are (non-smooth) geodesics of length $\epsilon$ that intersect $e$ orthogonally with respect to $g$, as in Figure~\ref{fig:edge}.  If $g$ were smooth, then the Gauss-Bonnet theorem would yield $\int_U \kappa \, \omega + \int_{\partial U} k \, ds = 2\pi$.  For small $\epsilon$, we have $\int_{\partial U} k \, ds \approx -\int_{\widetilde{e}} \llbracket k_e(g) \rrbracket \, ds + 2\pi$, where the second term comes from summing the angles of the four corners of $U$.  Hence, $\int_U \kappa \, \omega \approx \int_{\widetilde{e}} \llbracket k_e(g) \rrbracket \, ds$, which suggests that the curvature two-form should include a Dirac delta distribution supported on $e$ and weighted by $\llbracket k_e(g) \rrbracket$ whenever $\llbracket k_e(g) \rrbracket$ is nonzero.  In Section~\ref{sec:linearization} we give a more systematic justification of Definition~\ref{def:distcurv} by showing that the linearization of $\distcurv(g)$ around a given Regge metric $g$ is precisely the linearized curvature operator, interpreted in a distributional sense.  {\color{blue}See also~\cite{strichartz2020defining} for another justification of Definition~\ref{def:distcurv}.}

\begin{figure}
\centering
\begin{tikzpicture}[
  triangle/.style={
   regular polygon,
   regular polygon sides=3,
   minimum size=3cm,
   draw}
]
\node [triangle,rotate=30] (a) {};
\node [triangle,rotate=210,left=1.5cm of a] (b) {};
\draw[draw=black,fill=gray,fill opacity=0.3] (-0.9,-0.75) rectangle ++(0.3,1.5);
\end{tikzpicture}
\caption{Region containing a portion of an edge $e \in \mathcal{E}_h^0$.}
\label{fig:edge}
\end{figure}
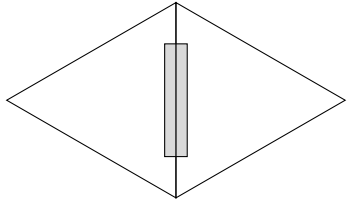

\section{Linearization of the distributional curvature} \label{sec:linearization}

Our next goal is to determine how the distributional curvature two-form evolves under deformations of the metric.

We will prove the following result.
\begin{theorem} \label{thm:main}
Let $g(t)$ be a Regge metric depending smoothly on $t$.  Then
\[
\frac{d}{dt} \langle \distcurv(g), v \rangle_{V',V} = \frac{1}{2} b_h(g; \sigma, v)
\]
for every $v \in V$, where $\sigma = \frac{\partial}{\partial t} g$ and
\begin{equation} \label{bh}
b_h(g;\sigma,v) = \sum_{T \in \mathcal{T}_h} \langle S\sigma, \Hess v \rangle_{g,T} + \sum_{e \in \mathcal{E}_h} \left\langle \sigma(\tau,\tau), \left\llbracket \nabla_n v \right\rrbracket \right\rangle_{g,e}.
\end{equation}
\end{theorem}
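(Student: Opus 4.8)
The plan is to differentiate each of the three terms in Definition~\ref{def:distcurv} separately, using the evolution formulas from Section~\ref{sec:evolution}, and then reassemble the pieces via integration by parts on each triangle. For the first term, Proposition~\ref{prop:kappavoldot} gives $\frac{\partial}{\partial t}(\kappa_T\,\omega) = \frac{1}{2}(\dv\dv S\sigma)\,\omega$ on each triangle, so $\frac{d}{dt}\langle\kappa_T(g),v\rangle_{g,T} = \frac{1}{2}\int_T (\dv\dv S\sigma)\,v\,\omega$. I would integrate by parts twice on $T$: the first integration by parts produces $-\int_T \langle \dv S\sigma, \nabla v\rangle\,\omega$ plus a boundary term $\int_{\partial T}(\dv S\sigma)(n)\,v\,\ds$, and the second produces $\int_T \langle S\sigma, \Hess v\rangle\,\omega$ plus a boundary term $-\int_{\partial T}\langle S\sigma, n\otimes\nabla v\rangle\,\ds$. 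The volume term $\int_T\langle S\sigma,\Hess v\rangle$ is exactly what appears in $b_h$, so the remaining task is to show that all the boundary contributions, summed over triangles, combine with the derivatives of the edge and vertex terms to give the edge sum $\sum_{e}\langle\sigma(\tau,\tau),\llbracket\nabla_n v\rrbracket\rangle_{g,e}$ in~\eqref{bh}.

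For the second term, I would differentiate $\sum_e\langle\llbracket k_e(g)\rrbracket,v\rangle_{g,e}$ using Proposition~\ref{prop:klengthdot}. Care is needed because both $k_e$ and the length form $\ds$ depend on $t$; Proposition~\ref{prop:klengthdot} conveniently packages this as the derivative of $k\,\ds$, giving $\frac{\partial}{\partial t}(k_e\,\ds) = -\frac{1}{2}((\dv S\sigma)(n) + \nabla_\tau(\sigma(n,\tau)))\,\ds$ on each side of $e$. Summing the two sides (recalling the jump convention $\llbracket\cdot\rrbracket$ adds the two triangles' values, with the normals pointing in opposite directions) and then integrating the $\nabla_\tau(\sigma(n,\tau))$ term by parts along the edge $e$ — which is a closed loop or has endpoints at vertices — will produce $\frac{1}{2}\nabla_\tau v\,\sigma(n,\tau)$ integrated over $e$ plus endpoint contributions at the vertices. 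For the third term, I would use Proposition~\ref{prop:angledot}: each interior angle $\theta_{zT}$ evolves, and summing $-\sum_{T\ni z}\dot\theta_{zT}$ over the triangles around $z$ telescopes the $\sigma(\tau,n)$ contributions along the edges incident to $z$, leaving a vertex term that should cancel the endpoint contributions coming from the edge integration by parts.

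The main obstacle — and the bulk of the work — is the bookkeeping that shows the boundary terms from the triangle-wise double integration by parts, the edge-wise integration by parts of the geodesic-curvature term, and the telescoping angle-defect term all combine correctly. Concretely, on each edge $e = T_1\cap T_2$ one must match: (i) the $\int_{\partial T}(\dv S\sigma)(n)\,v$ terms against the $-\frac12(\dv S\sigma)(n)$ piece of $\frac{\partial}{\partial t}(k_e\,\ds)$ — these should cancel, since $v$ is single-valued across $e$ while $n$ flips sign, contributing $\langle v,(\dv S\sigma)(n)\rangle$ from each side that must be reconciled with the factor from the curvature term; (ii) the $-\int_{\partial T}\langle S\sigma, n\otimes\nabla v\rangle$ terms, which I would decompose $\nabla v = (\nabla_n v)n + (\nabla_\tau v)\tau$ and use $\langle S\sigma, n\otimes n\rangle = \sigma(n,n) - \Tr\sigma = -\sigma(\tau,\tau)$ and $\langle S\sigma, n\otimes\tau\rangle = \sigma(n,\tau)$ (since $g(n,\tau)=0$) — the first of these is where the desired $\sigma(\tau,\tau)\llbracket\nabla_n v\rrbracket$ term comes from; (iii) the tangential pieces $\sigma(n,\tau)\nabla_\tau v$ from (ii) must cancel against the tangential integration-by-parts term from the geodesic-curvature contribution. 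Finally one checks the vertex identity: the endpoint terms left over from integrating $\nabla_\tau(\sigma(n,\tau))$ by parts along each edge, collected around a vertex $z$, must exactly match $-\sum_{T\ni z}\frac{d}{dt}\theta_{zT}\cdot v(z)$ as given by Proposition~\ref{prop:angledot}; this uses that $v(z)$ is single-valued and that the $\sigma(\tau_i,n_i)$ contributions telescope around $z$. A subtlety to handle throughout is the sign and frame conventions — the $(n,\tau)$ right-handed convention on edges, the direction of $\tau$ along $e$ as seen from $T_1$ versus $T_2$, and the Hodge-star rotation convention — which must be tracked consistently so that the jump brackets come out with the stated signs. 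Boundary edges in $\mathcal{E}_h\setminus\mathcal{E}_h^0$ require separate attention, but the convention $v|_{\partial\mathcal{S}} = 0$ and the extended definition of $\llbracket\nabla_n v\rrbracket$ there make those terms vanish or collapse appropriately.
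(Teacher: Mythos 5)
Your proposal is correct and takes essentially the same route as the paper: differentiate the three terms of Definition~\ref{def:distcurv} using Propositions~\ref{prop:kappavoldot}, \ref{prop:klengthdot}, and~\ref{prop:angledot}, then reconcile the result with $b_h$ via elementwise double integration by parts, the decomposition $\nabla v = (\nabla_n v)n + (\nabla_\tau v)\tau$ together with $S\sigma(n,n)=-\sigma(\tau,\tau)$ and $S\sigma(n,\tau)=\sigma(n,\tau)$, edgewise integration by parts of the $\nabla_\tau(\sigma(n,\tau))$ term, and cancellation of the resulting endpoint contributions against the angle-defect derivative. The paper merely organizes the same bookkeeping in the opposite direction, packaging it as Proposition~\ref{prop:bhrewritten} (integrating $b_h$ by parts down to the jump and vertex terms rather than building $b_h$ up from them), so the identities and cancellations you describe are exactly those carried out there.
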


The theorem above has important ramifications, since it allows one to analyze the distributional curvature $\distcurv(g)$ by analyzing the $g$-dependent bilinear form $b_h(g; \cdot,\cdot)$.  This bilinear form is well-studied in the finite element literature: it is a non-Euclidean generalization of the bilinear form used to discretize the Euclidean $\dv\dv$ operator in the Hellan-Herrmann-Johnson finite element method.  Hence, classical techniques from finite element theory can be brought to bear.  We use this observation in Section~\ref{sec:conv} to analyze the convergence of $\distcurv(g)$ under refinement.

In our study of $b_h(g;\sigma,v)$ below, we will primarily deal with the setting in which $g$ is a time-dependent Regge metric with time derivative $\sigma$.  In particular, $\sigma$ will be piecewise smooth.  But for future reference, we remark that for any Regge metric $g$, $b_h(g; \cdot, \cdot)$ extends to a bounded bilinear form on $\Sigma \times V$, where
\[
\Sigma = \left\{ \sigma \in \prod_{T \in \mathcal{T}_h} H^1 S_2^0(T) \mid i_{T_1,e}^* \sigma_{T_1} = i_{T_2,e}^* \sigma_{T_2} \, \forall e = T_1 \cap T_2 \in \mathcal{E}_h^0 \right\}.
\]
Here, $H^k S_2^0(T)$ denotes the space of all symmetric $(0,2)$-tensor fields on $T$ with coefficients in $H^k(T)$.

\subsection{Proof of Theorem~\ref{thm:main}}

Let us prove Theorem~\ref{thm:main}.
We first use Propositions~\ref{prop:kappavoldot},~\ref{prop:klengthdot}, and~\ref{prop:angledot} to differentiate the three terms on the right-hand side of~(\ref{distcurv}). We get
\begin{align}
\frac{d}{dt} \langle \kappa_T(g),v \rangle_{g,T} &= \frac{d}{dt} \int_T v \kappa_T(g) \, \omega = \frac{1}{2} \int_T  v (\dv \dv S \sigma) \, \omega = \frac{1}{2} \langle \dv \dv S \sigma, v \rangle_{g,T}, \label{term1dot}
\end{align}
\begin{align}
\frac{d}{dt} \langle \llbracket k_e(g) \rrbracket, v \rangle_{g,e} = \frac{d}{dt} \int_e v  \llbracket  k_e(g) \rrbracket \ds &= -\frac{1}{2} \int_e v \left\llbracket (\dv S \sigma)(n) + \nabla_\tau (\sigma(n,\tau)) \right\rrbracket \ds \nonumber \\
&= -\frac{1}{2} \left\langle \left\llbracket (\dv S \sigma)(n) + \nabla_\tau (\sigma(n,\tau)) \right\rrbracket, v \right\rangle_{g,e}, \label{term2dot}
\end{align}
and
\begin{equation}
\frac{d}{dt} \Theta_z(g) v(z) = - v(z) \frac{d}{dt} \sum_{T \in \Omega_z} \theta_{zT} = \frac{1}{2} v(z) \sum_{T \in \Omega_z} \llbracket \sigma(n,\tau) \rrbracket_{zT}, \label{term3dot}
\end{equation}
where
\begin{equation} \label{jumpvertex}
\llbracket \sigma(n,\tau) \rrbracket_{zT} = \left. \sigma(n,\tau) \right|_{e^{(1)}}(z) - \left. \sigma(n,\tau) \right|_{e^{(2)}}(z)
\end{equation}
and $e^{(1)},e^{(2)} \subset \partial T$ are the two edges of $T$ having an endpoint at $z$.  Here, our convention is that $e^{(1)}$ and $e^{(2)}$ are ordered so that the unit tangent vector along $e^{(1)}$ points toward $z$ and the unit tangent vector along $e^{(2)}$  points away from $z$.

Next we show that these are precisely the quantities that appear if one integrates $b_h$ by parts.

\begin{proposition} \label{prop:bhrewritten}
For any $(g,\sigma,v) \in \mathcal{M} \times \Sigma \times V$ satisfying $\left.\sigma\right|_T \in H^2 S_2^0(T)$ for every $T \in \mathcal{T}_h$, we have
\begin{equation} \label{bhrewritten}
\begin{split}
b_h(g;\sigma,v) &= \sum_{T \in \mathcal{T}_h} \langle \dv \dv S \sigma, v \rangle_{g,T} \\&\;\;\; - \sum_{e \in \mathcal{E}_h^0} \left\langle \left\llbracket (\dv S \sigma)(n) + \nabla_\tau (\sigma(n,\tau)) \right\rrbracket, v \right\rangle_{g,e} \\&\;\;\;
+ \sum_{z \in \mathcal{V}_h^0} v(z) \sum_{T \in \, \Omega_z} \llbracket \sigma(n,\tau) \rrbracket_{zT}.
\end{split}
\end{equation}
\end{proposition}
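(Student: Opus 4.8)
The plan is to obtain \eqref{bhrewritten} by integrating the first sum in the definition \eqref{bh} of $b_h$ by parts twice, triangle by triangle, and then observing that the resulting boundary terms involving $\nabla_n v$ are precisely the negative of the second sum in \eqref{bh}, so that they cancel when that sum is added back. Since everything is done one triangle at a time, the only analytic tools needed are the standard Riemannian divergence theorems on a triangle with boundary. The hypothesis $\left.\sigma\right|_T \in H^2 S_2^0(T)$ is exactly what makes this legitimate: it gives $\dv\dv S\sigma \in L^2(T)$, well-defined edge traces of $(\dv S\sigma)(n)$, of $\sigma(n,\tau)$ and its tangential derivative, and of $\nabla_n v$ (with $v \in H^2(T)$), and well-defined vertex values of $\sigma(n,\tau)$ via $H^2(T) \hookrightarrow C^0(\overline{T})$.

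Fix $T \in \mathcal{T}_h$. The first integration by parts moves a derivative off $v$,
\[
\langle S\sigma, \Hess v\rangle_{g,T} = -\langle \dv S\sigma, \nabla v\rangle_{g,T} + \int_{\partial T} (S\sigma)(n,\nabla v)\,\ds,
\]
and the second one, applied to the interior term on the right, gives
\[
-\langle \dv S\sigma, \nabla v\rangle_{g,T} = \langle \dv\dv S\sigma, v\rangle_{g,T} - \int_{\partial T} v\,(\dv S\sigma)(n)\,\ds .
\]
On each edge $e \subset \partial T$, I decompose the gradient in the $g_T$-orthonormal frame $(n,\tau)$ as $\nabla v = (\nabla_n v)n + (\nabla_\tau v)\tau$, and use $g(n,\tau)=0$ and $\Tr\sigma = \sigma(n,n)+\sigma(\tau,\tau)$ to get the pointwise identities $(S\sigma)(n,n) = -\sigma(\tau,\tau)$ and $(S\sigma)(n,\tau) = \sigma(n,\tau)$, hence $(S\sigma)(n,\nabla v) = (\nabla_\tau v)\,\sigma(n,\tau) - (\nabla_n v)\,\sigma(\tau,\tau)$. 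This yields the per-triangle identity
\[
\langle S\sigma, \Hess v\rangle_{g,T} = \langle \dv\dv S\sigma, v\rangle_{g,T} - \int_{\partial T} v\,(\dv S\sigma)(n)\,\ds - \int_{\partial T} (\nabla_n v)\,\sigma(\tau,\tau)\,\ds + \int_{\partial T} (\nabla_\tau v)\,\sigma(n,\tau)\,\ds .
\]

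Next I sum over $T \in \mathcal{T}_h$ and collect the four kinds of terms. The interior terms give $\sum_T \langle \dv\dv S\sigma, v\rangle_{g,T}$, the first line of \eqref{bhrewritten}. On each interior edge $e = T_1 \cap T_2$ the two triangles contribute outward normals pointing to opposite sides, so the $(\dv S\sigma)(n)$ terms assemble into $-\sum_{e \in \mathcal{E}_h^0} \langle \llbracket (\dv S\sigma)(n)\rrbracket, v\rangle_{g,e}$; on boundary edges these vanish because $v=0$. Since $\sigma \in \Sigma$ has single-valued tangential–tangential trace, $\sigma(\tau,\tau)$ is single-valued on each edge, so the $\sigma(\tau,\tau)\nabla_n v$ terms assemble into $-\sum_{e \in \mathcal{E}_h} \langle \sigma(\tau,\tau), \llbracket \nabla_n v\rrbracket\rangle_{g,e}$ (with the convention $\llbracket \nabla_n v\rrbracket = \nabla_n v$ on boundary edges), which is exactly minus the second sum in \eqref{bh} and therefore cancels it once we add that sum back to form $b_h$. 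Finally, each $\int_{\partial T}(\nabla_\tau v)\,\sigma(n,\tau)\,\ds$ is integrated by parts along each edge $e$ (treating $\sigma(n,\tau)$ as a scalar function of arc length): since $\tau$ reverses across $e$, the interior terms combine into $-\sum_{e \in \mathcal{E}_h^0} \langle \llbracket \nabla_\tau(\sigma(n,\tau))\rrbracket, v\rangle_{g,e}$ (boundary edges again dropping out), and regrouping the endpoint terms by vertex rather than by edge—using the stated ordering of the two edges $e^{(1)},e^{(2)}$ of $T$ at $z$ (tangent into $z$ along $e^{(1)}$, out of $z$ along $e^{(2)}$)—the endpoint contributions of $T$ at $z$ add up to $v(z)\,\llbracket\sigma(n,\tau)\rrbracket_{zT}$ as in \eqref{jumpvertex}, leaving $\sum_{z\in\mathcal{V}_h^0} v(z)\sum_{T\in\Omega_z}\llbracket\sigma(n,\tau)\rrbracket_{zT}$ after noting $v(z)=0$ at boundary vertices. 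Merging the two interior-edge contributions into $-\sum_{e\in\mathcal{E}_h^0}\langle\llbracket(\dv S\sigma)(n)+\nabla_\tau(\sigma(n,\tau))\rrbracket,v\rangle_{g,e}$ produces \eqref{bhrewritten}.

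The two Riemannian integrations by parts and the tensor algebra for $S\sigma$ are routine. The step demanding care is the bookkeeping of signs and jump conventions on edges and at vertices: keeping straight that $\tau$ and the one-sided normals reverse between the two triangles sharing an edge, that $\sigma(\tau,\tau)$ is single-valued while $\sigma(n,\tau)$, $(\dv S\sigma)(n)$, and $\nabla_n v$ are not, and—for the vertex terms—recognizing that reorganizing the boundary terms of the edgewise integration by parts from an ``edge'' grouping to a ``triangle–vertex'' grouping is precisely what produces the quantities $\llbracket\sigma(n,\tau)\rrbracket_{zT}$ in the statement.
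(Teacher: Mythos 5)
Your proof is correct and follows essentially the same route as the paper's: two Riemannian integrations by parts on each triangle, the identities $(S\sigma)(n,n)=-\sigma(\tau,\tau)$ and $(S\sigma)(n,\tau)=\sigma(n,\tau)$ to cancel the $\sigma(\tau,\tau)\nabla_n v$ edge terms against the second sum in \eqref{bh}, an edgewise integration by parts of the $\sigma(n,\tau)\nabla_\tau v$ terms, and a regrouping of the endpoint contributions by vertex to produce $\llbracket\sigma(n,\tau)\rrbracket_{zT}$. The only difference is an immaterial reordering (you perform both integrations by parts per triangle before assembling, while the paper cancels the $\sigma(\tau,\tau)\nabla_n v$ term first), and your sign/jump bookkeeping matches the paper's one-sided-sum convention for $\llbracket\cdot\rrbracket$.
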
 
\begin{remark}
The Euclidean version of this identity appears in~\cite[Equation 2.4]{comodi1989hellan} and~\cite[Lemma 2.1]{chen2021finite}.
\end{remark}
\begin{proof}[Proof {\color{blue}of Proposition~\ref{prop:bhrewritten}}]
We start by rewriting the integrals over $e \in \mathcal{E}_h$ in~(\ref{bh}) as integrals over triangle boundaries:
\begin{equation} \label{bhrewrite}
b_h(g;\sigma,v) = \sum_{T \in \mathcal{T}_h} \left( \int_T \langle S\sigma, \Hess v \rangle \, \omega + \int_{\partial T} \sigma(\tau,\tau) \nabla_n v \, \ds \right).
\end{equation}
Next, by thinking of $\nabla v$ as a vector field and using $S\sigma\nabla v$ to denote the one-form $X \mapsto S\sigma(\nabla v, X)$, we use the identity
\[
\dv(S\sigma \nabla v) = (\dv S \sigma)(\nabla v) + \langle S \sigma, \Hess v \rangle
\]
and the divergence theorem to write
\begin{align}
\int_T \langle S\sigma, \Hess v \rangle \, \omega 
&= \int_T \dv(S\sigma \nabla v) \, \omega - \int_T (\dv S \sigma)(\nabla v) \, \omega \nonumber \\
&= \int_{\partial T} S\sigma(n, \nabla v) \, \ds  - \int_T (\dv S \sigma)(\nabla v) \, \omega. \label{SHess}
\end{align}
Since $\nabla v = \tau \nabla_\tau v + n \nabla_n v$, we have
\begin{align}
\int_{\partial T} S\sigma(n, \nabla v) \, \ds
&= \int_{\partial T} S\sigma(n,\tau) \nabla_\tau v \, \ds + \int_{\partial T} S\sigma(n,n) \nabla_n v \, \ds \nonumber \\
&= \int_{\partial T} \sigma(n,\tau) \nabla_\tau v \, \ds - \int_{\partial T} \sigma(\tau,\tau) \nabla_n v \, \ds. \label{Ssigma}
\end{align}
In the second line above, we used the identities
\begin{align*}
S\sigma(n,\tau) &= \sigma(n,\tau) - (\Tr \sigma) g(n,\tau) = \sigma(n,\tau), \\
S\sigma(n,n) &= \sigma(n,n) - (\Tr \sigma)g(n,n) = \sigma(n,n) - (\sigma(\tau,\tau)+\sigma(n,n)) = -\sigma(\tau,\tau).
\end{align*}
It follows from~(\ref{bhrewrite}),~(\ref{SHess}), and~(\ref{Ssigma}) that
\begin{equation} \label{bhgradv}
b_h(g; \sigma, v) = \sum_{T \in \mathcal{T}_h} \left( -\int_T (\dv S \sigma)(\nabla v) \, \omega + \int_{\partial T} \sigma(n,\tau) \nabla_\tau v \, \ds \right).
\end{equation}
Next, we use the integration by parts formula
\begin{align*}
\int_T (\dv S \sigma)(\nabla v) \, \omega
&= \int_{\partial T} (\dv S \sigma)(n) v  \, \ds - \int_T (\dv \dv S \sigma) v \, \omega
\end{align*}
to write
\[
b_h(g;\sigma,v) = \sum_{T \in \mathcal{T}_h} \bigg( \int_T (\dv \dv S \sigma) v \, \omega - \int_{\partial T} (\dv S \sigma)(n) v  \, \ds +  \int_{\partial T} \sigma(n,\tau) \nabla_\tau v \, \ds \bigg).
\]
On each triangle $T$, the last term above involves integrals over the three edges of $T$, each of which can be integrated by parts to give
\begin{align*}
\int_e \sigma(n,\tau) \nabla_\tau v \, \ds 
&= \left. \sigma(n,\tau)v \right|_{z^{(1)}}^{z^{(2)}} - \int_e \nabla_\tau (\sigma(n,\tau)) v \, \ds.
\end{align*}
Here, $z^{(1)}$ and $z^{(2)}$ are the endpoints of $e$, ordered so that $\tau$ points from $z^{(1)}$ to $z^{(2)}$.
It follows that
\[\begin{split}
b_h(g;\sigma,v) = \sum_{T \in \mathcal{T}_h} \int_T (\dv \dv S \sigma) v \, \omega - \sum_{e \in \mathcal{E}_h} \int_e \left\llbracket (\dv S \sigma)(n) + \nabla_\tau (\sigma(n,\tau)) \right\rrbracket v \, \ds \\
+ \sum_{z \in \mathcal{V}_h} v(z) \sum_{T \in \, \Omega_z} \llbracket \sigma(n,\tau) \rrbracket_{zT}.
\end{split}\]
Finally, since $v$ vanishes on $\partial \mathcal{S}$, the sums over $e \in \mathcal{E}_h$ and $z \in \mathcal{V}_h$ can be replaced by sums over $e \in \mathcal{E}_h^0$ and $z \in \mathcal{V}_h^0$.  
This completes the proof of~(\ref{bhrewritten}).
\end{proof}

Theorem~\ref{thm:main} follows from comparing~(\ref{term1dot}-\ref{term3dot}) with~(\ref{bhrewritten}).

\begin{table}
  \centering
  \color{dgreen}
  \begin{tabular}{cp{4.3in}}
    \renewcommand{\arraystretch}{1.2}
    $\mathcal M$&Regge metrics: metrics which are smooth on each triangle and have tangential-tangential interelement continuity.\\
    $V$&Scalar functions with interelement continuity and vanishing on the boundary:\newline$\left\{ v \in \prod_{T \in \mathcal{T}_h} H^2(T) \mid \left.v_{T_1}\right|_e  = \left.v_{T_2}\right|_e \, \forall e = T_1 \cap T_2 \in \mathcal{E}_h^0, \, \left.v\right|_{\partial \mathcal{S}} = 0 \right\}$.\\
    $\Sigma$&Symmetric $(0,2)$-tensor fields with tangential-tangential interelement continuity:\newline$\left\{ \sigma \in \prod_{T \in \mathcal{T}_h} H^1 S_2^0(T) \mid i_{T_1,e}^* \sigma_{T_1} = i_{T_2,e}^* \sigma_{T_2} \, \forall e = T_1 \cap T_2 \in \mathcal{E}_h^0 \right\}$.\\
    $W$&One-forms with tangential interelement continuity and tangential vanishing on the boundary:\newline{$\begin{aligned}\textstyle\Bigl\{ \alpha \in \prod_{T \in \mathcal{T}_h} H^1\Lambda^1(T) \mid &i_{T_1,e}^* \alpha_{T_1} = i_{T_2,e}^* \alpha_{T_2} \, \forall e = T_1 \cap T_2 \in \mathcal{E}_h^0,\\[-.5\baselineskip] &\qquad i_{T,e}^* \alpha_T = 0 \,\forall e \in \mathcal{E}_h \setminus \mathcal{E}_h^0, T \supset e \qquad \Bigr\}.\end{aligned}$}\\
    $X$&Two-forms {\color{blue}with vanishing mean:\newline$X = \left\{ F \in \prod_{T \in \mathcal{T}_h} L^2\Lambda^2(T) \mid \sum_{T \in \mathcal{T}_h} \int_T F = 0 \right\}$.}\\
    $U$&Vector fields: $\{u \in H^1(\Omega) \otimes \mathbb{R}^2 \mid \left.u\right|_T \in  H^2(T) \otimes \mathbb{R}^2, \, \forall T \in \mathcal{T}_h \}$. \\
  \end{tabular}
  \caption{Summary of notation in Sections~\ref{sec:distcurv}--\ref{sec:poly}.}
  \label{tab:notation2}
\end{table}

\section{Distributional connection} \label{sec:distconn}

In this section, we discuss how to associate with a Regge metric $g$ a distributional connection one-form: a one-form that encodes the Levi-Civita connection in the sense of Definition~\ref{def:connection}.  The one-form we construct will have the property that its distributional exterior \emph{co}derivative is equal to $-\distcurv(g)$.  As such, it can be thought of as a distributional version of the Hodge star of $A$ from Definition~\ref{def:connection}.

Our construction will make use of a space of differential one-forms having single-valued (tangential) trace on element interfaces and vanishing (tangential) trace on $\partial \mathcal{S}$.  Let $H^1\Lambda^1(T)$ denote the space of one-forms on a triangle $T$ with coefficients in $H^1(T)$, and let
\[\begin{split}
W = \{ \alpha \in \prod_{T \in \mathcal{T}_h} H^1\Lambda^1(T) \mid i_{T_1,e}^* \alpha_{T_1} = i_{T_2,e}^* \alpha_{T_2} \, \forall e = T_1 \cap T_2 \in \mathcal{E}_h^0, \\ i_{T,e}^* \alpha_T = 0 \, \forall e \in \mathcal{E}_h \setminus \mathcal{E}_h^0, T \supset e  \}.
\end{split}\]
We also set
\[
{\color{blue}
X = \left\{ F \in \prod_{T \in \mathcal{T}_h} L^2\Lambda^2(T) \mid \sum_{T \in \mathcal{T}_h} \int_T F = 0 \right\},
}
\]
where $L^2\Lambda^2(T)$ denotes the space of square-integrable two-forms on $T$.  Note that $dV \subset W$ and $dW \subset X$.  In addition, on contractible domains, the sequence
\begin{center}
  \begin{tikzcd}
    0 \arrow{r}{} & V \arrow{r}{d} & W \arrow{r}{d} & X \arrow{r}{} & 0
  \end{tikzcd}
\end{center}
is exact, and hence so its dual; see Appendix~\ref{sec:appendix}.

{\color{blue}

Let $g$ be a Regge metric, and let $(e_1,e_2)$ be a $g$-orthonormal frame field that is smooth on each triangle $T \in \mathcal{T}_h$.  We do not assume that $(e_1,e_2)$ enjoys any interelement continuity.  On a triangle $T \in \mathcal{T}_h$, we let $A_T(g)$ denote the Levi-Civita connection one-form associated with $(e_1,e_2)$, so that (in the notation of Section~\ref{sec:conndot}) $\nabla e_1 = A_T(g)e_2$ and $\nabla e_2 = -A_T(g)e_1$ on $T$.

On an oriented edge $e \in \mathcal{E}_h^0$ shared by two triangles $T^+$ and $T^-$, the frame $(e_1,e_2)$ generally differs on either side of $e$.  We let $(e_1^+,e_2^+)$ and $(e_1^-,e_2^-)$ denote the values of $(e_1,e_2)$ on the two sides of $e$, with the convention that $e$ is oriented positively with respect to $T^+$ and negatively with respect to $T^-$; see Figure~\ref{fig:frames}.  We also let $n^+$ and $n^-$ denote the unit normal vectors on the two sides of $e$, with the convention that $n^+$ points outward from $T^+$ and has unit length with respect to $g_{T^+}$, and $n^-$ points outward from $T^-$ and has unit length with respect to $g_{T^-}$.  We define unit tangent vectors $\tau^+$ and $\tau^-$ by requiring that $(n^+,\tau^+)$ and $(n^-,\tau^-)$ form orthonormal frames with respect to $g_{T^+}$ and $g_{T^-}$, respectively.  By the tangential-tangential continuity of $g$, $\tau^+=-\tau^-$.  We will sometimes abbreviate $\tau^+$ as $\tau$, thereby recovering our notation from previous sections.

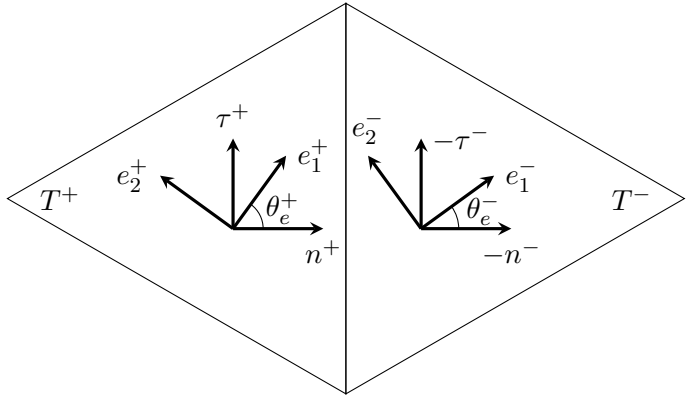
\begin{figure}
\begin{center}
\begin{tikzpicture}[
  triangle/.style={
   regular polygon,
   regular polygon sides=3,
   minimum size=6cm,
   draw}
]
\node [triangle,rotate=30] (a) {};
\node [triangle,rotate=210,left=3cm of a] (b) {};

\node at (-5.3,0) {$T^+$};
\node at (2.3,0) {$T^-$};

\coordinate (op) at (-3,-0.4) {};
\coordinate (np) at (-1.8,-0.4) {};
\coordinate (tp) at (-3,0.8) {};
\coordinate (e1p) at (-2.29466, 0.57082) {};
\coordinate (e2p) at (-3.97082, 0.305342) {};
\draw[very thick,-stealth](op)--(np) node[below]{$n^+$};
\draw[very thick,-stealth](op)--(tp) node[above]{$\tau^+$};
\draw[very thick,-stealth](op)--(e1p) node[right]{$e_1^+$};
\draw[very thick,-stealth](op)--(e2p) node[left]{$e_2^+$};
\path pic[draw, angle radius=4mm,"$\quad\theta_e^+$",angle eccentricity=1.3,pic text options={shift={(0pt,1pt)}}] {angle = np--op--e1p};

\coordinate (om) at (-0.5,-0.4) {};
\coordinate (nm) at (0.7,-0.4) {};
\coordinate (tm) at (-0.5,0.8) {};
\coordinate (e1m) at (0.47082, 0.305342) {};
\coordinate (e2m) at (-1.20534, 0.57082) {};
\draw[very thick,-stealth](om)--(nm) node[below]{$-n^-$};
\draw[very thick,-stealth](om)--(tm) node[right]{$-\tau^-$};
\draw[very thick,-stealth](om)--(e1m) node[right]{$e_1^-$};
\draw[very thick,-stealth](om)--(e2m) node[above]{$e_2^-$};
\path pic[draw, angle radius=5mm,"$\quad\theta_e^-$",angle eccentricity=1.4,pic text options={shift={(-1pt,1pt)}}] {angle = nm--om--e1m};
\end{tikzpicture}
\end{center}
\caption{\color{blue}Frames on either side of an edge shared by two triangles $T^+$ and $T^-$.  In this example, the metrics $g_{T^+}$ and $g_{T^-}$ are Euclidean, so $\tau^+=-\tau^-$ and $n^+ = -n^-$.  In general, $n^+$ need not equal $-n^-$.}
\label{fig:frames}
\end{figure}

We use $\theta_e^+$ to denote the counterclockwise angle by which $(e_1^+,e_2^+)$ is rotated relative to $(n^+,\tau^+)$.  Likewise, $\theta_e^-$ denotes that counterclockwise angle by which $(e_1^-,e_2^-)$ is rotated relative to $(-n^-,-\tau^-)$.  The difference
\[
\theta_e = \theta_e^+ - \theta_e^-
\]
will play an important role in what follows.  When we wish to emphasize its dependence on $g$ and/or $(e_1,e_2)$, we write $\theta_e(g)$ or $\theta_e(g,e_1,e_2)$.  This angle has the following interpretation.  Consider the linear transformation $\Pi_{T^-T^+} : \operatorname{span}\{e_1^+,e_2^+\} \to  \operatorname{span}\{e_1^-,e_2^-\}$ that sends $(n^+,\tau^+)$ to $(-n^-,-\tau^-)$.  Because $\Pi_{T^-T^+}$ is an isometry, the matrix of $\Pi_{T^-T^+}$ with respect to the bases $(e_1^+,e_2^+)$ and $(e_1^-,e_2^-)$ is an orthogonal matrix.  In fact, its value is readily seen to be
\[
\begin{pmatrix}
\cos\theta_e & -\sin\theta_e \\
\sin\theta_e & \cos\theta_e
\end{pmatrix}.
\]
Thus, thinking of $\Pi_{T^-T^+}$ as a parallel transport operator across the edge $e$, we can think of $\theta_e$ as the angle by which a vector rotates, relative to the (discontinuous) frame $(e_1,e_2)$, when it is parallel transported from $T^+$ to $T^-$.

{\color{dgreen}
  With this in mind, we are almost ready to define the distributional connection one-form associated with the frame. However, there is one complication that we must address first: the angle $\theta_e$ is a priori only defined up to a multiple of $2\pi$. This issue is intimately connected to the fact that, in the smooth setting, one can only define the connection one-form on a region that has a continuous frame. Such a frame always exists locally, but there can be topological obstructions to defining it globally.

  {\color{blue}In} our setting, we have a discontinuous frame. Nonetheless, across an edge, it is always possible to smooth the frame, and the angle $\theta_e$ tells us how. For instance, in Figure~\ref{fig:frames}, if we take $\theta_e$ to be $30^\circ$, then we quickly rotate the frame slightly clockwise as we cross the edge, but if we take $\theta_e$ to be the equivalent angle $\theta_e=-330^\circ$, then we quickly rotate the frame almost all the way around counterclockwise as we cross the edge. Thus, even though $30^\circ$ and $-330^\circ$ are equivalent angles, the choice impacts the topology of how we might smooth the frame. Moreover, depending on the choices we make, due to topological obstructions it may or may not be possible to smooth the frame near a vertex. To ensure that we have ``topologically consistent'' choices of $\theta_e$, we do the following.

  First, we assume that our {\color{blue}triangulation} $\mathcal{S}$ admits a globally constant (but not necessarily $g$-orthonormal) frame field $(\widehat{e}_1,\widehat{e}_2)$.  By this we mean that $(\widehat{e}_1,\widehat{e}_2)$ is constant on every triangle $T \in \mathcal{T}_h$, and $\widehat{g}(\widehat{e}_1,\tau)$ and $\widehat{g}(\widehat{e}_2,\tau)$ are single-valued on every edge $e \in \mathcal{E}_h^0$, where $\widehat{g}$ is the unique metric with respect to which $(\widehat{e}_1,\widehat{e}_2)$ is $\widehat{g}$-orthonormal. As a result, $\theta^+_e(\widehat{g}, \widehat{e}_1,\widehat{e_2})$ and $\theta^-_e(\widehat{g}, \widehat{e}_1,\widehat{e_2})$ represent the same angle, so, a priori, $\theta_e(\widehat{g}, \widehat{e}_1,\widehat{e}_2)$ is a multiple of $2\pi$. We now require that, for each edge, we make a consistent choice of angle representative, so that $\theta_e^+(\widehat{g}, \widehat{e}_1,\widehat{e_2})=\theta_e^-(\widehat{g}, \widehat{e}_1,\widehat{e_2})$, and hence $\theta_e(\widehat{g}, \widehat{e}_1,\widehat{e}_2)=0$.

  Next, on every triangle, we continuously vary the frame $(\widehat{e}_1,\widehat{e}_2)$ to the frame $(e_1,e_2)$. Consequently, the metric defined by this frame varies from $\widehat{g}$ to $g$. The angles $\theta_e^\pm$ thus also vary continuously, thereby giving us a specific value of $\theta_e^{\pm}(g, e_1, e_2)$ that is defined without the previous ambiguity up to multiples of $2\pi$. We remark that $\theta_e^{\pm}$ will depend on exactly how we vary the frame from $(\widehat{e}_1,\widehat{e}_2)$ to the frame $(e_1,e_2)$, but the important thing is that we will be ``topologically consistent'' around vertices in a way that will become evident in the proof of Proposition~\ref{prop:ddistconn}.

  We do remark that one systematic way of varying the frame from $(\widehat e_1,\widehat e_2)$ to $(e_1,e_2)$ is as follows. We can linearly interpolate between the metric $\widehat{g}$ and the metric $g$. Consequently, we can use equation~\eqref{udotuinv} with $\sigma=\dot g$ and $f=0$ to obtain $\xi=\dot uu^{-1}$. Solving this differential equation for the linear transformation $u$, we obtain a frame $(u\widehat e_1, u\widehat e_2)$ that is orthonormal at all times. In particular, at the final time, the frame is orthonormal with respect to $g$; call this $g$-orthonormal frame $(\bar e_1,\bar e_2)$. Our remaining task is to continuously rotate $(\bar e_1,\bar e_2)$ until it agrees with $(e_1,e_2)$. The angle $\psi$ between these frames is continuous, but the difficulty is, once again, that $\psi$ is a priori only defined up to $2\pi$; if at one point we continuously rotate the frame by $30^\circ$ and at a nearby point we continuously rotate the frame by $-330^\circ$, the result will be discontinuous. However, since on each triangle $T$, both frames are continuous, and $T$ is contractible, we can always choose a continuous real-valued (as opposed to angle-valued) $\psi$ and use this choice to rotate $(\bar e_1,\bar e_2)$ to $(e_1,e_2)$ in a continuous manner on $T$.

  Once we have chosen the particular representative $\theta_e$ for the angle between frames on adjacent triangles as discussed above, we can define the distributional connection one-form.
}

\begin{definition} \label{def:distconn}
Let $g$ be a Regge metric.  
Let $(e_1,e_2)$ be a $g$-orthonormal frame field that is smooth on each triangle $T \in \mathcal{T}_h$.
The \emph{distributional connection one-form} associated with $(e_1,e_2)$ is the linear functional $\Gamma_{\rm dist}(g) \in W'$ defined by
\[
\langle \Gamma_{\rm dist}(g), \alpha \rangle_{W',W} = \sum_{T \in \mathcal{T}_h} \langle \star A_T(g), \alpha \rangle_{g,T} - \sum_{e \in \mathcal{E}_h^0} \langle \theta_e(g), \alpha(\tau) \rangle_{g,e}, \quad \forall \alpha \in W.
\]
\end{definition}

{\color{dgreen}
\begin{remark}
  We remark that, formally, the distributional connection one-form is associated not just to the frame $(e_1,e_2)$ but to the specific path from $(\widehat{e}_1,\widehat{e}_2)$ to $(e_1,e_2)$; different choices of path will lead to $\theta_e$ that differ by a multiple of $2\pi$. However, we note that our results hold regardless of which path we choose.
\end{remark}
}

\begin{remark}
The parallel transport operator $\Pi_{T^-T^+}$ that we used to motivate Definition~\ref{def:distconn} is a widely used notion of parallel transport on piecewise flat triangulations~\cite{leok2005discrete,crane2010trivial,li2018regge,christiansen2011linearization}.    It is also used implicitly in~\cite[Chapter 3]{li2018regge} when studying geodesics on triangulations equipped with general (not necessarily piecewise flat) Regge metrics.
\end{remark}

Next we will show that the distributional exterior coderivative of $\Gamma_{\rm dist}(g)$ is ${\color{dgreen}-\distcurv(g)}$.

\begin{proposition} \label{prop:ddistconn}
The (Hodge star of the) distributional exterior coderivative of $\Gamma_{\rm dist}(g)$ is $-\distcurv(g)$.  That is,
\[
\langle \Gamma_{\rm dist}(g), dv \rangle_{W',W} = -\langle \distcurv(g), v \rangle_{V',V}, \quad \forall v \in V.
\]
\end{proposition}
\begin{proof}
For any $v \in V$, we have
\begin{equation} \label{Gammadv}
\langle \Gamma_{\rm dist}(g), dv \rangle_{W',W} = \sum_{T \in \mathcal{T}_h} \langle \star A_T(g), dv \rangle_{g,T} - \sum_{e \in \mathcal{E}_h^0} \langle \theta_e(g), dv(\tau) \rangle_{g,e}.
\end{equation}
On each triangle $T$, we can use Stokes' theorem to write
\begin{align}
\langle \star A_T(g), dv \rangle_{g,T} 
&= \int_T A_T(g) \wedge dv \nonumber \\
&= -\int_{\partial T} A_T(g) v + \int_T dA_T(g) v \nonumber \\
&= -\int_{\partial T} A_T(g) v  - \int_T \kappa_T(g) v \, \omega, \label{triint}
\end{align}
where $\kappa_T(g)$ is the Gaussian curvature of $g$ within $T$.  
On any edge $e$ shared by two triangles $T^+$ and $T^-$, we have
\begin{align*}
\langle \theta_e(g), dv(\tau) \rangle_{g,e}
&= \int_e \theta_e^+(g) dv(\tau) \, ds - \int_e \theta_e^-(g) dv(\tau) \, ds. 
\end{align*}
We can integrate each of these terms by parts and use Proposition~\ref{prop:dtheta} to compute
\begin{align}
\int_e \theta_e^{\pm}(g) dv(\tau) \, ds
&= \theta_e^{\pm}(g) v \big|_{z^{(1)}}^{z^{(2)}} - \int_e d\theta_e^{\pm}(g)(\tau) v \, ds \nonumber \\
&= \theta_e^{\pm}(g) v \big|_{z^{(1)}}^{z^{(2)}} \mp \int_e A_{T^{\pm}}(g)(\tau) v \, ds \pm \int_e k_e(g_{T^{\pm}}) v \, ds \nonumber \\
&= \theta_e^{\pm}(g) v \big|_{z^{(1)}}^{z^{(2)}} \mp \int_e A_{T^{\pm}}(g) v \pm \int_e k_e(g_{T^{\pm}}) v \, ds, \label{edgeint}
\end{align}
where $z^{(1)},z^{(2)} \in \mathcal{V}_h$ are the two endpoints of $e$, and we have taken care to note the different sign conventions for $\theta_e^+$ and $\theta_e^-$ when invoking Proposition~\ref{prop:dtheta}.  When we substitute these relations into~(\ref{Gammadv}), the integrals of $A_{T^{\pm}}(g)v$ over edges in~(\ref{edgeint}) cancel with the integrals of $A_{T}(g)v$ over triangle boundaries in~(\ref{triint}).  Noting that $v$ vanishes on $\partial \mathcal{S}$, we are left with
\begin{align*}
\langle \Gamma_{\rm dist}(g), dv \rangle_{W',W} 
&= -\sum_{T \in \mathcal{T}_h} \int_T \kappa_T(g) v \, \omega - \sum_{e \in \mathcal{E}_h^0} \int_e \llbracket k_e(g) \rrbracket v \, ds - \sum_{z \in \mathcal{V}_h^0} \widetilde{\Theta}_z(g) v(z)  \\
&= -\sum_{T \in \mathcal{T}_h} \langle \kappa_T(g), v \rangle_{g,T} - \sum_{e \in \mathcal{E}_h^0} \langle \llbracket k_e(g) \rrbracket, v \rangle_{g,e} - \sum_{z \in \mathcal{V}_h^0} \widetilde{\Theta}_z(g) v(z),
\end{align*}
where
\begin{equation} \label{angledefecttilde}
\widetilde{\Theta}_z(g) = \sum_{e \supset z} s_{ez} \theta_e(g)(z)
\end{equation}
and $s_{ez}=+1$ if $e$ points toward $z$ and $s_{ez}=-1$ if $e$ points away from $z$.
It remains to show that $\widetilde{\Theta}_z(g)$ equals the angle defect $\Theta_z(g)$, which we recall is given by
\[
\Theta_z(g) = 2\pi - \sum_{T \in \Omega_z} \theta_{zT}.
\]  
To show that $\widetilde{\Theta}_z(g)=\Theta_z(g)$, consider a vertex $z$ shared by $m$ triangles $T_0,T_1,T_2,\dots,T_m=T_0$.  Assume that these triangles are ordered so that for each $i=1,2,\dots,m$, $T_{i-1}$ and $T_i$ share an edge $e_{i-1/2}$ with one endpoint at $z$.  Assume that each such edge $e_{i-1/2}$ points toward $z$, so that the numbers $s_{ez}$ in~(\ref{angledefecttilde}) are all $+1$ for this vertex $z$.  Consider for each $i$ the linear transformation $p_{T_i}$ that rotates vectors in $T_i$ at $z$ counterclockwise by $\theta_{zT_i}$ radians with respect to the metric $g_{T_i}$ assigned to $T_i$ at $z$.  The matrix of $p_{T_i}$ with respect to the basis $\left(e_1\big|_{T_i}(z),e_2\big|_{T_i}(z)\right)$ is
\[
P_{T_i} := \begin{pmatrix} \cos \theta_{zT_i} & -\sin \theta_{zT_i} \\ \sin \theta_{zT_i} & \cos \theta_{zT_i} \end{pmatrix},
\]  
and the matrix of $\Pi_{T_i T_{i-1}}$ at $z$ with respect to the bases $\left(e_1\big|_{T_{i-1}}(z),e_2\big|_{T_{i-1}}(z)\right)$ and $\left(e_1\big|_{T_i}(z),e_2\big|_{T_i}(z)\right)$ is
\[
R_{T_i T_{i-1}} := \begin{pmatrix}
\cos\theta_{e_{i-1/2}} & -\sin\theta_{e_{i-1/2}}  \\
\sin\theta_{e_{i-1/2}} & \cos\theta_{e_{i-1/2}} 
\end{pmatrix}.
\]
The product
\[
q = p_{T_0} \Pi_{T_0 T_{m-1}} p_{T_{m-1}} \Pi_{T_{m-1} T_{m-2}} \cdots p_{T_2} \Pi_{T_2 T_1} p_{T_1} \Pi_{T_1 T_0}
\]
{\color{dgreen} is the identity operator: it is a rotation, and one can check that $p_{T_i}\Pi_{T_iT_{i-1}}$ sends the tangent vector of $e_{i-1/2}$ to the tangent vector of $e_{i+1/2}$.}
Thus,
\begin{align*}
\begin{pmatrix} 1 & 0 \\ 0 & 1 \end{pmatrix} 
&= P_{T_0} R_{T_0 T_{m-1}} P_{T_{m-1}} R_{T_{m-1} T_{m-2}} \cdots P_{T_2} R_{T_2 T_1} P_{T_1} R_{T_1 T_0} \\
&= \begin{pmatrix} \cos\left(\widetilde{\Theta}_z(g) + \sum_{i=1}^m \theta_{zT_i}\right) & -\sin\left(\widetilde{\Theta}_z(g) + \sum_{i=1}^m \theta_{zT_i}\right) \\ \sin\left(\widetilde{\Theta}_z(g) + \sum_{i=1}^m \theta_{zT_i}\right) & \cos\left(\widetilde{\Theta}_z(g) + \sum_{i=1}^m \theta_{zT_i}\right) \end{pmatrix},
\end{align*}
where $\widetilde{\Theta}_z(g)$ is given by~(\ref{angledefecttilde}).
It follows that $\widetilde{\Theta}_z(g) + \sum_{i=1}^m \theta_{zT_i} \in 2\pi\mathbb{Z}$.  One can see that in fact $\widetilde{\Theta}_z(g) + \sum_{i=1}^m \theta_{zT_i} = 2\pi$ by continuously deforming $g$ to a flat metric on $\cup_{i=1}^m T_i$ (as discussed above {\color{dgreen}Definition~\ref{def:distconn})}, in which case we clearly have $\sum_{i=1}^m \theta_{zT_i}=2\pi$ and $\widetilde{\Theta}_z=0$.  It follows that 
\[
\widetilde{\Theta}_z(g) =  2\pi - \sum_{i=1}^m \theta_{zT_i} = \Theta_z(g).
\]
\end{proof}

The following proposition explains how the distributional connection one-form behaves under a change in frame.

\begin{proposition} \label{prop:framechange}
{\color{dgreen} Let $g$ be a Regge metric, and let $(\bar{e}_1,\bar{e}_2)$ be a piecewise smooth $g$-orthonormal frame field with distributional connection one-form $\bar{\Gamma}_{\rm dist}(g)$. Let $\psi$ be a piecewise smooth scalar field (not necessarily continuous), let $(e_1,e_2)$ be the piecewise smooth $g$-orthonormal frame field obtained by rotating $(\bar{e}_1, \bar{e}_2)$ counterclockwise by $\psi$, and let $\Gamma_{\rm dist}(g)$ be the corresponding distributional connection one-form.} We have
\[
\langle \Gamma_{\rm dist}(g) - \bar{\Gamma}_{\rm dist}(g), \alpha \rangle_{W',W} = -\langle \psi, d\alpha \rangle_{X',X}, \quad \forall \alpha \in W,
\]
where $\langle \psi, F \rangle_{X',X} := \sum_{T \in \mathcal{T}_h} \int_T \psi F$ for all $F \in X$.
\end{proposition}
\begin{proof}
Using bars to denote quantities that are computed with respect to $(\bar{e}_1,\bar{e}_2)$, we have 
\[
\theta_e^{\pm}(g) = \bar{\theta}_e^{\pm}(g) + \psi^{\pm}
\]
on each edge $e \in \mathcal{E}_h^0$, where $\psi^+$ and $\psi^-$ denote the values of $\psi$ on opposite sides of $e$.  Meanwhile, the change-of-frame formula for smooth connection one-forms yields
\[
A_{T}(g) = \bar{A}_T(g) + d\psi.
\]
It follows that 
\begin{align*}
\langle \Gamma_{\rm dist}(g) - \bar{\Gamma}_{\rm dist}(g), \alpha \rangle_{W',W} 
&= \sum_{T \in \mathcal{T}_h} \langle \star d\psi, \alpha \rangle_{g,T} - \sum_{e \in \mathcal{E}_h^0} \langle \llbracket \psi \rrbracket, \alpha(\tau) \rangle \\
&= \sum_{T \in \mathcal{T}_h} \left( \int_T d\psi \wedge \alpha - \int_{\partial T} \psi \alpha \right) \\
&= -\sum_{T \in \mathcal{T}_h} \int_T \psi d\alpha.
\end{align*}
\end{proof}

Next we study how the distributional connection one-form evolves under deformations of the metric.

\begin{proposition} \label{prop:distconndot}
Let $g(t)$ be a Regge metric depending smoothly on $t$.   Let $(e_1(t),e_2(t))$ be a $g(t)$-orthonormal frame field that is smooth on each triangle $T \in \mathcal{T}_h$ and depends smoothly on $t$.  Then
\[
\frac{d}{dt} \langle \Gamma_{\rm dist}(g(t)), \alpha \rangle_{W',W} = -\frac{1}{2} c_h(g(t); \sigma(t), \alpha) - \frac{d}{dt} \langle F(t), d\alpha \rangle_{X',X}, \quad \forall \alpha \in W,
\]
where $\sigma = \frac{\partial}{\partial t}g$.  In this formula, $c_h : \mathcal{M} \times \Sigma \times W \rightarrow \mathbb{R}$ is given by
\begin{align*}
c_h(g; \sigma, \alpha) 
&= \sum_{T \in \mathcal{T}_h} \langle S \sigma, \nabla \alpha \rangle_{g,T} + \sum_{e \in \mathcal{E}_h} \langle \sigma(\tau,\tau), \llbracket \alpha(n) \rrbracket \rangle_{g,e},
\end{align*}
and $F(t) \in X'$ is given by
\[
\langle F(t), G \rangle_{X',X} = \sum_{T \in \mathcal{T}_h} \int_T \widetilde{f}(t) G, \quad \forall G \in X,
\]
where $\widetilde{f}(t) = \int_0^t f(t')\,dt'$, and $f = \frac{1}{2}\sigma(e_1,e_2) - \dot{e}^2(e_1) = -\frac{1}{2}\sigma(e_1,e_2) + \dot{e}^1(e_2)$.
\end{proposition}
}
\begin{remark}
We have
\begin{equation} \label{chdv}
c_h(g;\sigma,dv) = b_h(g; \sigma, v), \quad \forall (g,\sigma,v) \in \mathcal{M} \times \Sigma \times V,
\end{equation}
since $\nabla dv = \Hess v$ and $dv(n) = \nabla_n v$.  {\color{blue}Thus, taking $\alpha=dv$ in Proposition~\ref{prop:distconndot} provides an alternative proof of Theorem~\ref{thm:main}.}
\end{remark}
\begin{remark} \label{remark:ch2}
An equivalent definition of $c_h$, which can be derived using integration by parts, is
\begin{equation} \label{ch2}
c_h(g; \sigma, \alpha) = -\sum_{T \in \mathcal{T}_h} \langle \dv S \sigma, \alpha \rangle_{g,T} + \sum_{e \in \mathcal{E}_h^0} \langle \llbracket \sigma(n,\tau) \rrbracket, \alpha(\tau) \rangle_{g,e}.
\end{equation}
Here, $\llbracket \sigma(n,\tau) \rrbracket$ denotes the jump in $\sigma(n,\tau)$ across the edge $e$, which is not to be confused with the quantity $\llbracket \sigma(n,\tau) \rrbracket_{zT}$ defined in~(\ref{jumpvertex}).
\end{remark}
{\color{blue}
\begin{proof}[Proof of Proposition~\ref{prop:distconndot}]
For any $\alpha \in W$, we have
\[
\frac{d}{dt}\langle \Gamma_{\rm dist}(g(t)), \alpha \rangle_{W',W} = \sum_{T \in \mathcal{T}_h} \frac{d}{dt} \langle \star A_T(g(t)), \alpha \rangle_{g,T} - \sum_{e \in \mathcal{E}_h^0} \frac{d}{dt} \langle \theta_e(g(t)), \alpha(\tau) \rangle_{g,e}.
\]
On each triangle $T$, Proposition~\ref{prop:Adotdiv} implies
\begin{align*}
\frac{d}{dt} \langle \star A_T(g(t)), \alpha \rangle_{g,T} 
&= \frac{d}{dt} \int_T A_T(g(t)) \wedge \alpha \\
&= -\frac{1}{2} \int_T \star \dv S \sigma \wedge \alpha + \int_T df \wedge \alpha  \\
&= -\frac{1}{2} \int_T  \star \dv S \sigma \wedge \alpha + \int_{\partial T} f \alpha - \int_T f d\alpha  \\
&= \frac{1}{2} \langle \dv S \sigma, \alpha \rangle_{g,T} + \int_{\partial T} f \alpha - \langle \star f, d\alpha \rangle_{g,T},
\end{align*}
where $f = \frac{1}{2}\sigma(e_1,e_2) - \dot{e}^2(e_1) = -\frac{1}{2}\sigma(e_1,e_2) + \dot{e}^1(e_2)$.  On each interior edge $e$, Proposition~\ref{prop:dtheta} implies
\begin{align*}
\frac{d}{dt} \langle \theta_e(g(t)), \alpha(\tau) \rangle_{g,e}
&= \frac{d}{dt} \int_e \theta_e(g(t)) \alpha(\tau) \, ds \\
&= \frac{d}{dt} \int_e \left( \theta_e^+(g(t)) - \theta_e^-(g(t)) \right) \alpha \\
&= \int_e \left\llbracket \frac{1}{2}\sigma(n,\tau) + f \right\rrbracket \alpha \\
&= \frac{1}{2} \langle \llbracket \sigma(n,\tau) \rrbracket, \alpha(\tau) \rangle_{g,e} + \int_e \llbracket f \rrbracket \alpha.
\end{align*}
When we sum over all triangles $T$ and all interior edges $e$, the integrals of $f\alpha$ over triangle boundaries cancel with the integrals of $\llbracket f \rrbracket \alpha$ over edges.  Noting that $\alpha(\tau)$ vanishes on $\partial \mathcal{S}$, we get 
\[
\frac{d}{dt}\langle \Gamma_{\rm dist}(g(t)), \alpha \rangle_{W',W} = \frac{1}{2} \sum_{T \in \mathcal{T}_h} \langle \dv S \sigma, \alpha \rangle_{g,T} - \frac{1}{2} \sum_{e \in \mathcal{E}_h^0} \langle \llbracket \sigma(n,\tau) \rrbracket, \alpha(\tau) \rangle_{g,e} - \sum_{T \in \mathcal{T}_h} \langle \star f, d\alpha \rangle_{g,T}.
\]
In view of Remark~\ref{remark:ch2} and the fact that
\[
\frac{d}{dt} \langle F(t), d\alpha \rangle_{X',X} = \frac{d}{dt} \sum_{T \in \mathcal{T}_h} \int_T \widetilde{f}(t) d\alpha =   \sum_{T \in \mathcal{T}_h} \int_T f(t) d\alpha =  \sum_{T \in \mathcal{T}_h} \langle \star f(t), d\alpha \rangle_{g,T},
\]
we see that
\[
\frac{d}{dt}\langle \Gamma_{\rm dist}(g(t)), \alpha \rangle_{W',W} = -\frac{1}{2} c_h(g(t); \sigma(t), \alpha) - \frac{d}{dt} \langle F(t), d\alpha \rangle_{X',X}.
\]
\end{proof}

\paragraph{Choosing a frame field.}  Definition~\ref{def:distconn} assumes that one has selected a piecewise smooth $g$-orthonormal frame field $(e_1,e_2)$ in advance.  On a triangulated planar domain, there is a natural way to construct such a frame field.  Starting from a frame field $E_1,E_2$ that is orthonormal and globally parallel with respect to the Euclidean metric $\delta$, we deform both the metric and the frame field until the frame field is $g$-orthonormal.  The deforming metric can be taken to be $(1-t)\delta + tg$ with $t \in [0,1]$.  According to~(\ref{udotuinv}), the frame field must then satisfy $e_1(t)=u(t)E_1$ and $e_2(t)=u(t)E_2$, where $u$ is the linear transformation (dependent on both space and time) that satisfies the differential equation $\dot{u}(t)u(t)^{-1}=-\frac{1}{2}(g-\delta)^{\sharp} + fJ$ with initial condition $u(0)=\operatorname{id}$ and $f$ arbitrary.  Here, the sharp is taken with respect to the metric $(1-t)\delta+tg$.  Choosing $f=0$ for simplicity, we obtain (up to an overall rotation) a canonical $g$-orthonormal frame field $(e_1,e_2)$ at time $t=1$.  We refer to the distributional connection one-form associated with this frame field as the canonical distributional connection one-form.  With the help of Proposition~\ref{prop:distconndot}, one can compute it directly without ever constructing $(e_1,e_2)$.

\begin{definition} \label{def:distconn2}
Let $g$ be a Regge metric on a triangulated planar domain.  The \emph{canonical distributional connection one-form} is the distributional connection one-form $\Gamma_{\rm dist}(g)$ associated with the canonical frame field $(e_1,e_2)$ constructed above.  Equivalently,
\begin{equation} \label{distconn2}
\langle \Gamma_{\rm dist}(g), \alpha \rangle_{W',W} = -\frac{1}{2} \int_0^1 c_h((1-t)\delta + tg; g-\delta, \alpha) \, dt, \quad \forall \alpha \in W.
\end{equation}
\end{definition}

}

\section{Convergence} \label{sec:conv}

In this section, we study the convergence of the distributional curvature and distributional connection under refinement.  We restrict our attention to the setting in which $\mathcal{S}$ triangulates a polygonal domain $\Omega \subset \mathbb{R}^2$.  We suppose that $\mathcal{S}$ belongs to a family of such triangulations parametrized by
\[
h = \max_{T \in \mathcal{T}_h} h_T,
\]
where $h_T = \diam T$ denotes the diameter of a triangle $T$.  We assume this family is shape-regular.  That is, there exists a constant $C$ independent of $h$ such that
\[
\max_{T \in \mathcal{T}_h} \frac{h_T}{\rho_T} \le C
\]
for every $h>0$, where $\rho_T$ denotes the inradius of $T$.  We reuse the letter $C$ below to denote a constant independent of $h$ which is not necessarily the same at each occurrence.

For $\alpha \in W$ and $v \in V$, we denote
\[
\|\alpha\|_{\color{blue}W,h} = \|\alpha\|_{L^2(\Omega)} + \left( \sum_{T \in \mathcal{T}_h} h_T^2 |\alpha|_{H^1(T)}^2 \right)^{1/2}
\]
and
\[
\|v\|_{\color{blue}V,h} = \|dv\|_{\color{blue}W,h} =  |v|_{H^1(\Omega)} + \left( \sum_{T \in \mathcal{T}_h} h_T^2 |v|_{H^2(T)}^2 \right)^{1/2},
\]
where $|\cdot|_{H^k(T)}$ denotes the $H^k(T)$-seminorm. 
The dual norms are denoted
\begin{align*}
\|\beta\|_{\color{blue}W',h} &= 
\sup_{\alpha \in W} \frac{\langle \beta, \alpha \rangle_{W',W}}{\|\alpha\|_{\color{blue}W,h}}, \\
\|u\|_{\color{blue}V',h} &= 
\sup_{v \in V} \frac{\langle u, v \rangle_{V',V}}{\|v\|_{\color{blue}V,h}}.
\end{align*}
Note that $\|\cdot\|_{\color{blue}V,h}$ is a valid norm on $V$ by the Poincar\'e inequality and the containment $V \subset H^1_0(\Omega)$.

Given a smooth Riemannian metric $g$ on $\Omega$, we identify its curvature two-form $\kappa(g) \omega(g)$ with an element of $V'$ via
\[
\langle \kappa(g) \omega(g), v \rangle_{V',V} := \int_\Omega v \kappa(g) \omega(g).
\]
In order to discuss convergence of the canonical distributional connection~(\ref{distconn2}), we also need to single out a canonical smooth connection one-form $A(g)$.  In view of~(\ref{Adotdiv}) and~(\ref{distconn2}), we select
\[
A(g) = -\frac{1}{2} \int_0^1 \star \dv S (g-\delta) \, dt,
\]
where the operators $\star$, $\dv$, and $S$ in the integrand are taken with respect to the metric $G(t) = (1-t)\delta + tg$.  The Hodge star of $A(g)$ with respect to $g$ is the smooth counterpart of what we defined in Definition~\ref{def:distconn2}.  We identity $\star A(g)$ with an element of $W'$ via
\[
\langle \star A(g), \alpha \rangle_{W',W} = \int_\Omega \langle \star A(g), \alpha \rangle_g \, \omega(g).
\]

\begin{theorem} \label{thm:conv}
Let $g$ be a smooth Riemannian metric on $\overline{\Omega}$.  
Let $\{g_h\}_{h>0}$ be a sequence of Regge metrics satisfying $\lim_{h \rightarrow 0} \|g_h-g\|_{L^\infty(\Omega)} = 0$ and $\sup_{h>0} \max_{T \in \mathcal{T}_h} \|g_h\|_{W^{1,\infty}(T)} < \infty$.  Then there exists a constant $C$ independent of $h$ such that 
\begin{equation} \label{conv}
\begin{split}
\| \distcurv(g_h) - \kappa(g)\omega(g) \|_{\color{blue}V',h} &+ \| \Gamma_{\rm dist}(g_h) - \star A(g) \|_{\color{blue}W',h} \\ &\le C \left[ \left( \sum_{T \in \mathcal{T}_h} h_T^{-2} \|g_h-g\|_{L^2(T)}^2 \right)^{1/2} + \left( \sum_{T \in \mathcal{T}_h} |g_h-g|_{H^1(T)}^2 \right)^{1/2} \right]
\end{split}
\end{equation}
for every $h$ sufficiently small.
\end{theorem}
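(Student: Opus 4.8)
The plan is to reduce the convergence statement to the analysis of the bilinear forms $b_h$ and $c_h$, exploiting the differential-equation characterizations from Theorem~\ref{thm:main} and Proposition~\ref{prop:distconnODE}. First I would set up the one-parameter family of Regge metrics $g_h(t) = (1-t)\delta + t g_h$ and, in parallel, the smooth family $G(t) = (1-t)\delta + t g$, for $t \in [0,1]$. Both interpolations start at the Euclidean metric $\delta$, where the distributional curvature and connection vanish identically and agree with their smooth counterparts. By Definition~\ref{def:distconn2} and the analogous smooth formula for $A(g)$, together with the ``fundamental theorem of calculus'' applied to $t \mapsto \langle \distcurv(g_h(t)), v\rangle$ (whose derivative is $\tfrac12 b_h(g_h(t); g_h - \delta, v)$ by Theorem~\ref{thm:main}) and to $t \mapsto \langle \kappa(G(t))\omega(G(t)), v\rangle$ (whose derivative is $\tfrac12 \int_\Omega v\, \dv\dv S(g-\delta)\,\omega(G(t))$ by Proposition~\ref{prop:kappavoldot}), I would write both errors as integrals over $t \in [0,1]$ of a difference of bilinear forms evaluated on the \emph{same} tensor increments $\sigma = g_h - \delta$ and $\widehat\sigma = g - \delta$. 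The error thus splits into (i) a ``consistency'' piece comparing $b_h(g_h(t); g_h - \delta, v)$ to the smooth $\langle \kappa(G(t))\omega(G(t)), \cdot\rangle$-derivative evaluated on $g - \delta$, and (ii) an ``interpolation'' piece accounting for $g_h \neq g$.

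The key observation making this tractable is that, as the paragraph after Theorem~\ref{thm:main} emphasizes, $b_h(g; \cdot, \cdot)$ is exactly the non-Euclidean Hellan--Herrmann--Johnson bilinear form, and the curvature approximation it induces is the one analyzed in~\cite{gawlik2019high}. So the heart of the argument is to invoke the estimates from that prior work, transported to the present dual-norm setting. Concretely, the steps I would carry out are: (1) establish continuity bounds $|b_h(g; \sigma, v)| \le C(g)\,\|\sigma\|_{\Sigma}\,\|v\|_V$ and $|c_h(g;\sigma,\alpha)| \le C(g)\,\|\sigma\|_\Sigma\,\|\alpha\|_W$ uniformly in $h$ and in $g$ ranging over the (uniformly $W^{1,\infty}$-bounded, uniformly positive-definite for small $h$) family $g_h(t)$ — this uses shape-regularity, scaled trace inequalities on each triangle, and the explicit form~\eqref{bh},~\eqref{ch2} of the bilinear forms; (2) for the consistency piece, when $\sigma = \widehat\sigma$ is the \emph{smooth} increment $g-\delta$, use integration by parts (Proposition~\ref{prop:bhrewritten}) to see that $\tfrac12 b_h(G(t); g-\delta, v) = \langle \tfrac{d}{dt}(\kappa(G(t))\omega(G(t))), v\rangle_{V',V}$ exactly, with no error, since the smooth curvature two-form is genuinely a function and all the edge/vertex jump terms in~\eqref{bhrewritten} cancel across interfaces — this is the place where the distributional definition is ``consistent''; (3) for the interpolation piece, estimate $\tfrac12|b_h(g_h(t); g_h-\delta, v) - b_h(G(t); g-\delta, v)|$ by splitting off the dependence on the ``background'' metric and on the increment, bounding each by $\|g_h - g\|_{L^\infty}$ times a continuity constant (for the background-metric variation, using local Lipschitz dependence of $\langle\cdot,\cdot\rangle_g$, $\Tr$, $S$, $\omega$ on $g$) plus the genuine approximation terms $\big(\sum_T h_T^{-2}\|g_h-g\|_{L^2(T)}^2\big)^{1/2}$ and $\big(\sum_T |g_h-g|_{H^1(T)}^2\big)^{1/2}$ coming from the $H^1$-type terms in~\eqref{bh}; then (4) take the supremum over $\|v\|_V \le 1$ and integrate over $t\in[0,1]$ to get the $\|\cdot\|_{V'}$ estimate. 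The $\|\cdot\|_{W'}$ estimate for the connection follows by the identical argument with $b_h$ replaced by $c_h$ and $v \in V$ replaced by $\alpha \in W$, using that $\Gamma_{\rm dist}(g_h)$ and $\star A(g)$ are both defined by integrating $-\tfrac12 c_h$ over $t\in[0,1]$ with $F = 0$, so no closed-one-form/cohomology correction intervenes.

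The main obstacle I anticipate is step (2)–(3): making the consistency cancellation precise on a general (non-flat, piecewise-smooth) background while simultaneously controlling the \emph{variation of the bilinear form with respect to its first slot} $g$ along the path $g_h(t)$. One must verify that the constants $C(g_h(t))$ stay bounded uniformly — this requires a uniform lower bound on the smallest eigenvalue of $g_h(t)$ over $t\in[0,1]$, which holds once $h$ is small enough that $\|g_h - \delta\|_{L^\infty}$ is controlled, but it is genuinely where the hypothesis $\|g_h - g\|_{L^\infty} \to 0$ and the uniform $W^{1,\infty}$ bound get used in tandem. A secondary technical point is handling the edge terms $\langle \sigma(\tau,\tau), \llbracket \nabla_n v\rrbracket\rangle_{g,e}$ and the vertex angle-defect terms: one must absorb the jump $\llbracket \nabla_n v \rrbracket$ using a scaled trace inequality $h_T^{1/2}\|\nabla_n v\|_{L^2(e)} \le C(\|v\|_{H^1(T)} + h_T|v|_{H^2(T)})$ so that the edge contributions are bounded by $h_T^{-1/2}\|\sigma(\tau,\tau)\|_{L^2(e)}$ times $\|v\|_V$, and then a further trace estimate converts $h_T^{-1/2}\|(g_h-g)|_e\|_{L^2(e)}$ into the bulk quantities $h_T^{-1}\|g_h-g\|_{L^2(T)} + |g_h - g|_{H^1(T)}$ appearing on the right-hand side of~\eqref{conv}. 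Once these scaled trace inequalities and the uniform coercivity-of-background are in hand, assembling the bound is routine; I would lean on~\cite{gawlik2019high} for the parts of this that were carried out there verbatim.
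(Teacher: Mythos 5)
Your proposal is correct and follows essentially the same route as the paper: represent both $\distcurv(g_h)$ and $\kappa(g)\omega(g)$ (and likewise $\Gamma_{\rm dist}(g_h)$ and $\star A(g)$) as integrals over $t\in[0,1]$ of $\tfrac12 b_h$ (resp. $-\tfrac12 c_h$) along the linear paths from $\delta$, use the fact that $b_h$ coincides with the smooth bilinear form $\langle \dv\dv S\sigma,v\rangle_g$ for smooth data, split the error into a background-metric perturbation term and an increment term $g_h-g$, and bound both via the HHJ-type continuity estimates of Lemmas 4.11--4.12 of~\cite{gawlik2019high}. The technical points you flag (uniform positive-definiteness of $(1-t)\delta+tg_h$ for small $h$, scaled trace inequalities converting edge terms into the bulk quantities on the right of~\eqref{conv}) are exactly the content of the cited lemmas, so no gap remains.
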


This theorem, {\color{blue}proved below}, shows that if $g_h$ converges to $g$ rapidly enough, then the distributional curvature two-form associated with $g_h$ converges to $\kappa(g)\,\omega(g)$ in $V'$, and the canonical distributional connection one-form associated with $g_h$ converges to $\star A(g)$ in $W'$.  One way to accomplish this is to take $g_h$ equal to a suitable interpolant of $g$ onto $\mathcal{M}^r_h$, the space of Regge metrics on $\Omega$ that are piecewise polynomial with respect to $\mathcal{T}_h$ of degree at most $r$.   
{\color{blue}Below we state a corollary of Theorem~\ref{thm:conv} that applies when the chosen interpolant satisfies an elementwise error estimate of the form
\begin{equation} \label{interperror}
\|g_h-g\|_{L^2(T)} + h_T |g_h-g|_{H^1(T)} \le c h_T^{r+1} |g|_{H^{r+1}(T)}
\end{equation}
for some constant $c$ depending only on $r$ and the ratio $h_T/\rho_T$.
}

\begin{corollary} \label{cor:conv}
If $r \ge 1$, $h$ is sufficiently small, and $g_h \in \mathcal{M}_h^r$ {\color{blue}satisfies~(\ref{interperror})}, then
\[
\| \distcurv(g_h) - \kappa(g)\omega(g) \|_{\color{blue}V',h} + \| \Gamma_{\rm dist}(g_h) - \star A(g) \|_{\color{blue}W',h} \le C \left( \sum_{T \in \mathcal{T}_h} h_T^{2r} |g|_{H^{r+1}(T)}^2 \right)^{1/2}.
\]
\end{corollary}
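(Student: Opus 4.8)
The plan is to deduce Corollary~\ref{cor:conv} directly from Theorem~\ref{thm:conv}; the only new ingredient needed is the standard set of interpolation error bounds for the canonical interpolant $g_h = I_h g \in \mathcal{M}_h^r$. In other words, I would not re-enter the analysis of the distributional curvature or connection at all, but simply feed approximation estimates into the right-hand side of~(\ref{conv}).

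First I would record the approximation properties of the canonical interpolation operator $I_h$ onto $\mathcal{M}_h^r$. Its degrees of freedom (see~\cite{li2018regge}) are unisolvent and reproduce symmetric $(0,2)$-tensor fields that are piecewise polynomial of degree at most $r$; since $g$ is smooth on $\overline{\Omega}$, $I_h g$ is well defined. A Bramble--Hilbert argument on a reference triangle, combined with an affine scaling and shape-regularity (to keep all constants uniform in $h$), then yields
\[
| g - I_h g |_{H^m(T)} \le C\, h_T^{\,r+1-m}\, | g |_{H^{r+1}(T)}, \qquad m \in \{0,1\},\ T \in \mathcal{T}_h,
\]
with $C$ depending only on $r$ and the shape-regularity constant. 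Because $g$ is smooth, all the seminorms on the right are finite.

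Next I would check that $g_h = I_h g$ satisfies the hypotheses of Theorem~\ref{thm:conv}. The case $m = 0$ above (or its $L^\infty$ analogue) gives $\|g_h - g\|_{L^\infty(\Omega)} \le C h \to 0$; since $g$ is uniformly positive definite on the compact set $\overline{\Omega}$, this forces $g_h$ to be positive definite, hence a genuine Regge metric, once $h$ is small enough---this is exactly where the ``$h$ sufficiently small'' qualifier enters. The uniform bound $\sup_{h>0} \max_{T \in \mathcal{T}_h} \|g_h\|_{W^{1,\infty}(T)} < \infty$ follows from the same estimates together with the smoothness of $g$. With these hypotheses verified, Theorem~\ref{thm:conv} applies, and it remains only to substitute: squaring the $m=0$ estimate and multiplying by $h_T^{-2}$ gives $h_T^{-2}\|g_h - g\|_{L^2(T)}^2 \le C\, h_T^{2r} |g|_{H^{r+1}(T)}^2$, while squaring the $m=1$ estimate gives $|g_h - g|_{H^1(T)}^2 \le C\, h_T^{2r} |g|_{H^{r+1}(T)}^2$. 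Summing over $T \in \mathcal{T}_h$ and inserting both bounds into the right-hand side of~(\ref{conv}) produces exactly $C\big(\sum_{T \in \mathcal{T}_h} h_T^{2r} |g|_{H^{r+1}(T)}^2\big)^{1/2}$.

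I do not anticipate any genuine obstacle: all of the analytic substance lives in Theorem~\ref{thm:conv}, and this corollary is the routine ``plug in the interpolation estimates'' step. The two points that merit a little care are (i) the uniformity in $h$ of the interpolation constants, which is precisely what shape-regularity supplies, and (ii) the role of the hypothesis $r \ge 1$: for $r = 0$ the consistency term $|g_h - g|_{H^1(T)}$ does not tend to zero, so this particular route to convergence degenerates, even though the distributional curvature of piecewise constant Regge metrics is known to converge by other means.
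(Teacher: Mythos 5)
Your proposal is correct and follows essentially the same route as the paper, which deduces the corollary from Theorem~\ref{thm:conv} together with the interpolation error estimates for the canonical Regge interpolant from~\cite[Theorem 2.5]{li2018regge}. The details you supply---verifying the $L^\infty$ convergence and uniform $W^{1,\infty}$ bounds needed as hypotheses of Theorem~\ref{thm:conv}, and substituting the $m=0,1$ estimates into the right-hand side of~(\ref{conv})---are exactly the ones the paper leaves implicit.
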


{\color{blue}
\begin{remark}
Under a stronger assumption on $g_h$, Gopalakrishnan, Neunteufel, Sch\"{o}berl, and Wardetzky~\cite{gopalakrishnan2022analysis} have recently proved improved error estimates for (piecewise polynomial projections of) the distributional curvature and distributional connection.  Their estimates require that $g_h$ be the canonical interpolant of $g$ onto $\mathcal{M}_h^r$~\cite[p. 29]{li2018regge}, which is an interpolant that we will discuss in more detail in Section~\ref{sec:poly}.  By exploiting subtle properties of this interpolant, they derive error estimates that are one order higher in $h$ and include the case $r=0$.

To be clear, we presented error estimates for the distributional curvature and distributional connection in the $V'$-norm and $W'$-norm, whereas~\cite{gopalakrishnan2022analysis} presents error estimates in the $H^{-1}(\Omega)$-norm and $L^2(\Omega)$-norm for \emph{projections} of the aforementioned quantities onto piecewise polynomial finite element spaces (just like~\cite{gawlik2019high} did for the curvature).  One can relate the two versions of the estimates without much difficulty by estimating the projection error; see~\cite{gawlik2019high} and~\cite{gopalakrishnan2022analysis}.
\end{remark}  
\begin{remark}
Under a different assumption on $g_h$, Cheeger, M\"{u}ller, and Schrader~\cite{cheeger1984curvature} proved that when $r=0$, the distributional curvature two-form of $g_h$ converges in the (setwise) sense of measures to the curvature two-form of $g$ at a rate of $O(h)$ in two dimensions~\cite[Equation (5.7)]{cheeger1984curvature} and at a rate of $O(h^{1/2})$ in three and higher dimensions~\cite[Theorem 5.1]{cheeger1984curvature}.  Their estimate requires that $g_h$ be the unique piecewise flat Regge metric with the property that on each $T \in \mathcal{T}_h$, the lengths of the edges of $T$, as measured by $g_h$, agree with the geodesic distances between the vertices of $T$, as measured by $g$.
\end{remark}
}

Let us prove Theorem~\ref{thm:conv}.  
Consider the one-parameter family of Regge metrics $G_h(t) = (1-t)\delta + tg_h$ emanating from the Euclidean metric $\delta$ at $t=0$.  Since $\distcurv(\delta)=0$, $\frac{\partial}{\partial t} G_h = g_h-\delta$, and $G_h(1)=g_h$, Theorem~\ref{thm:main} implies that
\begin{equation} \label{kdistint}
\langle \distcurv(g_h), v \rangle_{V',V} = \frac{1}{2} \int_0^1 b_h((1-t)\delta + tg_h; g_h-\delta, v) \, dt, \quad \forall v \in V.
\end{equation}
On the other hand, Proposition~\ref{prop:kappavoldot} implies that the curvature two-form $\kappa(g)\omega(g)$ satisfies
\[
\langle \kappa(g)\omega(g), v \rangle_{V',V} = \frac{1}{2} \int_0^1 b((1-t)\delta + tg; g-\delta, v) \, dt, \quad \forall v \in V,
\]
where 
\[
b(g;\sigma,v) = \langle \dv \dv S \sigma, v \rangle_{g,\mathcal{S}}.
\]
In view of Proposition~\ref{prop:bhrewritten}, $b(g;\sigma,v)$ coincides with $b_h(g;\sigma,v)$ for smooth $g$ and $\sigma$, so we may replace $b$ by $b_h$:
\[
\langle \kappa(g)\omega(g), v \rangle_{V',V} = \frac{1}{2} \int_0^1 b_h((1-t)\delta + tg; g-\delta, v) \, dt, \quad \forall v \in V.
\]
It follows that for any $v \in V$,
\begin{align*}
\langle \distcurv(g_h) - \kappa(g)\omega(g), v \rangle_{V',V} 
&= \frac{1}{2} \int_0^1 \left[ b_h((1-t)\delta + tg_h; g_h-\delta, v) - b_h((1-t)\delta + tg; g-\delta, v) \right] \, dt \\
&= \frac{1}{2} \int_0^1 \left[ b_h((1-t)\delta + tg_h; g_h-\delta, v) - b_h((1-t)\delta + tg; g_h-\delta, v) \right] \, dt \\
&\;\;\; + \frac{1}{2} \int_0^1 b_h((1-t)\delta + tg; g_h-g, v) \, dt.
\end{align*}
Thus,
\begin{align}
\left| \langle \distcurv(g_h) - \kappa(g)\omega(g), v \rangle_{V',V} \right|
&\le \frac{1}{2} \int_0^1 \left| b_h((1-t)\delta + tg_h; g_h-\delta, v) - b_h((1-t)\delta + tg; g_h-\delta, v) \right| \, dt \nonumber \\
&\;\;\; + \frac{1}{2} \int_0^1 \left| b_h((1-t)\delta + tg; g_h-g, v) \right| \, dt. \label{boundedbybh}
\end{align}

Arguing similarly for $\Gamma_{\rm dist}(g_h)$, we find that for any $\alpha \in W$,
\begin{align}
\left| \langle \Gamma_{\rm dist}(g_h) - \star A(g), \alpha \rangle_{W',W} \right|
&\le \frac{1}{2} \int_0^1 \left| c_h((1-t)\delta + tg_h; g_h-\delta, \alpha) - c_h((1-t)\delta + tg; g_h-\delta, \alpha) \right| \, dt \nonumber \\
&\;\;\; + \frac{1}{2} \int_0^1 \left| c_h((1-t)\delta + tg; g_h-g, \alpha) \right| \, dt. \label{boundedbych}
\end{align}
\begin{lemma} \label{lemma:bhbound}
We have
\begin{equation} \label{bhbound}
\begin{split}
&\left| b_h((1-t)\delta + tg_h; g_h-\delta, v) - b_h((1-t)\delta + tg; g_h-\delta, v) \right| + \left| b_h((1-t)\delta + tg; g_h-g, v) \right| \\
&\le C\left[ \left( \sum_{T \in \mathcal{T}_h} h_T^{-2} \|g_h-g\|_{L^2(T)}^2 \right)^{1/2} + \left( \sum_{T \in \mathcal{T}_h} |g_h-g|_{H^1(T)}^2 \right)^{1/2} \right] \left[ |v|_{H^1(\Omega)} + \left( \sum_{T \in \mathcal{T}_h} h_T^2 |v|_{H^2(T)}^2 \right)^{1/2} \right]
\end{split}
\end{equation}
and
\begin{equation} \label{chbound}
\begin{split}
&\left| c_h((1-t)\delta + tg_h; g_h-\delta, \alpha) - c_h((1-t)\delta + tg; g_h-\delta, \alpha) \right| + \left| c_h((1-t)\delta + tg; g_h-g, \alpha) \right| \\
&\le C\left[ \left( \sum_{T \in \mathcal{T}_h} h_T^{-2} \|g_h-g\|_{L^2(T)}^2 \right)^{1/2} + \left( \sum_{T \in \mathcal{T}_h} |g_h-g|_{H^1(T)}^2 \right)^{1/2} \right] \left[ \|\alpha\|_{L^2(\Omega)} + \left( \sum_{T \in \mathcal{T}_h} h_T^2 |\alpha|_{H^1(T)}^2 \right)^{1/2} \right]
\end{split}
\end{equation}
for every $t \in [0,1]$ and every $h$ sufficiently small.
\end{lemma}
\begin{proof}
The first inequality is a direct application of Lemmas 4.11-4.12 of~\cite{gawlik2019high}.  Our situation is identical to the one there, except that we have not assumed that $v$ and $g_h$ are piecewise polynomial, that $\mathcal{T}_h$ is quasi-uniform, nor that $\lim_{h \rightarrow 0} h^{-1}\log h^{-1} \|g_h-g\|_{L^2(\Omega)} \rightarrow 0$.  It is easy to check that these discrepancies are immaterial in the context of~\cite[Lemmas 4.11-4.12]{gawlik2019high}.  Note that the upper bounds in~\cite[Lemmas 4.11-4.12]{gawlik2019high} are written in a slightly simpler form there using the bounds $h_T^{-1} \le Ch^{-1}$ (from quasi-uniformity, which was assumed in~\cite{gawlik2019high} but not here) and $h_T \le h$.

The inequality~(\ref{chbound}) is obtained by replacing all instances of $dv$ by $\alpha$ in the proof of Lemmas 4.11-4.12 in~\cite{gawlik2019high}.
\end{proof}

By combining Lemma~\ref{lemma:bhbound} with the bounds~(\ref{boundedbybh}-\ref{boundedbych}) and the definitions of $\|\cdot\|_{\color{blue}V,h}$, $\|\cdot\|_{\color{blue}V',h}$, $\|\cdot\|_{\color{blue}W,h}$, and $\|\cdot\|_{\color{blue}W',h}$, we arrive at Theorem~\ref{thm:conv}.  Corollary~\ref{cor:conv} then follows from interpolation error estimates for piecewise polynomial Regge metrics~\cite[Theorem 2.5]{li2018regge}.

\section{Piecewise polynomial setting} \label{sec:poly}

So far we have discussed curvature and connections for Regge metrics in the distributional sense.  For practical computing, it is often desirable to work with piecewise polynomial projections of these quantities.  We define such quantities below, and we show that the associated projection operators and linearized differential operators fit nicely into a commutative diagram of differential complexes.

We first define a few finite element spaces.  For each integer $r \ge 0$, let $\mathcal{P}_r(T)$ denote the space of polynomials of degree at most $r$ on a triangle $T$.  Let $\mathcal{P}_r\Lambda^k(T)$ denote the space of $k$-forms on $T$ with coefficients in $\mathcal{P}_r(T)$, and let $\mathcal{P}_r S_2^0(T)$ denote the space of symmetric $(0,2)$-tensor fields on $T$ with coefficients in $\mathcal{P}_r(T)$.  Also let $\mathcal{P}_r^-\Lambda^k(T) = \{\alpha \in \mathcal{P}_r\Lambda^k(T) \mid \operatorname{kos} \alpha \in \mathcal{P}_r\Lambda^{k-1}(T)\}$, where $\operatorname{kos}$ denotes the Koszul differential~\cite[p. 329]{arnold2010finite}. 

With $r \ge 0$ fixed, we define finite element spaces
\begin{align*}
V_h^{r+1} &= \{ v \in V \mid \left.v\right|_T \in \mathcal{P}_{r+1}(T) \, \forall T \in \mathcal{T}_h \}, \\
W_h^{r+1} &= \{\alpha \in W \mid \left.\alpha\right|_T \in \mathcal{P}_{r+1}^-\Lambda^1(T) \, \forall T \in \mathcal{T}_h \}, \\
X_h^r &= \{F \in X \mid \left.F\right|_T \in \mathcal{P}_r\Lambda^2(T) \, \forall T \in \mathcal{T}_h \}, \\
\Sigma_h^r &= \{\sigma \in \Sigma \mid \left.\sigma\right|_T \in \mathcal{P}_r S^0_2(T) \, \forall T \in \mathcal{T}_h \}, \\
\mathcal{M}_h^r &= \{g \in \mathcal{M} \mid \left.g\right|_T \in \mathcal{P}_r S^0_2(T) \, \forall T \in \mathcal{T}_h \}.
\end{align*}

On a triangulation of a planar domain, the space $V_h^{r+1}$ is the standard Lagrange finite element space consisting of continuous functions that are piecewise polynomial.  The space $W_h^{r+1}$ is isomorphic (via the identification of one-forms with vector fields) to the space of two-dimensional N\'ed\'elec finite elements of the first kind, whose members have single-valued tangential components along edges.  The space $X_h^r$ is isomorphic (via the identification of two-forms with scalar fields) to the space of discontinuous polynomials.  The space $\Sigma_h^r$ is the space of Regge finite elements: symmetric $(0,2)$-tensor fields that are piecewise polynomial and possess single-valued tangential-tangential components along edges.  The space $\mathcal{M}_h^r$ consists of piecewise polynomial Regge metrics; it is the subset of $\Sigma_h^r$ whose members are positive definite everywhere.

Note that $dV_h^{r+1} \subset W_h^{r+1}$ and $dW_h^{r+1} \subset X_h^r$.  In fact, the complex
\begin{center}
  \begin{tikzcd}
    0 \arrow{r}{} & V_h^{r+1} \arrow{r}{d} & W_h^{r+1} \arrow{r}{d} & X_h^r \arrow{r}{} & 0
    
  \end{tikzcd}
\end{center}
is exact on triangulations of contractible planar domains~\cite[Section 5.5]{arnold2010finite}.  We will not make extensive use of this complex in what follows, except in the {\color{blue}paragraph preceding Section~\ref{sec:complexes}}; our main interest is in a different complex involving $\Sigma_h^r$. But it is worth remarking that a commutative diagram
\[
  \begin{tikzcd}
    0 \arrow{r}{} & V \arrow{r}{d} \arrow[swap]{d}{} & W \arrow{d}{} \arrow{r}{d} & X \arrow{d}{} \arrow{r}{} & 0 \\
     0 \arrow{r}{} & V_h^{r+1} \arrow{r}{d} & W_h^{r+1} \arrow{r}{d} &  X_h^r \arrow{r}{} & 0
  \end{tikzcd}
\]
can be constructed using the canonical interpolation operators of finite element exterior calculus in the vertical arrows above~\cite{arnold2006finite,arnold2010finite}.

The following definitions should be regarded as piecewise polynomial versions of Definitions~\ref{def:distcurv} and~\ref{def:distconn}.

\begin{definition} \label{def:disccurv}
Let $g$ be a Regge metric.  The \emph{discrete Gaussian curvature} of $g$ is the function $\kappa_h(g) \in V_h^{r+1}$ defined by
\begin{equation} \label{disccurv}
\langle \kappa_h(g), v \rangle_{g,\mathcal{S}} = \sum_{T \in \mathcal{T}_h} \langle \kappa_T(g), v \rangle_{g,T} + \sum_{e \in \mathcal{E}_h^0} \langle \llbracket k_e(g) \rrbracket, v \rangle_{g,e} + \sum_{z \in \mathcal{V}_h^0} \Theta_z(g) v(z), \quad \forall v \in V_h^{r+1}.
\end{equation}
Equivalently,
\begin{equation} \label{curvproject}
\langle \kappa_h(g), v \rangle_{g,\mathcal{S}} = \langle \distcurv(g), v \rangle_{V',V}, \quad \forall v \in V_h^{r+1}.
\end{equation}
\end{definition}

{\color{blue}
\begin{definition} \label{def:discconn}
Let $g$ be a Regge metric.  
Let $(e_1,e_2)$ be a $g$-orthonormal frame field that is smooth on each triangle $T \in \mathcal{T}_h$.
The \emph{discrete connection one-form} associated with $(e_1,e_2)$ is the one-form $\Gamma_h(g) \in W_h^{r+1}$ defined by
\[
\langle \Gamma_h(g), \alpha \rangle_{g,\mathcal{S}} = \sum_{T \in \mathcal{T}_h} \langle \star A_T(g), \alpha \rangle_{g,T} - \sum_{e \in \mathcal{E}_h^0} \langle \theta_e(g), \alpha(\tau) \rangle_{g,e}, \quad \forall \alpha \in W_h^{r+1}.
\]
Equivalently,
\begin{equation} \label{connproject}
\langle \Gamma_h(g), \alpha \rangle_{g,\mathcal{S}} = \langle \Gamma_{\rm dist}(g), \alpha \rangle_{W',W}, \quad \forall \alpha \in W_h^{r+1}.
\end{equation}
\end{definition}
}

{\color{dgreen}
  As in Definition~\ref{def:distconn}, the distributional connection requires not just a choice of frame field $(e_1,e_2)$ but also a topologically consistent way of choosing $\theta_e$, which a priori is defined only up to $2\pi$; recall the discussion preceding Definition~\ref{def:distconn}.
}

{\color{blue}
\begin{remark}
Note the distinction between $\distcurv(g)$ and $\kappa_h(g)$: the former is an element of $V'$ whereas the latter is a continuous, piecewise polynomial function.  Similarly, $\Gamma_{\rm dist}(g)$ is an element of $W'$ whereas $\Gamma_h(g)$ is a piecewise polynomial $1$-form.  One can see from~(\ref{curvproject}) that computing $\kappa_h(g)$ from $\distcurv(g)$ involves inverting a mass matrix.  Similarly, one can see from~(\ref{connproject}) that the same is true for computing $\Gamma_h(g)$ from $\Gamma_{\rm dist}(g)$.
\end{remark}
}

\begin{remark}
In view of~(\ref{kdistint}), an equivalent definition of $\kappa_h(g)$ on a planar triangulation is 
\begin{equation} \label{disccurvint}
\langle \kappa_h(g), v \rangle_{g,\mathcal{S}} = \frac{1}{2} \int_0^1 b_h((1-t)\delta + tg; g-\delta, v) \, dt, \quad \forall v \in V_h^{r+1}.
\end{equation}
This is precisely the definition of discrete Gaussian curvature that was proposed in~\cite{gawlik2019high}.  The discovery that~(\ref{disccurvint}) and~(\ref{disccurv}) are equivalent is one of the main contributions of the present paper.
\end{remark}

{\color{blue}
The following are immediate consequences of~(\ref{curvproject}),~(\ref{connproject}), and Propositions~\ref{prop:ddistconn},~\ref{prop:framechange}, and~\ref{prop:distconndot}.
\begin{proposition} \label{prop:ddiscconn}
The discrete exterior coderivative of $\Gamma_h(g)$ is $-\kappa_h(g)$.  That is,
\[
\langle \Gamma_h(g), dv \rangle_{g,\mathcal{S}} = -\langle \kappa_h(g), v \rangle_{g,\mathcal{S}}, \quad \forall v \in V_h^{r+1}.
\]
\end{proposition}
\begin{proposition} \label{prop:discframechange}
{\color{dgreen}Let $g$ be a Regge metric, and let $(\bar{e}_1,\bar{e}_2)$ be a piecewise smooth $g$-orthonormal frame field with discrete connection one-form $\bar{\Gamma}_h(g) \in W_h^{r+1}$. Let $\psi$ be a piecewise smooth scalar field (not necessarily continuous), let $(e_1,e_2)$ be the piecewise smooth $g$-orthonormal frame field obtained by rotating $(\bar{e}_1,\bar{e}_2)$ counterclockwise by $\psi$, and let $\Gamma_h(g) \in W_h^{r+1}$ be the corresponding discrete connection one-form.} We have
\[
\langle \Gamma_h(g) - \bar{\Gamma}_h(g), \alpha \rangle_{g,\mathcal{S}} = -\langle \psi_h, d\alpha \rangle_{g,\mathcal{S}}, \quad \forall \alpha \in W_h^{r+1},
\]
where $\psi_h \in X_h^r$ satisfies $\langle \psi_h, F \rangle_{g,\mathcal{S}} = \sum_{T \in \mathcal{T}_h} \int_T \psi F$ for all $F \in X_h^r$.
\end{proposition}
\begin{proposition} \label{prop:discconndot}
Let $g(t)$ be a Regge metric depending smoothly on $t$.   Let $(e_1(t),e_2(t))$ be a $g(t)$-orthonormal frame field that is smooth on each triangle $T \in \mathcal{T}_h$ and depends smoothly on $t$.  Then
\[
\frac{d}{dt} \langle \Gamma_h(g(t)), \alpha \rangle_{g,\mathcal{S}} = -\frac{1}{2} c_h(g(t); \sigma(t), \alpha) - \frac{d}{dt} \langle F_h(t), d\alpha \rangle_{g,\mathcal{S}}, \quad \forall \alpha \in W_h^{r+1},
\]
where $\sigma = \frac{\partial}{\partial t}g$ and $F_h(t) \in X_h^r$ is given by
\[
\langle F_h(t), G \rangle_{g,\mathcal{S}} = \sum_{T \in \mathcal{T}_h} \int_T \widetilde{f}(t) G, \quad \forall G \in X_h^r.
\]
Here, $\widetilde{f}(t) = \int_0^t f(t')\,dt'$, and $f = \frac{1}{2}\sigma(e_1,e_2) - \dot{e}^2(e_1) = -\frac{1}{2}\sigma(e_1,e_2) + \dot{e}^1(e_2)$.
\end{proposition}
}

{\color{blue}
\paragraph{Choosing a frame field.}
Just like in Section~\ref{sec:distconn}, Definition~\ref{def:discconn} assumes that one has selected a $g$-orthonormal frame field $(e_1,e_2)$ in advance.  On a triangulated planar domain, we can construct such a frame field as we did in Section~\ref{sec:distconn}, leading to a \emph{canonical discrete connection one-form} $\Gamma_h(g) \in W_h^{r+1}$ defined by
\begin{equation} \label{canonicaldiscconn}
\langle \Gamma_h(g), \alpha \rangle_{g,\mathcal{S}} = -\frac{1}{2} \int_0^1 c_h\left( (1-t)\delta + tg; g-\delta, \alpha \right) \, dt, \quad \forall \alpha \in W_h^{r+1}.
\end{equation}
This is a piecewise polynomial version of Definition~\ref{def:distconn2}.  Recall the interpretation of this one-form. Among all possible discrete connection one-forms that can be constructed with Definition~\ref{def:discconn}, the canonical discrete connection one-form~(\ref{canonicaldiscconn}) is the one that is associated with a specific frame field constructed as follows: starting from a constant frame field that is orthonormal with respect to the Euclidean metric, we deform the metric and the frame field until the frame field is orthonormal with respect to $g$.  Tracking the evolution of the connection along the way yields~(\ref{canonicaldiscconn}).

An alternative way to (implicitly) single out a frame field is to find a one-form $\Gamma_h \in W_h^{r+1}$ that solves the Hodge--Dirac problem~\cite{leopardi2016abstract}
\begin{align}
\langle \Gamma_h, dv \rangle_{g,\mathcal{S}} &= -\langle \kappa_h(g), v \rangle_{g,\mathcal{S}}, && \forall v \in V_h^{r+1}, \label{hodgedirac1} \\
\langle d\Gamma_h, G \rangle &= 0, && \forall G \in X_h^r. \label{hodgedirac2}
\end{align}
{\color{dgreen} This approach is motivated by the Coulomb gauge condition that chooses a gauge by requiring that the divergence of the connection vector field be zero.} We now show that there exists at least one solution of~(\ref{hodgedirac1}--\ref{hodgedirac2}) that is a valid discrete connection one-form, and this solution is unique if the domain is contractible.  Note that the canonical discrete connection one-form, hereafter denoted $\bar{\Gamma}_h$, satisfies~(\ref{hodgedirac1}) but not necessarily~(\ref{hodgedirac2}).  Setting $F_h := d\bar{\Gamma}_h \in X_h^r$, we can solve the Hodge--Laplace problem
\begin{align*}
\langle \beta_h, \alpha \rangle_{g,\mathcal{S}}  &= \langle \psi_h, d\alpha \rangle_{g,\mathcal{S}}, && \forall \alpha \in W_h^{r+1}, \\
\langle d\beta_h, G \rangle_{g,\mathcal{S}} &= \langle F_h, G \rangle_{g,\mathcal{S}}, && \forall G \in X_h^r, 
\end{align*}
for $(\beta_h,\psi_h) \in W_h^{r+1} \times X_h^r$, and then define $\Gamma_h \in W_h^{r+1}$ by
\begin{equation} \label{discconn_constructed}
\langle \Gamma_h, \alpha \rangle_{g,\mathcal{S}} = \langle \bar{\Gamma}_h, \alpha \rangle_{g,\mathcal{S}} - \langle \psi_h, d\alpha \rangle_{g,\mathcal{S}}, \quad \forall \alpha \in W_h^{r+1}.
\end{equation}
By construction, $\Gamma_h$ satisfies~(\ref{hodgedirac1}--\ref{hodgedirac2}).  {\color{dgreen} To see that $\Gamma_h$ yields a valid discrete connection one-form, we must show that it is associated with some $g$-orthonormal frame field $(e_1,e_2)$.  In view of Proposition~\ref{prop:discframechange}, we see that $\Gamma_h$ is the discrete connection one-form associated with a frame field $(e_1,e_2)$ that is rotated by $\star \psi_h$ relative to $(\bar{e}_1,\bar{e}_2)$, the frame field associated with $\bar{\Gamma}_h$.}
Note that this one-form $\Gamma_h$ is uniquely determined by~(\ref{hodgedirac1}-\ref{hodgedirac2}) when the domain is contractible, because then there are no discrete harmonic one-forms.  On non-contractible domains, equations~(\ref{hodgedirac1}-\ref{hodgedirac2}) only determine $\Gamma_h$ up to the addition of a discrete harmonic one-form, and not all solutions of~(\ref{hodgedirac1}-\ref{hodgedirac2}) are valid discrete connection one-forms.  A sufficient condition ensuring $\Gamma_h$'s validity is that~(\ref{discconn_constructed}) holds for some $\psi_h \in X_h^r$, which is equivalent to the condition that the harmonic part of $\Gamma_h$ coincides with the harmonic part of $\bar{\Gamma}_h$.
}

\subsection{Differential complexes} \label{sec:complexes}

The linearization of $\distcurv$ around a Regge metric $g$ is a differential operator {\color{blue}that maps} perturbations of $g$ to elements of $V'$.  This operator, together with the linearization of $\kappa_h$, fits into a commutative diagram of differential complexes which we describe below.  This diagram bears strong similarities to ones studied in~\cite{chen2018multigrid,christiansen2011linearization}; see also~\cite{christiansen2019finite,chen2021finite} for related complexes with higher regularity.

Throughout the following discussion, we let $g \in \mathcal{M}$ and $g_h \in \mathcal{M}_h^r$ be fixed Regge metrics.

\paragraph{Differential operators.}
We define operators $\dv_{\rm dist} : W' \rightarrow V'$ and $(\dv S)_{\rm dist} : \Sigma \rightarrow W'$ by
\begin{align*}
\langle \dv_{\rm dist} \alpha, v \rangle_{V',V} &= -\langle \alpha, dv \rangle_{W',W}, \quad \forall \alpha \in W', \, v \in V, \\
\langle (\dv S)_{\rm dist} \sigma, \alpha \rangle_{W',W} &= -c_h(g; \sigma, \alpha), \quad \forall \sigma \in \Sigma, \, \alpha \in W.
\end{align*}
We also define $(\dv \dv S)_{\rm dist} = \dv_{\rm dist} (\dv S)_{\rm dist}$, which is a map from $\Sigma$ to $V'$.  By construction, we have
\begin{equation}\label{eq:divdivSdist}
  \begin{split}
    \langle (\dv \dv S)_{\rm dist} \sigma, v \rangle_{V',V} 
    &= -\langle (\dv S)_{\rm dist} \sigma, dv \rangle_{\color{blue}W',W} \\
    &= c_h(g; \sigma, dv) \\
    &= b_h(g; \sigma, v), \quad \forall \sigma \in \Sigma, \, v \in V,
  \end{split}
\end{equation}
which shows that $(\dv \dv S)_{\rm dist}$ is the linearization of $2\distcurv$ around $g$.

We define analogous operators on the finite element spaces.  Namely, $\dv_h : W_h^{r+1} \rightarrow V_h^{r+1}$ and $(\dv S)_h : \Sigma_h^r \rightarrow W_h^{r+1}$ are defined by 
\begin{align*}
\langle \dv_h \alpha, v \rangle_{g_h,\mathcal{S}} &= -\langle \alpha, dv \rangle_{g_h,\mathcal{S}}, \quad \forall \alpha \in W_h^{r+1}, \, v \in V_h^{r+1}, \\
\langle (\dv S)_h \sigma, \alpha \rangle_{g_h,\mathcal{S}} &= -c_h(g_h; \sigma, \alpha), \quad \forall \sigma \in \Sigma_h^r, \, \alpha \in W_h^{r+1}.
\end{align*}
Denoting $(\dv \dv S)_h = \dv_h (\dv S)_h : \Sigma_h^r \rightarrow V_h^{r+1}$, we have
\begin{align*}
\langle (\dv \dv S)_h \sigma, v \rangle_{g_h,\mathcal{S}} 
&= -\langle (\dv S)_h \sigma, dv \rangle_{g_h,\mathcal{S}}  \\
&= c_h(g_h; \sigma, dv) \\
&= b_h(g_h; \sigma, v), \quad \forall \sigma \in \Sigma_h^r, \, v \in V_h^{r+1},
\end{align*}
so $(\dv \dv S)_h$ is the linearization of $2\kappa_h \omega_h$ around $g_h$, where $\omega_h$ denotes the volume form associated with $g_h$.

\paragraph{Projections.}
We define projectors $\pi_h^V : V' \rightarrow V_h^{r+1}$ and $\pi_h^W : W' \rightarrow W_h^{r+1}$ by
\begin{align*}
\langle \pi_h^V u, v \rangle_{g_h,\mathcal{S}} &= \langle u, v \rangle_{V',V}, \quad \forall u \in V', \, v \in V_h^{r+1}, \\
\langle \pi_h^W\alpha, \beta \rangle_{g_h,\mathcal{S}} &= \langle \alpha, \beta \rangle_{W',W}, \quad \forall \alpha \in W', \, \beta \in W_h^{r+1}.
\end{align*}

To define an interpolation operator onto $\Sigma_h^r$, it will be convenient to fix a piecewise constant Regge metric $\overline{g}_h \in \mathcal{M}_h^0$.  We define $\pi_h^{\Sigma} : \Sigma \rightarrow \Sigma_h^r$ by requiring that for $\sigma \in \Sigma$, the interpolant $\pi_h^{\Sigma}\sigma \in \Sigma_h^r$ satisfies
\begin{align}
\langle \pi_h^{\Sigma} \sigma - \sigma, \rho \rangle_{\overline{g}_h,T} &= 0, \quad \forall \rho \in \mathcal{P}_{r-1}S_2^0(T), \, T \in \mathcal{T}_h, \label{Ihdof1} \\
\langle (\pi_h^{\Sigma} \sigma)(\overline{\tau}_h, \overline{\tau}_h) - \sigma(\overline{\tau}_h, \overline{\tau}_h), v \rangle_{\overline{g}_h,e} &= 0, \quad \forall v \in \mathcal{P}_r(e), \, e \in \mathcal{E}_h, \label{Ihdof2}
\end{align}
where $\overline{\tau}_h$ is the unit tangent with respect $\overline{g}_h$.  This interpolation operator onto $\Sigma_h^r$ was introduced in~\cite[p. 29]{li2018regge}.  If $\mathcal{S}$ triangulates a planar domain and $\overline{g}_h=\delta$, then $\pi_h^{\Sigma}$ is the canonical interpolation operator onto $\Sigma_h^r$ referenced in Corollary~\ref{cor:conv}.  It maps $\sigma$ into $\mathcal{M}_h^r$ if $h$ is sufficiently small and $\sigma$ belongs to $\mathcal{M}$.

Note that~(\ref{Ihdof1}) is equivalent to the condition that
\begin{equation} \label{Ihdof1b} 
\langle \overline{S}_h (\pi_h^{\Sigma} \sigma - \sigma), \rho \rangle_{\overline{g}_h,T} = 0, \quad \forall \rho \in \mathcal{P}_{r-1}S_2^0(T), \, T \in \mathcal{T}_h, 
\end{equation}
where $\overline{S}_h \sigma = \sigma - \overline{g}_h \Tr \sigma$ and the trace is taken with respect to $\overline{g}_h$.  This follows from two observations.  First, $\overline{S}_h$ is an involution that maps $\mathcal{P}_{r-1}S_2^0(T)$ to itself, so it is an automorphism of $\mathcal{P}_{r-1}S_2^0(T)$.  Second, we have $ \langle \sigma, \overline{S}_h \rho \rangle_{\overline{g}_h,T} = \langle \overline{S}_h \sigma, \rho \rangle_{\overline{g}_h,T}$ for all $\rho,\sigma \in \Sigma$.

In view of~(\ref{Ihdof2}),~(\ref{Ihdof1b}) and the definition~(\ref{bh}) of $b_h$, we have
\begin{equation}
b_h(\overline{g}_h; \pi_h^{\Sigma} \sigma-\sigma, v) = 0, \quad \forall \sigma \in \Sigma, \, v \in V_h^{r+1}.
\end{equation}

\paragraph{Additional definitions on planar domains.}
In the event that $\mathcal{S}$ triangulates a planar domain $\Omega$ and $g$ is smooth, we also introduce additional spaces and operators.  We define
\begin{align*}
U &= \{u \in H^1(\Omega) \otimes \mathbb{R}^2 \mid \left.u\right|_T \in  H^2(T) \otimes \mathbb{R}^2, \, \forall T \in \mathcal{T}_h \}, \\
U_h^{r+1} &= \{u \in U \mid \left.u\right|_T \in \mathcal{P}_{r+1}(T) \otimes \mathbb{R}^2, \, \forall T \in \mathcal{T}_h \}.
\end{align*}
We let $\df : U \rightarrow \Sigma$ be the differential operator
\[
\df u = \frac{1}{2} \mathcal{L}_u g.
\]
In coordinates~\cite[p. 12]{chow2006hamilton},
\[
(\df u)_{ij} = \frac{1}{2} \left(  (\nabla_i u)_j + (\nabla_j u)_i \right).
\]
Note that if $u \in U$, then our assumption that $g$ is smooth ensures that $\df u$ is well-defined, and it belongs to $\Sigma$ for the following reason.  On any edge $e$ shared by two triangles, the trace of $\nabla_{\tau} u$ is well-defined and single-valued, so $(\df u)(\tau,\tau) = g(\nabla_{\tau} u,\tau)$ is as well.

For a fixed $\overline{g}_h \in \mathcal{M}_h^0$, we define an interpolation operator $\pi_h^U : U \rightarrow U_h^{r+1}$ by requiring that for $u \in U$, the interpolant $\pi_h^U u \in U_h^{r+1}$ satisfies
\begin{align*}
\langle \pi_h^U u - u, v \rangle_{\overline{g}_h,T} &= 0, \quad \forall v \in \mathcal{P}_{r-2}(T) \otimes \mathbb{R}^2, \, T \in \mathcal{T}_h, \\
\langle \pi_h^U u - u, v \rangle_{\overline{g}_h, e} &= 0, \quad \forall v \in \mathcal{P}_{r-1}(e) \otimes \mathbb{R}^2, \, e \in \mathcal{E}_h, \\
\pi_h^U u(z) - u(z) &= 0, \quad \forall z \in \mathcal{V}_h.
\end{align*}
This is the standard Lagrange interpolation operator for continuous, piecewise polynomial vector fields of degree $r+1$.

\begin{theorem}
Let $g \in \mathcal{M}$, $g_h \in \mathcal{M}_h^r$, and $\overline{g}_h \in \mathcal{M}_h^0$.  The following statements hold:
\begin{enumerate}
\item The diagram
\[
  \begin{tikzcd}
    W' \arrow{r}{\dv_{\rm dist}} \arrow[swap]{d}{\pi_h^W} & V' \arrow{d}{\pi_h^V} \\
     W_h^{r+1} \arrow{r}{\dv_h} & V_h^{r+1}
  \end{tikzcd}
\]
commutes.
\item If $g=g_h=\overline{g}_h$, then the diagram
\[
  \begin{tikzcd}
    \Sigma \arrow{r}{(\dv\dv S)_{\rm dist}} \arrow[swap]{d}{\pi_h^{\Sigma}} &[2em] V' \arrow{d}{\pi_h^V} \\
     \Sigma_h^r \arrow{r}{(\dv\dv S)_h} &[2em] V_h^{r+1}
  \end{tikzcd}
\]
commutes.
\item If $\mathcal{S}$ is planar and $g=g_h=\overline{g}_h=\delta$, then the diagram
\[
  \begin{tikzcd}
     U \arrow{r}{\df} \arrow[swap]{d}{\pi_h^U} & \Sigma \arrow{d}{\pi_h^{\Sigma}} \\
     U_h^{r+1} \arrow{r}{\df} & \Sigma_h^r
  \end{tikzcd}
\]
commutes.
\end{enumerate}
\end{theorem}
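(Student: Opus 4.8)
The plan is to verify each of the three squares by the same duality argument: evaluate both composite maps on an arbitrary element of the bottom–right finite element space and match the two resulting expressions, using the defining relations of the operators for parts (1) and (2), and unisolvence of the Regge degrees of freedom for part (3).

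For part (1), fix $\alpha\in W'$ and $v\in V_h^{r+1}$. Since $dV_h^{r+1}\subset W_h^{r+1}$, the form $dv$ lies in $W_h^{r+1}$ and may be used as a test form in the defining equations of $\pi_h^W$ and $\dv_h$. Unwinding the definitions of $\pi_h^V$, $\dv_{\rm dist}$, $\dv_h$, and $\pi_h^W$ gives $\langle\pi_h^V\dv_{\rm dist}\alpha,v\rangle_{g_h,\mathcal{S}}=-\langle\alpha,dv\rangle_{W',W}=\langle\dv_h\pi_h^W\alpha,v\rangle_{g_h,\mathcal{S}}$, and since $v$ is arbitrary the square commutes. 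For part (2), fix $\sigma\in\Sigma$ and $v\in V_h^{r+1}$; by~\eqref{eq:divdivSdist} and the definition of $\pi_h^V$ the composite $\pi_h^V\circ(\dv\dv S)_{\rm dist}$ pairs with $v$ to give $b_h(g;\sigma,v)$, whereas $(\dv\dv S)_h\circ\pi_h^\Sigma$ pairs with $v$ to give $b_h(g_h;\pi_h^\Sigma\sigma,v)$. Under the hypothesis $g=g_h=\overline{g}_h$ these two instances of $b_h$ coincide, and the identity $b_h(\overline{g}_h;\pi_h^\Sigma\sigma-\sigma,v)=0$ for all $v\in V_h^{r+1}$ — recorded just before the theorem as a consequence of the Regge degrees of freedom~\eqref{Ihdof2} and~\eqref{Ihdof1b} — yields $b_h(g_h;\pi_h^\Sigma\sigma,v)=b_h(\overline{g}_h;\sigma,v)=b_h(g;\sigma,v)$, so the square commutes.

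For part (3) we have $g=\overline{g}_h=\delta$, so $\df$ is the Euclidean symmetrized gradient $\df w=\tfrac12(\nabla w+(\nabla w)^{\top})$ and every edge of $\mathcal{T}_h$ is a straight segment with constant unit tangent $\tau$ and constant unit normal $n$. First I would note that $\df\pi_h^U u\in\Sigma_h^r$: on each triangle it is a symmetric $(0,2)$-tensor with entries in $\mathcal{P}_r(T)$, and because $\pi_h^U u$ is globally continuous its tangential derivative is single-valued across edges, hence so is $(\df\pi_h^U u)(\tau,\tau)$. By unisolvence of the Regge degrees of freedom~\cite{li2018regge} it then suffices to check that $\df\pi_h^U u$ satisfies the equations~\eqref{Ihdof1}--\eqref{Ihdof2} defining $\pi_h^\Sigma(\df u)$; writing $w=\pi_h^U u-u$, this reduces to $\langle\df w,\rho\rangle_{\delta,T}=0$ for all $\rho\in\mathcal{P}_{r-1}S_2^0(T)$ and $\langle(\df w)(\tau,\tau),v\rangle_{\delta,e}=0$ for all $v\in\mathcal{P}_r(e)$. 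For the first, integration by parts gives $\int_T\langle\df w,\rho\rangle=-\int_T w\cdot(\dv\rho)+\int_{\partial T}(\rho n)\cdot w$, and both terms vanish because $\dv\rho\in\mathcal{P}_{r-2}(T)\otimes\mathbb{R}^2$, $\rho n|_e\in\mathcal{P}_{r-1}(e)\otimes\mathbb{R}^2$ ($n$ being constant on each edge), and $w=\pi_h^U u-u$ has vanishing moments against $\mathcal{P}_{r-2}(T)\otimes\mathbb{R}^2$ on each triangle and against $\mathcal{P}_{r-1}(e)\otimes\mathbb{R}^2$ on each edge. For the second, I would use $(\df w)(\tau,\tau)=\partial_\tau(w\cdot\tau)$ and integrate by parts along $e$; the boundary term vanishes because $w$ vanishes at the vertices of $\mathcal{T}_h$, and the interior term $-\int_e(w\cdot\tau)\,\partial_\tau v\,ds$ vanishes because $\tau\,\partial_\tau v\in\mathcal{P}_{r-1}(e)\otimes\mathbb{R}^2$. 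This establishes the two required degrees of freedom, hence $\df\pi_h^U u=\pi_h^\Sigma\df u$.

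The main obstacle is entirely in part (3): parts (1) and (2) are bookkeeping with the definitions once one recalls the cancellation identity for the Regge interpolant. In part (3) the point is to route the verification through the Regge degrees of freedom and then match them, after integration by parts, with the defining moments of the Lagrange interpolant $\pi_h^U$; in particular the vertex degrees of freedom of $\pi_h^U$ are exactly what is needed to annihilate the boundary term in the edge condition, and the polynomial degrees in the two sets of moments must line up, which they do. The computation itself is short, but it relies essentially on $g=\overline{g}_h=\delta$, which makes $\df$ the symmetric gradient and makes the edges flat so that $n$ and $\tau$ can be pulled out of the boundary integrals.
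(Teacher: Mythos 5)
Your proposal is correct and follows essentially the same route as the paper's proof: parts (1) and (2) unwind the definitions of the projections and discrete operators exactly as the paper does (including the use of $b_h(\overline{g}_h;\pi_h^{\Sigma}\sigma-\sigma,v)=0$), and part (3) performs the same integrations by parts against the Regge degrees of freedom, merely phrased in terms of $w=\pi_h^U u-u$ having vanishing moments rather than matching the moments of $\df u$ and $\df\pi_h^U u$ term by term. The only cosmetic difference is that you make explicit the membership $\df\pi_h^U u\in\Sigma_h^r$ and the appeal to unisolvence, which the paper leaves implicit.
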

\begin{proof}
\begin{enumerate}
\item For any $\alpha \in W'$ and any $v \in V_h^{r+1}$, we have
\begin{align*}
\langle \dv_h \pi_h^W \alpha, v \rangle_{g_h,\mathcal{S}}
&= -\langle \pi_h^W \alpha, dv \rangle_{g_h,\mathcal{S}} \\
&= -\langle \alpha, dv \rangle_{W',W} \\
&= \langle \dv_{\rm dist} \alpha, v \rangle_{V',V} \\
&= \langle \pi_h^V \dv_{\rm dist} \alpha, v \rangle_{g_h,\mathcal{S}},
\end{align*}
so $\dv _h \pi_h^W = \pi_h^V \dv_{\rm dist}$.
\item If $g=g_h = \overline{g}_h$, then for any $\sigma \in \Sigma$ and any $v \in V_h^{r+1}$, we have
\begin{align*}
\langle (\dv\dv S)_h \pi_h^{\Sigma} \sigma, v \rangle_{g_h,\mathcal{S}} 
&= b_h(g_h; \pi_h^{\Sigma} \sigma, v) \\
&= b_h(\overline{g}_h; \pi_h^{\Sigma} \sigma, v) \\
&= b_h(\overline{g}_h; \sigma, v) \\
&= b_h(g; \sigma, v) \\
&= \langle (\dv\dv S)_{\rm dist} \sigma, v \rangle_{V',V} \\
&= \langle \pi_h^V (\dv\dv S)_{\rm dist} \sigma, v \rangle_{g_h,\mathcal{S}},
\end{align*}
so $(\dv\dv S)_h \pi_h^{\Sigma}  = \pi_h^V (\dv\dv S)_{\rm dist}$.
\item If $\mathcal{S}$ is planar and $g=g_h = \overline{g}_h = \delta$, then consider an arbitrary $u \in U$.  On each triangle $T \in \mathcal{T}_h$, the definitions of $\pi_h^{\Sigma}$ and $\pi_h^U$ imply that for any $\rho \in \mathcal{P}_{r-1}S^0_2(T)$,
\begin{align*}
\langle \pi_h^{\Sigma} \df u, \rho \rangle_{\delta,T}
&= \langle \df u, \rho \rangle_{\delta,T} \\
&= \langle u, \rho n \rangle_{\delta,\partial T} - \langle u, \dv \rho \rangle_{\delta, T} \\
&= \langle \pi_h^U u, \rho n \rangle_{\delta,\partial T} - \langle \pi_h^U u, \dv \rho \rangle_{\delta, T} \\
&= \langle \df \pi_h^U u, \rho \rangle_{\delta,T}.
\end{align*}
On each edge $e \in \mathcal{E}_h$, the definitions of $\pi_h^{\Sigma}$ and $\pi_h^U$ imply that for any $v \in \mathcal{P}_r(e)$,
\begin{align*}
\langle (\pi_h^{\Sigma} \df u)(\tau,\tau), v \rangle_{\delta,e}
&= \langle (\df u)(\tau,\tau), v \rangle_{\delta,e} \\
&= \langle \nabla_{\tau} u, \tau v \rangle_{\delta,e} \\
&= \delta(u(z^{(2)}),\tau)v - \delta(u(z^{(1)}),\tau)v - \langle u, \tau  \nabla_{\tau} v \rangle_{\delta,e} \\
&= \delta(\pi_h^U u(z^{(2)}),\tau)v - \delta(\pi_h^U u(z^{(1)}),\tau)v - \langle \pi_h^U u, \tau  \nabla_{\tau} v \rangle_{\delta,e} \\
&= \langle (\df \pi_h^U u)(\tau,\tau), v \rangle_{\delta,e},
\end{align*}
where $z^{(1)},z^{(2)}$ are the two endpoints of $e$.  It follows that $\pi_h^{\Sigma} \df u = \df \pi_h^U u$.
\end{enumerate}
\end{proof}

When $\mathcal{S}$ is planar and $g=g_h=\overline{g}_h=\delta$, the theorem above can be summarized by saying that the diagram
\begin{center}
  \begin{tikzcd}
    RM \arrow{r}{\subset} & U \arrow{r}{\df} \arrow{d}{\pi_h^U} & \Sigma \arrow{d}{\pi_h^{\Sigma}} \arrow{r}{(\dv S)_{\rm dist}}  & W' \arrow[dotted]{d}{\pi_h^W} \arrow{r}{(\dv)_{\rm dist}}  & V' \arrow{d}{\pi_h^V} \arrow{r}{} & 0 \\
     RM \arrow{r}{\subset} & U_h^{r+1} \arrow{r}{\df} & \Sigma_h^r \arrow{r}{(\dv S)_h} & W_h^{r+1} \arrow{r}{(\dv)_h}  & V_h^{r+1} \arrow{r}{} & 0
  \end{tikzcd}
\end{center}
commutes if the dashed arrow is excluded or if the vertical arrows to its left are excluded.  Here, we introduced the space $RM =\operatorname{ker}(\df)$, which consists of vector fields on $\mathcal{S}$ of the form $u(x^1,x^2) = (a+bx^2,c-bx^1)$, where $a,b,c \in \mathbb{R}$.  The top and bottom rows are both complexes if we exclude the column containing $W'$ and $W_h^{r+1}$:
\begin{equation} \label{diagram}
  \begin{tikzcd}
    RM \arrow{r}{\subset} & U \arrow{r}{\df} \arrow{d}{\pi_h^U} & \Sigma \arrow{d}{\pi_h^{\Sigma}} \arrow{r}{(\dv \dv S)_{\rm dist}}  &[5em] V' \arrow{d}{\pi_h^V} \arrow{r}{} & 0 \\
     RM \arrow{r}{\subset} & U_h^{r+1} \arrow{r}{\df} & \Sigma_h^r \arrow{r}{(\dv \dv S)_h} & V_h^{r+1} \arrow{r}{} & 0
  \end{tikzcd}
\end{equation}
Indeed, we will show below that 
\begin{equation} \label{divdivSdef}
(\dv \dv S)_{\rm dist} \df = 0, \quad \text{ if } g=\delta.
\end{equation}
Since $(\dv \dv S)_h \df \pi_h^U = \pi_h^V (\dv \dv S)_{\rm dist} \df$, the surjectivity of $\pi_h^U$ implies that 
\begin{equation} \label{divdivSdefh}
\left. (\dv \dv S)_h \df \right|_{U_h^{r+1}} = 0, \quad \text{ if } g=g_h=\overline{g}_h=\delta
\end{equation}
as well.

{\color{blue}
\begin{remark}
The bottom row of~(\ref{diagram}) has a direct correspondence to the complex studied in~\cite[Equation 2.10]{chen2018multigrid}.  The spaces that are labelled $\overline{\boldsymbol{P}}_1(\Omega;\mathbb{R}^2)$, $\mathcal{S}_h$, $\mathcal{V}_h$, and $\mathcal{P}_h$ in~\cite[Equation 2.10]{chen2018multigrid} correspond in our notation to $J(RM)$, $J(U_h^{r+1})=U_h^{r+1}$, $S(\Sigma_h^r)$, and $V_h^{r+1}$, respectively, where $J$ denotes a $90^\circ$ rotation.  Furthermore, their differential operators $\nabla^s \times$ and $(\dv\boldsymbol{\dv})_h$ correspond to our $S\df J^{-1}$ and $(\dv\dv S)_h S^{-1}$, respectively. Thus,~\cite[Equation 2.10]{chen2018multigrid} reads
\begin{center}
  \begin{tikzcd}
    J(RM) \arrow{r}{\subset} & J(U_h^{r+1}) \arrow{r}{S\df J^{-1}} &[1em] S(\Sigma_h^r) \arrow{r}{(\dv\dv S)_h S^{-1}} &[3em] V_h^{r+1} \arrow{r}{} & 0
  \end{tikzcd}
\end{center}
in our notation.  

Because of this correspondence, it follows from~\cite[Lemma 2.6]{chen2018multigrid} that the bottom row of~(\ref{diagram}) is exact on contractible domains.  Exactness of the top row of~(\ref{diagram}) {\color{blue}can be studied using} a similar correspondence, although there is one subtlety: one must use an argument analogous to the one in Appendix~\ref{sec:appendix} in order to account for the fact that our infinite-dimensional spaces have higher elementwise regularity than global regularity.  This argument yields exactness at the positions $U$ and $\Sigma$ in the top row of~(\ref{diagram}).   Exactness at the position $V'$, i.e. surjectivity of $(\dv\dv S)_{\rm dist} : \Sigma \to V'$, does not appear to hold.  For example, on a triangulation consisting of a single triangle $T$, $(\dv\dv S)_{\rm dist}$ is not surjective from $H^1 S_2^0(T)$ to the dual of $H^2(T) \cap H^1_0(T)$.
\end{remark}
}

We emphasize that~(\ref{divdivSdef}) and (\ref{divdivSdefh}) only hold if $g=\delta$ and $g=g_h=\overline{g}_h=\delta$, respectively.  In the non-Euclidean setting, two obstructions emerge.  First, it is not clear how the spaces $U$ and $U_h^{r+1}$ should be defined for non-smooth metrics.  Second, even for smooth $g$, $(\dv \dv S)_{\rm dist} \df \neq 0$ in the presence of curvature.  Instead we have the following identities.

\begin{proposition} \label{lemma:dividistKu}
Let $g$ be a smooth Riemannian metric with Gaussian curvature $\kappa$.  For any $u \in U$, we have
\begin{equation} \label{divdistKu}
(\dv \dv S)_{\rm dist} \df u = \dv_{\rm dist} (\kappa u),
\end{equation}
where we view $\kappa u$ as an element of $W'$ via
\[
\langle \kappa u, \alpha \rangle_{W',W} = \int_\mathcal{S} \kappa \alpha(u) \omega, \quad \forall \alpha \in W.
\]
In particular,~(\ref{divdivSdef}) holds if $\kappa=0$.
\end{proposition}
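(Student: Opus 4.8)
The plan is to unwind the definitions until~\eqref{divdistKu} becomes an identity of bilinear forms in $v$, and then to prove that identity by integrating by parts triangle by triangle.  By~\eqref{eq:divdivSdist} we have $\langle (\dv\dv S)_{\rm dist}\df u, v\rangle_{V',V} = b_h(g;\df u,v)$, while the definitions of $\dv_{\rm dist}$ and of the functional $\kappa u\in W'$ give $\langle \dv_{\rm dist}(\kappa u), v\rangle_{V',V} = -\langle\kappa u, dv\rangle_{W',W} = -\int_{\mathcal S}\kappa\,(\nabla_u v)\,\omega$, using $(dv)(u)=\nabla_u v$.  So it suffices to prove $b_h(g;\df u,v) = -\int_{\mathcal S}\kappa\,(\nabla_u v)\,\omega$ for every $v\in V$; once this is done, the special case $\kappa\equiv 0$ yields~\eqref{divdivSdef}.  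Since $g$ is smooth and $u\in U$, we have $\df u\in\Sigma$, so I would start from the once-integrated-by-parts form~\eqref{bhgradv} of $b_h$, i.e.\ $b_h(g;\df u,v) = \sum_{T}\bigl(-\int_T\langle\dv S(\df u), dv\rangle_g\,\omega + \int_{\partial T}(\df u)(n,\tau)\,\nabla_\tau v\,\ds\bigr)$; this form is available here because it only requires $\df u$ to be piecewise $H^1$.

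The analytic input is the two-dimensional Bochner--Weitzenb\"ock identity for the linearized scalar curvature.  Since $\df u=\tfrac12\mathcal L_u g$, one has $\dv S(\df u)=\tfrac12\bigl(\Delta u^\flat-d\,\dv u+\Ric(u^\flat)\bigr)$, and using $\Ric=\kappa g$ together with $\Delta u^\flat-d\,\dv u=\kappa u^\flat-d^*d u^\flat$ gives
\[
\dv S(\df u)=\kappa\,u^\flat+\tfrac12\star d\rho,\qquad \rho:=\star\,d u^\flat .
\]
Substituting this into the bulk integrals in~\eqref{bhgradv} and simplifying with the one-form identities $\langle\alpha,\beta\rangle_g\,\omega=\alpha\wedge\star\beta$ and $(\star\alpha)\wedge(\star\beta)=\alpha\wedge\beta$, together with $d\rho\wedge dv=d(\rho\,dv)$, I get $\langle\dv S(\df u),dv\rangle_g\,\omega=\kappa\,(\nabla_u v)\,\omega+\tfrac12 d(\rho\,dv)$, so Stokes' theorem on each triangle yields
\[
b_h(g;\df u,v)=-\int_{\mathcal S}\kappa\,(\nabla_u v)\,\omega+\sum_{T}\int_{\partial T}\Bigl((\df u)(n,\tau)-\tfrac12\rho\Bigr)\nabla_\tau v\,\ds .
\]

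The crux, which I expect to be the main obstacle, is to show the boundary sum vanishes.  Boundary edges contribute nothing since $v$, and hence $\nabla_\tau v$, vanishes on $\partial\mathcal S$.  For an interior edge $e=T_1\cap T_2$, let $\nu,\sigma$ be the unit normal and tangent of $e$ as seen from $T_1$; as seen from $T_2$ they are $-\nu,-\sigma$, so $\nabla_\tau v$ changes sign between the two sides while $(\df u)(n,\tau)=(\df u)(\nu,\sigma)$ does not, and the two contributions of $e$ add up to $\int_e\bigl((\df u)(\nu,\sigma)-\tfrac12\rho\bigr)\big|^{T_1}_{T_2}\,\nabla_\sigma v\,\ds$, the superscript/subscript denoting the difference of the two one-sided traces.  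Because $u$ is continuous across $e$, its tangential derivative $\nabla_\sigma u$ along $e$ is single-valued, so all the discontinuity of $\nabla u$ across $e$ is carried by $w:=(\nabla_\nu u)\big|^{T_1}_{T_2}$; then, using $\rho=g(\nabla_\nu u,\sigma)-g(\nabla_\sigma u,\nu)$ and $(\df u)(\nu,\sigma)=\tfrac12\bigl(g(\nabla_\nu u,\sigma)+g(\nabla_\sigma u,\nu)\bigr)$, one finds that across $e$ the function $\rho$ jumps by $g(w,\sigma)$ while $(\df u)(\nu,\sigma)$ jumps by $\tfrac12 g(w,\sigma)$, so $(\df u)(\nu,\sigma)-\tfrac12\rho$ has no jump and the sum vanishes.  (This last jump identity can be checked first for $u$ smooth on each closed triangle and then extended to $U$ by density; note also that passing through $\rho$, rather than integrating by parts a second time, is what keeps the argument within the available $H^2$-per-triangle regularity, since $\dv\dv S(\df u)$ need not be an $L^2$ function.)  Combining the above gives $b_h(g;\df u,v)=-\int_{\mathcal S}\kappa\,(\nabla_u v)\,\omega$, which is~\eqref{divdistKu}.
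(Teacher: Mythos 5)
Your proposal is correct and follows essentially the same route as the paper's proof: both reduce the claim to $b_h(g;\df u,v)=-\int_{\mathcal S}\kappa\,dv(u)\,\omega$ via the once-integrated form~(\ref{bhgradv}), both rest on the Weitzenb\"ock identity $\dv S\df u=\kappa u^\flat-\tfrac12 d^*du^\flat$ (your $\tfrac12\star d\rho$ with $\rho=\star du^\flat$ is the same term), and both kill the interelement boundary sum by observing that the integrand reduces to $g(n,\nabla_\tau u)\,dv(\tau)$, which is single-valued across interior edges. Your explicit jump computation via $w=(\nabla_\nu u)\big|^{T_1}_{T_2}$ is just a more detailed phrasing of the paper's single-valuedness argument.
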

\begin{proof}
We will first show that for any smooth vector field $u$,
\begin{equation} \label{divSdefu}
\dv S \df u = \kappa u^\flat - \frac{1}{2} d^* d u^\flat,
\end{equation}
where $d^*$ denotes the codifferential.  To see this, we compute
\[
\dv S \df u 
= \dv \df u - \dv(g\Tr\df u).
\]
A calculation in geodesic normal coordinates shows that
\[
\dv \df u = \dv \nabla u^\flat + \frac{1}{2}d^* du^\flat.
\]
On the other hand,
\[
\dv(g\Tr\df u) = d \Tr \df u = d \dv u = -dd^* u^\flat.
\]
Hence,
\begin{align*}
\dv S \df u 
&= \dv \nabla u^\flat + \frac{1}{2}d^* du^\flat + dd^* u^\flat \\
&= \dv \nabla u^\flat + (dd^*+d^* d) u^\flat - \frac{1}{2}d^* d u^\flat.
\end{align*}
The first two terms above are the difference between the Bochner and Hodge Laplacians of $u^\flat$.  The Weitzenbock formula gives
\[
\dv \nabla u^\flat + (dd^*+d^* d) u^\flat = \kappa u^\flat,
\]
so~(\ref{divSdefu}) follows.

Now we will consider a vector field $u \in U$ and prove~(\ref{divdistKu}).  
Since $u \big|_T \in H^2(T) \otimes \mathbb{R}^2$ on each $T \in \mathcal{T}_h$, and since the equality~(\ref{divSdefu}) extends to vector fields in $H^2(T) \otimes \mathbb{R}^2$ by density, we have that~(\ref{divSdefu}) holds elementwise.
Observe also that, by \eqref{eq:divdivSdist}, for any $v \in V$, we have
\[
\langle (\dv \dv S)_{\rm dist} \df u, v \rangle_{V',V} = b_h(g; \df u, v).
\]
Using~(\ref{bhgradv}), we see that 
\begin{equation} \label{bhdefu}
b_h(g; \df u, v) = \sum_{T \in \mathcal{T}_h} \left( -\int_T (\dv S \df u)(\nabla v) \, \omega + \int_{\partial T} (\df u)(n,\tau) \nabla_\tau v \, \ds \right).
\end{equation}
Using~(\ref{divSdefu}), the integrals over $T$ can each be rewritten as
\begin{align*}
-\int_T (\dv S \df u)(\nabla v) \, \omega 
&= -\langle \dv S \df u, dv \rangle_{g,T} \\
&= -\langle \kappa u^\flat, dv \rangle_{g,T} + \frac{1}{2} \langle d^* d u^\flat, dv \rangle_{g,T} \\
&= -\int_T \kappa dv(u) \omega -  \frac{1}{2}\int_{\partial T} \star du^\flat \wedge dv,
\end{align*}
where we used Stokes' theorem in the last line.  Inserting this into~(\ref{bhdefu}) and rewriting the second term, we get
\begin{equation} \label{bhdefu2}
b_h(g; \df u, v) = -\langle \kappa u, dv \rangle_{W',W} + \sum_{T \in \mathcal{T}_h} \int_{\partial T} \left( -\frac{1}{2} \star du^\flat \wedge dv + (\df u)(n,\tau) dv(\tau) \, \ds \right).
\end{equation}
One checks that the tangential component of the one-form being integrated above is
\begin{align*}
i_\tau \left( -\frac{1}{2} \star du^\flat \wedge dv + (\df u)(n,\tau) dv(\tau) \, \ds \right) 
&= \left( -\frac{1}{2} \star du^\flat + (\df u)(n,\tau) \right) dv(\tau)  \\
&= g(n,\nabla_\tau u) dv(\tau).
\end{align*}
Since $\nabla_\tau u$ and $dv(\tau)$ are single-valued on edges $e \in \mathcal{E}_h^0$, and since $dv(\tau)$ vanishes on $\partial\mathcal{S}$, the summation in~(\ref{bhdefu2}) vanishes.  We conclude that
\[
\langle (\dv \dv S)_{\rm dist} \df u, v \rangle_{V',V} = b_h(g; \df u, v) = -\langle \kappa u, dv \rangle_{W',W} = \langle \dv_{\rm dist} (\kappa u), v \rangle_{V',V}
\]
for all $v \in V$.
\end{proof}

\begin{remark} \label{remark:divdivSdef}
Proposition~\ref{lemma:dividistKu} implies in particular that for any smooth vector field $u$,
\begin{equation*} 
\dv \dv S \df u = \dv (\kappa u).
\end{equation*}
This can also be seen by considering the evolution of the curvature two-form $\kappa\, \omega$ under metric deformations induced by the flow $\varphi_t : \Omega \rightarrow \Omega$ of the vector field $u : \Omega \rightarrow \mathbb{R}^2$.  Indeed, consider the case where $u$ vanishes on $\partial\Omega$ for simplicity.  Let $g(t) = \varphi_t^* g(0)$ be a smooth family of Riemannian metrics on $\Omega$ obtained by pulling back $g(0)$ by $\varphi_t$.  Denote $\kappa(t)=\kappa(g(t))$ and $\omega(t)=\omega(g(t))$.  Using Proposition~\ref{prop:kappavoldot}, we see that
\begin{align*}
\dv (\kappa(0) u) \omega(0) = \mathcal{L}_u (\kappa(0)\omega(0)) &= \left.\frac{d}{dt}\right|_{t=0} \left( \kappa(t)\omega(t) \right) = \frac{1}{2} \left( \dv \dv S \dot{g}(0)\right) \omega(0) \\ &= \frac{1}{2} \left( \dv \dv S \mathcal{L}_u g(0) \right) \omega(0) 
= \left( \dv \dv S \df u \right) \omega(0),
\end{align*}
where the operators $\dv$, $\df$, and $S$ are taken with respect to $g(0)$.  See also~\cite[p. 13, Equation (1.28)]{chow2006hamilton}.
\end{remark}

\section*{Acknowledgments}
EG was supported by NSF grant DMS-2012427.

\appendix

\section{Appendix} \label{sec:appendix}

Below we verify that the sequence
\begin{center}
  \begin{tikzcd}
    0 \arrow{r}{} & V \arrow{r}{d} & W \arrow{r}{d} & X \arrow{r}{} & 0
  \end{tikzcd}
\end{center}
is exact on triangulations of contractible domains.  We do so by making three observations:
\begin{enumerate}
\item If $v \in V$ and $dv=0$, then clearly $v=0$ by the boundary conditions and the interelement continuity constraints imposed on functions in $V$.
\item The map $d : W \rightarrow X$ is surjective for the following reason.  On each $T \in \mathcal{T}_h$, the map
\[
\dv : H^1_0(T) \otimes \mathbb{R}^2 \rightarrow L^2_{\int=0}(T)
\]
is surjective~\cite[Lemma B.69, p. 492]{ern2004theory}, where $H^1_0(T) = \{f \in H^1(T) \mid f = 0 \text{ on } \partial T \}$ and $L^2_{\int=0}(T) = \{f \in L^2(T) \mid \int_T f \, \omega  = 0 \}$.  By rotating vectors $90^\circ$ and identifying them with one-forms, we see that
\[
d : H^1_0\Lambda^1(T) \rightarrow  L^2_{\int=0}\Lambda^2(T)
\]
is surjective, where $H^1_0\Lambda^1(T)$ denotes the space of one-forms on $T$ with coefficients in $H^1_0(T)$ and $L^2_{\int=0}\Lambda^2(T)$ denotes the space of square-integrable two-forms on $T$ with vanishing integral.  Now let $F \in X$ be arbitrary.  We can write $F = F_0 + F_1$, where $\int_T F_0$ vanishes on each $T \in \mathcal{T}_h$ and $F_1$ is piecewise constant.  The two-form $F_0$ is in the range of $d : W \rightarrow X$, since we can construct $\alpha_0 \in \prod_{T \in \mathcal{T}_h} H^1_0\Lambda^1(T) \subset W$ satisfying $d\alpha_0=F_0$ by above.   The two-form $F_1$ is also in the range of $d : W \rightarrow X$, since $d$ maps the Whitney one-forms {\color{blue}with vanishing trace} surjectively onto the piecewise constant two-forms {\color{blue}with vanishing mean}.  Thus $F$ is in the range of $d : W \rightarrow X$.
\item Now consider a one-form $\alpha \in W$ satisfying $d\alpha=0$.  We will show that there exists $v \in V$ such that $dv=\alpha$.  The canonical Whitney interpolant of $\alpha$, being closed, belongs to the range of $d : V \rightarrow W$; it is the image under $d$ of a continuous, piecewise linear function (a Whitney zero-form).  So it suffices to focus on the case where $\int_e \alpha = 0$ for every $e \in \mathcal{E}_h$.  On each triangle $T \in \mathcal{T}_h$, $\alpha\big|_T$ is a closed one-form belonging to $H^1\Lambda^1(T)$, so we can construct $v_T \in H^2(T)$ such that $dv_T = \alpha\big|_T$~\cite[Theorem 1.1]{costabel2010bogovskiui} and (by adding a suitable constant) $v_T$ vanishes at one of the vertices of $T$.  Since $\int_e dv_T = \int_e \alpha\big|_T = 0$ along each edge $e$ of $T$, $v_T$ in fact vanishes at every vertex of $T$.  On any edge $e$ shared by two triangles $T_1$ and $T_2$, the equality
\[
di_{T_1,e}^* v_{T_1} = i_{T_1,e}^* dv_{T_1} = i_{T_1,e}^* \alpha = i_{T_2,e}^* \alpha = i_{T_2,e}^* dv_{T_2} = di_{T_2,e}^* v_{T_2},
\]
together with the fact that $v_{T_1}$ and $v_{T_2}$ vanish at the endpoints of $e$, ensures that the trace of $v_{T_1}$ agrees with that of $v_{T_2}$ everywhere along $e$.  By similar reasoning, $v$ (the function whose restriction to $T$ is $v_T$ for each $T \in \mathcal{T}_h$) vanishes on edges $e \in \mathcal{E}_h \setminus \mathcal{E}_h^0$.  It follows that $v \in V$ and $\alpha=dv$.
\end{enumerate}

\printbibliography

\end{document}